	\theoremstyle{nonumberplain}
	\newtheorem{proof}{Proof}
\newtheorem{theorem}{Theorem}[section]
\newtheorem{lemma}[theorem]{Lemma}
\newtheorem{proposition}[theorem]{Proposition}
\newtheorem{definition}[theorem]{Definition}
	\newtheorem{remark}[theorem]{Remark}
\newtheorem{example}[theorem]{Example}
\begin{document}
	\title{\bf $n$-cotorsion pairs over formal triangular matrix rings}
	\date{}
	\author{\sffamily Taolue Long$^1$, Xiaoxiang Zhang$^{1,*}$\\
		{\sffamily\small $^1$ School of Mathematics, Southeast University
			Nanjing 210096, P. R. China }
	}
	\renewcommand{\thefootnote}{\fnsymbol{footnote}}
	\footnotetext[1]{Corresponding author.
		
		E-mail:~tllong6688@163.com,~z990303@seu.edu.cn}
	\maketitle
	
	{\noindent\small{\bf Abstract:}
Let $\Lambda=\begin{pmatrix}
    A & 0\\
    U & B
\end{pmatrix}$ be a formal triangular matrix ring where $A,B$ are rings and $U$ is a $(B,A)$-bimodule.
        In this paper, we study some special classes over the formal triangular matrix ring $\Lambda$. Further, using these special classes, we construct a left (resp. right) $n$-cotorsion pair over the formal triangular matrix ring $\Lambda$ from left (resp. right) $n$-cotorsion pairs over $A$ and $B$.
        Finally, we give an example to illustrate our main result.
	}
	
	\vspace{1ex}
	{\noindent\small{\bf Keywords:}
    $n$-cotorsion pair; formal triangular matrix ring; hereditary}
	
	{\noindent\small{\bf 2020 Mathematics Subject Classification:} 16D90; 18G25}
	
	\section{Introduction}
Cotorsion pairs were introduced by Salce in the category of abelian groups in \cite{Salce1979}.
Later, the concept of cotorsion pairs was  generalized to abelian category, triangulated category and exact category (\cite{Gillespie2016,Nakaoka2011,Nakaoka2018}).
Motivated by some properties satisfied by Gorenstein projective and Gorenstein
injective modules over Iwanaga-Gorenstein rings, Huerta, Mendoza and P\'{e}rez \cite{HMP2021} present the concept of left and right $n$-cotorsion pairs in an abelian category.

Let $A$ and $B$ be rings and $U$ be a $(B,A)$-bimodule. The ring $\Lambda=\begin{pmatrix}
    A & 0\\
    U & B
\end{pmatrix}$ is known as a formal triangular
matrix ring with usual matrix addition and multiplication.
Formal triangular matrix rings play an important
role in both the ring theory and the representation theory of algebras.
This kind of rings is frequently employed to construct
significant examples and counterexamples, which make the theory of rings and modules more abundant and concrete.
Consequently, the properties of formal triangular matrix rings, as well as the modules over them, have attracted increasing attention and research interests \cite{JS2006,ARS,RPI1975,Green1982,AK1999,AK2000,KT2017}.
And Mao \cite{Mao2020} studies classical cotorsion pairs over formal triangular matrix rings.
Inspired by his work, we study (hereditary) left and right $n$-cotorsion pairs over formal triangular matrix rings in this paper.
And our main result is as follows:
\begin{theorem} \emph{(Theorem \ref{main result2})}
\noindent$(1)$ Let $(\mathcal{C},\mathcal{D})$ be a hereditary right $n$-cotorsion pair in $A\operatorname{-Mod}$ and $(\mathcal{E},\mathcal{F})$ be a hereditary right $n$-cotorsion pair in $B\operatorname{-Mod}$.
If $\operatorname{Tor}^A_j(U,\mathcal{C})=0$ for $1\leq j\leq n+1$ and $(\mathfrak{P}^\mathcal{C}_\mathcal{E})^\vee_{n-1}$ is
closed under extensions, then $(\mathfrak{P}^\mathcal{C}_\mathcal{E},\mathfrak{A}^\mathcal{D}_\mathcal{F})$ is a hereditary right $n$-cotorsion pair in $\Lambda\operatorname{-Mod}$.

\noindent$(2)$ Let $(\mathcal{C},\mathcal{D})$ be a hereditary left $n$-cotorsion pair in $A\operatorname{-Mod}$ and $(\mathcal{E},\mathcal{F})$ be a hereditary left $n$-cotorsion pair in $B\operatorname{-Mod}$.
If $\operatorname{Ext}_B^j(U,\mathcal{F})=0$ for $1\leq j\leq n+1$ and $(\mathfrak{I}^\mathcal{D}_\mathcal{F})^\wedge_{n-1}$ is closed under extensions, then $(\mathfrak{A}^\mathcal{C}_\mathcal{E},\mathfrak{I}^\mathcal{D}_\mathcal{F})$ is a hereditary left $n$-cotorsion pair in in $\Lambda\operatorname{-Mod}$.
\end{theorem}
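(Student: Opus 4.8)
The plan is to work throughout inside the usual description of $\Lambda\operatorname{-Mod}$ as the category of triples $M=(X,Y,\phi)$ with $X\in A\operatorname{-Mod}$, $Y\in B\operatorname{-Mod}$ and $\phi\colon U\otimes_A X\to Y$ a $B$-homomorphism, using the left-adjoint functors $p_A\colon X\mapsto (X,\,U\otimes_A X,\,1)$ and $q_B\colon Y\mapsto(0,Y,0)$ (adjoint to the two component functors $(-)_1$ and $(-)_2$, hence sending projectives to projectives), and dually the right-adjoint functors $h_B\colon Y\mapsto(\operatorname{Hom}_B(U,Y),Y,\varepsilon)$ and $\mathbb{1}\colon X\mapsto(X,0,0)$ (sending injectives to injectives). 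Recall that $\mathfrak{P}^\mathcal{C}_\mathcal{E}$ is the projective-like class assembled from $p_A(\mathcal{C})$ and $q_B(\mathcal{E})$ — the triples $(X,Y,\phi)$ with $X\in\mathcal{C}$, $\phi$ monic and $\operatorname{coker}\phi\in\mathcal{E}$, equivalently those fitting in a short exact sequence $0\to p_A(X)\to(X,Y,\phi)\to q_B(E)\to 0$ with $X\in\mathcal{C}$ and $E\in\mathcal{E}$ — while $\mathfrak{A}^\mathcal{D}_\mathcal{F}$ consists of the triples with first component in $\mathcal{D}$ and second component in $\mathcal{F}$; the classes $\mathfrak{I}^\mathcal{D}_\mathcal{F}$ and $\mathfrak{A}^\mathcal{C}_\mathcal{E}$ of part $(2)$ are defined dually through $h_B$ and $\mathbb{1}$. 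I treat part $(1)$ in detail; part $(2)$ is obtained by the evident dualization in which $\operatorname{Hom}_B(U,-)$, $h_B$, $\mathbb{1}$ and injective coresolutions replace $U\otimes_A-$, $p_A$, $q_B$ and projective resolutions, and the hypothesis $\operatorname{Ext}^j_B(U,\mathcal{F})=0$ plays exactly the role of $\operatorname{Tor}^A_j(U,\mathcal{C})=0$.

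\emph{Ext-orthogonality.} First I would prove $\operatorname{Ext}^i_\Lambda(\mathfrak{P}^\mathcal{C}_\mathcal{E},\mathfrak{A}^\mathcal{D}_\mathcal{F})=0$ for $1\le i\le n+1$; the extra degree $n+1$ is precisely what makes the resulting pair hereditary. Given $M=(X,Y,\phi)\in\mathfrak{P}^\mathcal{C}_\mathcal{E}$ and $N=(X',Y',\psi)\in\mathfrak{A}^\mathcal{D}_\mathcal{F}$, apply $\operatorname{Hom}_\Lambda(-,N)$ to $0\to p_A(X)\to M\to q_B(\operatorname{coker}\phi)\to 0$. Since $q_B$ is exact and preserves projectives, $\operatorname{Ext}^i_\Lambda(q_B(\operatorname{coker}\phi),N)\cong\operatorname{Ext}^i_B(\operatorname{coker}\phi,Y')$, which vanishes for $1\le i\le n+1$ since $(\mathcal{E},\mathcal{F})$ is a hereditary right $n$-cotorsion pair; and since $\operatorname{Tor}^A_j(U,X)=0$ for $1\le j\le n+1$, applying $p_A$ to a projective resolution $P_\bullet\to X$ over $A$ yields a complex of projective $\Lambda$-modules exact far enough to the right to compute $\operatorname{Ext}^i_\Lambda(p_A(X),-)$ for $i\le n+1$, and using $p_A$ left adjoint to $(-)_1$ this identifies with $\operatorname{Ext}^i_A(X,X')$, again zero by hereditariness of $(\mathcal{C},\mathcal{D})$. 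The long exact sequence then gives $\operatorname{Ext}^i_\Lambda(M,N)=0$ for $1\le i\le n+1$.

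\emph{Special precovers.} For an arbitrary $M=(X,Y,\phi)$ I would produce a short exact sequence $0\to W\to P\to M\to 0$ with $P\in\mathfrak{P}^\mathcal{C}_\mathcal{E}$ and $W\in\mathfrak{A}^\mathcal{D}_\mathcal{F}$. Choose a right $n$-cotorsion approximation $0\to D_X\to C_X\xrightarrow{p}X\to 0$ over $A$ ($C_X\in\mathcal{C}$, $D_X\in\mathcal{D}$) and one $0\to F\to E\xrightarrow{q}\operatorname{coker}\phi\to 0$ over $B$ ($E\in\mathcal{E}$, $F\in\mathcal{F}$), form the pullback $Y_P=Y\times_{\operatorname{coker}\phi}E$ (so $0\to F\to Y_P\to Y\to 0$ and $0\to\operatorname{im}\phi\to Y_P\to E\to 0$), and build $P$ with first component $C_X$, with second component a suitable module built from $Y_P$ and $U\otimes_A C_X$, and with structure map induced by $\phi\circ(1_U\otimes p)$; the natural epimorphism $P\to M$ covering $p$ then has kernel with first component $D_X\in\mathcal{D}$ and second component $F\in\mathcal{F}$, so $W:=\ker(P\to M)\in\mathfrak{A}^\mathcal{D}_\mathcal{F}$. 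The real work — and the main obstacle of the whole proof — is to arrange the second component of $P$ so that $P$ genuinely lies in $\mathfrak{P}^\mathcal{C}_\mathcal{E}$, i.e. so that its structure map is monic with cokernel in $\mathcal{E}$ while the epimorphism onto $M$ is preserved: a naive pullback makes the structure map monic only at the cost of enlarging its cokernel beyond $\mathcal{E}$ and vice versa, and reconciling the two is exactly where $\operatorname{Tor}^A_j(U,\mathcal{C})=0$ (through degree $n+1$) is used a second time, to keep $U\otimes_A(-)$ under control along $0\to D_X\to C_X\to X\to 0$, and where the hypothesis that $(\mathfrak{P}^\mathcal{C}_\mathcal{E})^\vee_{n-1}$ be closed under extensions enters: one first reaches a short exact sequence whose middle term only lies in $(\mathfrak{P}^\mathcal{C}_\mathcal{E})^\vee_{n-1}$, and then the closedness of this class under extensions, together with the degree-$(n+1)$ orthogonality, corrects it to a genuine $\mathfrak{P}^\mathcal{C}_\mathcal{E}$-precover (equivalently, one invokes an induction on $n$ resting on the $n=1$ case).

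Finally, the orthogonality in degrees $1$ through $n$ together with these special precover sequences shows $(\mathfrak{P}^\mathcal{C}_\mathcal{E},\mathfrak{A}^\mathcal{D}_\mathcal{F})$ is a right $n$-cotorsion pair in $\Lambda\operatorname{-Mod}$, and the additional orthogonality in degree $n+1$ upgrades it to a hereditary one; running the same three steps in their dual form — with $h_B$, $\mathbb{1}$, injective coresolutions and $\operatorname{Ext}^j_B(U,\mathcal{F})=0$ in place of $p_A$, $q_B$, projective resolutions and $\operatorname{Tor}^A_j(U,\mathcal{C})=0$, and with $(\mathfrak{I}^\mathcal{D}_\mathcal{F})^\wedge_{n-1}$ closed under extensions in place of $(\mathfrak{P}^\mathcal{C}_\mathcal{E})^\vee_{n-1}$ — proves part $(2)$. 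The delicate point throughout is the interaction of the two one-sided approximations with the structure morphism $\phi$ (or its adjoint) of a general triple: forcing the glued-up module into the projective-like (resp. injective-like) class is exactly what makes the extra homological hypotheses of the statement necessary.
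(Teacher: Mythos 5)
Your Ext-orthogonality step is essentially correct and even streamlines the paper's treatment: by choosing a projective resolution of the first component and exploiting the $\operatorname{Tor}$-vanishing to degree $n+1$, one can identify $\operatorname{Ext}^i_\Lambda$ with $\operatorname{Ext}^i_A$ (resp.\ $\operatorname{Ext}^i_B$) up to degree $n+1$ and read off hereditariness directly, whereas the paper proves hereditariness indirectly by showing $\mathfrak{A}^\mathcal{D}_\mathcal{F}$ is coresolving (Lemma \ref{hereditary eq2}) and then invoking Lemma \ref{hereditary equiv}. Theorem \ref{main result2} itself is proved in the paper in one line as a corollary of Theorem \ref{main result1}, Lemma \ref{hereditary equiv} and Lemma \ref{hereditary eq2}.

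However, your approximation step goes in the wrong direction and therefore would not establish the result. By Definition \ref{n-cotorsion pair}(2c), for $(\mathfrak{P}^\mathcal{C}_\mathcal{E},\mathfrak{A}^\mathcal{D}_\mathcal{F})$ to be a \emph{right} $n$-cotorsion pair you must produce, for every $M$, a short exact sequence $0\to M\to D\to C\to 0$ with $D\in\mathfrak{A}^\mathcal{D}_\mathcal{F}$ and $C\in(\mathfrak{P}^\mathcal{C}_\mathcal{E})^\vee_{n-1}$ — a special $\mathfrak{A}^\mathcal{D}_\mathcal{F}$-\emph{preenvelope}. You instead construct $0\to W\to P\to M\to 0$ with $P\in\mathfrak{P}^\mathcal{C}_\mathcal{E}$, i.e.\ a precover, which is the approximation property of a \emph{left} $n$-cotorsion pair. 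The same inversion appears in your use of the component approximations: you take $0\to D_X\to C_X\to X\to 0$ (left-pair style), whereas the given right $n$-cotorsion pair $(\mathcal{C},\mathcal{D})$ provides $0\to X\to D\to C\to 0$ with $D\in\mathcal{D}$ and $C\in\mathcal{C}^\vee_{n-1}$. The paper's construction (Theorem \ref{main result1}) is structurally different: split $M$ via $0\to\begin{pmatrix}0\\M_2\end{pmatrix}\to M\to\begin{pmatrix}M_1\\0\end{pmatrix}\to 0$; build a special $\mathfrak{A}^\mathcal{D}_\mathcal{F}$-preenvelope of $\begin{pmatrix}M_1\\0\end{pmatrix}$ by first mapping $M_1\hookrightarrow D_1$ in $A$-Mod and then $U\otimes_A C_1\hookrightarrow F_1$ in $B$-Mod, and of $\begin{pmatrix}0\\M_2\end{pmatrix}$ by approximating $M_2$ in $B$-Mod; then glue the two preenvelopes with Mao's horseshoe lemma (Lemma \ref{lemma5.4}(1)), which is exactly where the hypothesis that $\mathfrak{A}^\mathcal{D}_\mathcal{F}$ be coresolving (equivalently, that the component pairs be hereditary, by Lemma \ref{hereditary eq2}) is used. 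Finally, your idea of ``correcting'' a module in $(\mathfrak{P}^\mathcal{C}_\mathcal{E})^\vee_{n-1}$ to one in $\mathfrak{P}^\mathcal{C}_\mathcal{E}$ has no basis; the closure of $(\mathfrak{P}^\mathcal{C}_\mathcal{E})^\vee_{n-1}$ under extensions is used only to certify that the cokernel of the glued preenvelope, sitting in an extension of $\begin{pmatrix}0\\E_2\end{pmatrix}$ by $\begin{pmatrix}C_1\\F_1\end{pmatrix}$, lies in $(\mathfrak{P}^\mathcal{C}_\mathcal{E})^\vee_{n-1}$ as required.
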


This paper is organized as follows.
In Section \ref{Section 2}, we give some terminologies and preliminary
results.
In Subsection \ref{Subsection 3.1}, we study some special classes over formal triangular matrix rings.
In Subsection \ref{Subsection 3.2}, using these special classes, we study left and right $n$-cotorsion pairs over formal triangular matrix rings (see Theorem \ref{main result1} and Thoerem \ref{main result3}).
In Subsection \ref{Subsection 3.3}, we study hereditary left and right $n$-cotorsion pairs over formal triangular matrix rings (see Theorem \ref{main result2}).
Finally, we give an example to illustrate our main result (see Example \ref{example}).

\section{Preliminaries}\label{Section 2}
	Throughout this paper, we always assume $n\geq 1$ and all rings are nonzero associative rings with identity.
	For a ring $R$, we denote by $R$-Mod (Mod-$R$) the category of left  (right) $R$-modules.
    Denote by $R\operatorname{-Proj}$ (resp. $R\operatorname{-Inj}$) the class of projective (resp. injective) left $R$-modules.
	The character module $\operatorname{Hom_{\mathbb{Z}}}(M,\mathbb{Q}/\mathbb{Z})$ of a module $M\in R\operatorname{-Mod}$ is denoted by $M^+$.
	Given a class $\mathcal{C}$ of $R\operatorname{-Mod}$, we denote $\mathcal{C}^{\perp_i}:=\{X\in R\operatorname{-Mod}\ |\ \operatorname{Ext}_R^i(\mathcal{C},X)=0\}$  and ${}^{\perp_i}\mathcal{C}:=\{X\in R\operatorname{-Mod}\ |\ \operatorname{Ext}_R^i(X,\mathcal{C})=0\}$ for $i\geq 1$.
	In addition, we define $\mathcal{C}^{\perp_{[1,m]}}:=\bigcap\limits_{i=1}^m\mathcal{C}^{\perp_i}$ and ${}^{\perp_{[1,m]}}\mathcal{C}:=\bigcap\limits_{i=1}^m{}^{\perp_i}\mathcal{C}$ for $m\geq 1$.
    In the sequel, we sometimes use the symbol $\mathcal{C}^{\perp_{[1,m]}}$ (resp. ${}^{\perp_{[1,m]}}\mathcal{C}$) to replace $\bigcap\limits_{i=1}^m\mathcal{C}^{\perp_i}$ (resp. $\bigcap\limits_{i=1}^m{}^{\perp_i}\mathcal{C}$).

\subsection{Formal triangular matrix rings}
In this paper, $\Lambda=\begin{pmatrix}
		A & 0\\ U & B
	\end{pmatrix}$
always means a formal triangular matrix ring, where $A$ and $B$ are rings and $U$ is a $(B,A)$-bimodule.
By \cite[Theorem 1.5]{Green1982}, the category $\Lambda\operatorname{-Mod}$ of left $\Lambda$-modules is equivalent to the category $\mathrm{H}$ whose objects are triples $M=\begin{pmatrix}
    M_1\\M_2
\end{pmatrix}_{\varphi^M}$, where $M_1\in A\operatorname{-Mod}, M_2\in B\operatorname{-Mod}$ and $\varphi^M:U\otimes_A M_1\to M_2$ is a left $B$-module homomorphism, and whose morphisms from $\begin{pmatrix}
    M_1\\M_2
\end{pmatrix}_{\varphi^M}$ to $\begin{pmatrix}
    N_1\\N_2
\end{pmatrix}_{\varphi^N}$ are pairs $\begin{pmatrix}
    f_1\\f_2
\end{pmatrix}$ such that $f_1\in\operatorname{Hom}_A(M_1,N_1), f_2\in\operatorname{Hom}_B(M_2,N_2)$ satisfying that
the following diagram is commutative.

$$\xymatrix{U\otimes_A M_1\ar[r]^{1\otimes f_1}\ar[d]_{\varphi^M} & U\otimes_A N_1\ar[d]^{\varphi^N}\\
M_2\ar[r]^{f_2} & N_2
}$$
Given a triple $M=\begin{pmatrix}
    M_1\\M_2
\end{pmatrix}_{\varphi^M}$ in $\mathrm{H}$, there exists an adjoint isomorphism $$\operatorname{Hom}_B(U\otimes_A M_1,M_2)\overset{\mathrm{F}}{\cong} \operatorname{Hom}_A(M_1,\operatorname{Hom}_B(U,M_2)).$$
We will denote by $\widetilde{\varphi^M}$ the morphism $\mathrm{F}(\varphi^M)\in \operatorname{Hom}_A(M_1,\operatorname{Hom}_B(U,M_2))$.

Analogously, the category $\operatorname{Mod-}\Lambda$ of right $\Lambda$-modules is equivalent to the category $\mathrm{G}$ whose objects are triples $W=(W_1,W_2)_{\varphi_W}$, where $W_1\in\operatorname{Mod-}A, W_2\in\operatorname{Mod-}B$ and $\varphi_W:W_2\otimes_B U\to W_1$ is a right $A$-module homomorphism, and whose morphisms from $(W_1,W_2)_{\varphi_W}$ to $(Q_1,Q_2)_{\varphi_Q}$ are pairs $(g_1,g_2)$ such that $g_1\in\operatorname{Hom}_A(W_1,Q_1), g_2\in\operatorname{Hom}_B(W_2,Q_2)$
and the following diagram is commutative.
$$\xymatrix{W_1 \ar[r]^{g_1} & Q_1\\
W_2\otimes_B U \ar[r]^{g_2\otimes 1}\ar[u]^{\varphi_W} & Q_2\otimes_B U \ar[u]_{\varphi_Q}
}
$$

In the rest of the paper, we will identify $\Lambda\operatorname{-Mod}$ (resp. $\operatorname{Mod-}\Lambda$) with the category $\mathrm{H}$ (resp. $\mathrm{G}$) and, whenever there is no possible confusion, we shall omit the morphism $\varphi^M$ (resp. $\varphi_W$).
Note that a sequence $0\to\begin{pmatrix}
    X_1\\X_2
\end{pmatrix}_{\varphi^X}\to\begin{pmatrix}
    Y_1\\Y_2
\end{pmatrix}_{\varphi^Y}\to\begin{pmatrix}
    Z_1\\Z_2
\end{pmatrix}_{\varphi^Z}\to 0$ in $\Lambda\operatorname{-Mod}$ is exact if and only if the two sequences $0\to X_1\to Y_1\to Z_1\to 0$ and $0\to X_2\to Y_2\to Z_2\to 0$ are exact.

\begin{lemma}\label{projective and injective}
Let $M=\begin{pmatrix}
    M_1\\M_2
\end{pmatrix}_f$ be a left $\Lambda$-module.

\noindent$(1)$\emph{\cite[Theorem 3.1]{AK2000}} $M$ is a projective left $\Lambda$-module if and only if $M_1$ is a projective left $A$-module, $\operatorname{Coker}f$ is a projective left $B$-module and $f$ is a monomorphism.

\noindent$(2)$\emph{\cite[Proposition 5.1]{AK1999}} and \emph{\cite[p. 956]{JS2006}} $M$ is an injective left $\Lambda$-module if and only if $\operatorname{Ker}\widetilde{f}$ is an injective left $A$-module, $M_2$ is an injective left $B$-module and $\widetilde{f}$ is an epimorphism.
\end{lemma}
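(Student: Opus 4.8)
The plan is to read off the projective and injective objects of $\Lambda\operatorname{-Mod}\simeq\mathrm{H}$ from the standard adjoint functors connecting $\mathrm{H}$ with $A\operatorname{-Mod}$ and $B\operatorname{-Mod}$, using that the two projections $\mathrm{H}\to A\operatorname{-Mod}$, $\begin{pmatrix}M_1\\M_2\end{pmatrix}\mapsto M_1$, and $\mathrm{H}\to B\operatorname{-Mod}$, $\begin{pmatrix}M_1\\M_2\end{pmatrix}\mapsto M_2$, are exact (by the componentwise description of short exact sequences recalled above). First I would introduce $T_A(X)=\begin{pmatrix}X\\U\otimes_A X\end{pmatrix}_{\mathrm{id}}$ and $Z_B(Y)=\begin{pmatrix}0\\Y\end{pmatrix}$, and check directly from the defining commuting square that $\operatorname{Hom}_\Lambda(T_A X,M)\cong\operatorname{Hom}_A(X,M_1)$ and $\operatorname{Hom}_\Lambda(Z_B Y,M)\cong\operatorname{Hom}_B(Y,M_2)$, so that $T_A,Z_B$ are left adjoints of exact functors and hence preserve projectives; dually, $H_B(E)=\begin{pmatrix}\operatorname{Hom}_B(U,E)\\E\end{pmatrix}$ (structure map with $\widetilde{\varphi}=\operatorname{id}$) and $S_A(E')=\begin{pmatrix}E'\\0\end{pmatrix}$ satisfy $\operatorname{Hom}_\Lambda(M,H_B E)\cong\operatorname{Hom}_B(M_2,E)$ and $\operatorname{Hom}_\Lambda(M,S_A E')\cong\operatorname{Hom}_A(M_1,E')$ — these follow from the adjunction isomorphism $\mathrm{F}$ displayed above — so $H_B,S_A$ are right adjoints of exact functors and preserve injectives.

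For $(1)$, the first observation is that as a left $\Lambda$-module $\Lambda\cong T_A(A)\oplus Z_B(B)$, so every projective left $\Lambda$-module is a direct summand of a coproduct of copies of $T_A(A)\oplus Z_B(B)$. Let $\mathcal{P}$ be the class of triples $\begin{pmatrix}M_1\\M_2\end{pmatrix}_f$ with $M_1$ projective, $f$ monic, and $\operatorname{Coker}f$ projective. One checks $T_A(A),Z_B(B)\in\mathcal{P}$, and — using that $-\otimes_A M_1$ and $\operatorname{Coker}$ commute with coproducts and that coproducts and direct summands of monomorphisms of modules are again monomorphisms — that $\mathcal{P}$ is closed under coproducts and direct summands; hence every projective left $\Lambda$-module lies in $\mathcal{P}$. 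Conversely, if $\begin{pmatrix}M_1\\M_2\end{pmatrix}_f\in\mathcal{P}$, then $0\to U\otimes_A M_1\xrightarrow{f}M_2\to\operatorname{Coker}f\to0$ is split exact; choosing a retraction $s$ of $f$ and the induced splitting $M_2\cong(U\otimes_A M_1)\oplus\operatorname{Coker}f$, I would verify that $\bigl(\operatorname{id}_{M_1},\,m\mapsto(s(m),\bar m)\bigr)$ is an isomorphism in $\mathrm{H}$ onto $T_A(M_1)\oplus Z_B(\operatorname{Coker}f)$, which is projective because $M_1,\operatorname{Coker}f$ are projective and $T_A,Z_B$ preserve projectivity.

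Part $(2)$ is dual, and it is cleanest to run it through the transpose $\widetilde{\varphi}$ (so that the diagram in $\mathrm{H}$ becomes the square $M_1\to N_1$, $\widetilde{\varphi^M}\downarrow$, with $\operatorname{Hom}_B(U,f_2)\circ\widetilde{\varphi^M}=\widetilde{\varphi^N}\circ f_1$). If $\widetilde{\varphi^M}$ is epic, $M_2$ injective and $K=\operatorname{Ker}\widetilde{\varphi^M}$ injective, then $0\to K\to M_1\xrightarrow{\widetilde{\varphi^M}}\operatorname{Hom}_B(U,M_2)\to0$ is split exact, and the analogous computation gives an isomorphism $\begin{pmatrix}M_1\\M_2\end{pmatrix}\cong S_A(K)\oplus H_B(M_2)$ in $\mathrm{H}$, which is injective since $S_A,H_B$ preserve injectives. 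Conversely, given an injective $M$, embed $M_1\hookrightarrow E_1$ and $M_2\hookrightarrow E_2$ into injective modules; then $\bigl(M_1\hookrightarrow E_1,\;0\bigr)$ together with $\bigl(\operatorname{Hom}_B(U,M_2\hookrightarrow E_2)\circ\widetilde{\varphi^M},\;M_2\hookrightarrow E_2\bigr)$ assemble into a monomorphism $M\hookrightarrow S_A(E_1)\oplus H_B(E_2)$ (a morphism in $\mathrm{H}$ is monic iff both components are, by componentwise exactness). Since $M$ is injective it is a direct summand of $S_A(E_1)\oplus H_B(E_2)$; the three conditions ``$\operatorname{Ker}\widetilde{\varphi}$ injective, $M_2$ injective, $\widetilde{\varphi}$ epic'' hold for $S_A(E_1)\oplus H_B(E_2)$ and are inherited by direct summands (here one uses that $\widetilde{(\;\cdot\;)}$ respects finite direct sums, by naturality of $\mathrm{F}$), so they hold for $M$.

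\emph{Expected main obstacle.} The only non-bookkeeping point is verifying that the candidate maps $\bigl(\operatorname{id}_{M_1},\text{splitting}\bigr)$ and its injective counterpart really are morphisms in $\mathrm{H}$, i.e.\ that the defining commuting square (respectively its $\mathrm{F}$-transpose) holds — this is exactly where the splitting must be chosen compatibly with the structure map, and where passing through the adjunctions, rather than chasing elements, keeps the argument short. Everything else (the adjunction identities, closure of $\mathcal{P}$ and of the dual class under coproducts/products and summands, and the identification $\Lambda\cong T_A(A)\oplus Z_B(B)$) is routine once these functors are set up.
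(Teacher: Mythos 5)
The paper does not actually prove this lemma: it is stated in Section~2 purely as a citation of \cite[Theorem~3.1]{AK2000} for part~(1) and of \cite[Proposition~5.1]{AK1999} together with \cite[p.~956]{JS2006} for part~(2), so there is no in-paper argument to compare yours against. That said, your blind proof is correct and complete, and it is essentially the standard derivation used in those references and in the Fossum--Griffith--Reiten / Auslander--Reiten--Smal{\o} treatment of triangular matrix rings. The key moves are all present and sound: the two left adjoints $T_A(X)=\bigl(\begin{smallmatrix}X\\U\otimes_AX\end{smallmatrix}\bigr)$, $Z_B(Y)=\bigl(\begin{smallmatrix}0\\Y\end{smallmatrix}\bigr)$ of the exact projection functors (hence projective-preserving), the two right adjoints $S_A(E')=\bigl(\begin{smallmatrix}E'\\0\end{smallmatrix}\bigr)$, $H_B(E)=\bigl(\begin{smallmatrix}\operatorname{Hom}_B(U,E)\\E\end{smallmatrix}\bigr)$ (hence injective-preserving), the identification $\Lambda\cong T_A(A)\oplus Z_B(B)$, closure of the candidate class under coproducts and summands, and the splitting $M\cong T_A(M_1)\oplus Z_B(\operatorname{Coker}f)$ when $\operatorname{Coker}f$ is projective (dually $M\cong S_A(\operatorname{Ker}\widetilde f)\oplus H_B(M_2)$ when $\operatorname{Ker}\widetilde f$ is injective). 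You also correctly flag the one genuinely nontrivial verification, namely that $\bigl(\operatorname{id}_{M_1},\,m\mapsto(s(m),\bar m)\bigr)$ is a morphism in $\mathrm{H}$, which amounts to $\psi\circ f=\iota_1$ and follows from $sf=\operatorname{id}$ and $\overline{f(x)}=0$; and, in the injective direction, that $\widetilde{(\,\cdot\,)}$ respects finite direct sums by naturality of the adjunction $\mathrm{F}$, so the three conditions pass to direct summands. The only cosmetic suggestion is to state explicitly that the componentwise-exactness criterion quoted in the paper is what makes the two projection functors exact, since that is the pivot for the ``adjoints of exact functors'' step.
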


\begin{lemma}\emph{\cite[Proposition 3.6.1]{KT2017}}\label{KT2017}
Let $M=\begin{pmatrix}
    M_1\\M_2
\end{pmatrix}_{\varphi^M}$ be a left $\Lambda$-module and $W=(W_1,W_2)_{\varphi_W}$ be a right $\Lambda$-module.
Then there is an isomorphism of abelian groups
$$W\otimes_\Lambda M\cong ((W_1\otimes_A M_1)\oplus(W_2\otimes_B M_2))/H,$$
where the subgroup $H$ is generated by all elements of the form $(\varphi_W(w_2\otimes u))\otimes x_1-w_2\otimes \varphi^M(u\otimes x_1)$ with $x_1\in M_1,w_2\in W_2,u\in U$.
\end{lemma}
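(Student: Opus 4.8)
The plan is to present $W\otimes_\Lambda M$ by generators and relations and match it with the right-hand side via explicit mutually inverse maps. Let $e_1,e_2\in\Lambda$ be the evident complete set of orthogonal idempotents, so that $e_1+e_2=1$, $e_1e_2=e_2e_1=0$, $e_1\Lambda e_1=A$, $e_2\Lambda e_2=B$, $e_2\Lambda e_1=U$ and $e_1\Lambda e_2=0$. Under the identification of $\Lambda\operatorname{-Mod}$ with $\mathrm{H}$ and of $\operatorname{Mod-}\Lambda$ with $\mathrm{G}$ one has $M_1=e_1M$, $M_2=e_2M$, $W_1=We_1$, $W_2=We_2$, and the structure maps are induced by the $\Lambda$-action, i.e. $\varphi^M(u\otimes x_1)=ux_1$ and $\varphi_W(w_2\otimes u)=w_2u$ for $u\in U$, $x_1\in M_1$, $w_2\in W_2$.

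First I would define $\theta\colon(W_1\otimes_A M_1)\oplus(W_2\otimes_B M_2)\to W\otimes_\Lambda M$ by $\theta(w_1\otimes x_1,w_2\otimes x_2)=w_1\otimes x_1+w_2\otimes x_2$, regarding $W_i$ and $M_i$ as subgroups of $W$ and $M$. This is well defined: $\mathbb{Z}$-biadditivity is clear, and the $A$- and $B$-balancing relations defining $W_1\otimes_A M_1$ and $W_2\otimes_B M_2$ are instances of the $\Lambda$-balancing relation in $W\otimes_\Lambda M$, since $A=e_1\Lambda e_1$ and $B=e_2\Lambda e_2$ act through the $\Lambda$-action. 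Moreover $\theta$ annihilates $H$: for a generator one computes $\theta\big(\varphi_W(w_2\otimes u)\otimes x_1,0\big)-\theta\big(0,w_2\otimes\varphi^M(u\otimes x_1)\big)=(w_2u)\otimes x_1-w_2\otimes(ux_1)=0$ in $W\otimes_\Lambda M$, the last step being $\Lambda$-balancing along $u\in U\subseteq\Lambda$. Hence $\theta$ descends to $\bar\theta\colon\big((W_1\otimes_A M_1)\oplus(W_2\otimes_B M_2)\big)/H\to W\otimes_\Lambda M$.

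Next I would build the inverse $\psi\colon W\otimes_\Lambda M\to\big((W_1\otimes_A M_1)\oplus(W_2\otimes_B M_2)\big)/H$ from the (obviously biadditive) map $(w,x)\mapsto\overline{(we_1\otimes e_1x,\;we_2\otimes e_2x)}$. The only real verification is that this kills the $\Lambda$-balancing relations, i.e. $\psi(w\lambda\otimes x)=\psi(w\otimes\lambda x)$; since $\Lambda=e_1\Lambda e_1\oplus e_2\Lambda e_1\oplus e_2\Lambda e_2$ as an abelian group (the block $e_1\Lambda e_2$ being zero), it suffices to treat $\lambda\in A$, $\lambda\in B$ and $\lambda\in U$ separately. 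The first two cases reduce to the ordinary $A$- and $B$-balancing relations after absorbing the matching idempotent. The case $\lambda=u\in U=e_2\Lambda e_1$ is the heart of the matter: writing $w_2=we_2$ and $x_1=e_1x$, one gets $(wu)e_1=w_2u=\varphi_W(w_2\otimes u)$, $(wu)e_2=0$, $e_2(ux)=ux_1=\varphi^M(u\otimes x_1)$, $e_1(ux)=0$, whence $\psi(wu\otimes x)-\psi(w\otimes ux)=\overline{\big(\varphi_W(w_2\otimes u)\otimes x_1,\,-\,w_2\otimes\varphi^M(u\otimes x_1)\big)}$, which vanishes precisely because $H$ is generated by these elements.

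Finally I would check that $\bar\theta$ and $\psi$ are mutually inverse. In one direction, $\bar\theta(\psi(w\otimes x))=we_1\otimes e_1x+we_2\otimes e_2x=w\otimes e_1x+w\otimes e_2x=w\otimes x$, using $we_i\otimes e_ix=w\otimes e_ix$ (balancing along the idempotent $e_i$) and $e_1+e_2=1$; in the other, $\psi(\theta(w_1\otimes x_1,w_2\otimes x_2))=\overline{(w_1\otimes x_1,0)}+\overline{(0,w_2\otimes x_2)}=\overline{(w_1\otimes x_1,w_2\otimes x_2)}$, since $w_1e_1=w_1$, $w_1e_2=0$, $e_1x_1=x_1$, $e_2x_1=0$ and symmetrically for index $2$. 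I do not anticipate a genuine obstacle: the argument is elementary, and the one spot needing attention is the $U$-balancing check for $\psi$, which is matched exactly by the defining generators of $H$; everything else is bookkeeping with the idempotents $e_1$ and $e_2$.
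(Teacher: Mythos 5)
The paper does not prove this lemma; it cites it directly as Proposition 3.6.1 of Krylov--Tuganbaev, so there is no in-paper proof to compare against. That said, your argument is correct and complete: the Peirce-decomposition approach via the idempotents $e_1,e_2$ is exactly the standard way to prove this, the two maps $\bar\theta$ and $\psi$ are well defined for the reasons you give (in particular, the $U$-balancing relation for $\psi$ is absorbed precisely by the defining generators of $H$, and $\theta$ kills $H$ by $\Lambda$-balancing along $u\in U\subseteq\Lambda$), and the verification that they are mutually inverse is routine. Nothing is missing.
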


	\begin{lemma}\emph{\cite[Lemma 3.2]{Mao2020}}\label{extension formula}
		Let $M=\begin{pmatrix}
			M_1\\ M_2
		\end{pmatrix}_{\varphi^M}$
		and
		$N=\begin{pmatrix}
			N_1\\N_2
		\end{pmatrix}_{\varphi^N}$
		be left $\Lambda$-modules.
		\begin{enumerate}[$(1)$]
			\item $\mathrm{Ext}_{\Lambda}^n(\begin{pmatrix}
				M_1\\ M_2
			\end{pmatrix},\begin{pmatrix}
				N_1\\0
			\end{pmatrix})\cong \mathrm{Ext}_A^n(M_1,N_1)$.
			
			\item $\mathrm{Ext}_{\Lambda}^n(\begin{pmatrix}
				0\\ M_2
			\end{pmatrix},\begin{pmatrix}
				N_1\\N_2
			\end{pmatrix})\cong \mathrm{Ext}_B^n(M_2,N_2)$.
			
			\item If $\mathrm{Tor}^A_i(U,M_1)=0$ for $1\leq i\leq n+1$,
			then $\mathrm{Ext}_{\Lambda}^n(\begin{pmatrix}
				M_1\\ U\otimes_A M_1
			\end{pmatrix},\begin{pmatrix}
				N_1\\N_2
			\end{pmatrix})\cong \mathrm{Ext}_A^n(M_1,N_1)$.
			
			\item If $\mathrm{Ext}^i_B(U,N_2)=0$ for $1\leq i\leq n+1$, then
			$\mathrm{Ext}_{\Lambda}^n(\begin{pmatrix}
				M_1\\ M_2
			\end{pmatrix},\begin{pmatrix}
				\mathrm{Hom}_B(U,N_2)\\N_2
			\end{pmatrix})\cong \mathrm{Ext}_A^n(M_2,N_2)$.
		\end{enumerate}
	\end{lemma}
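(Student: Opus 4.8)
The two parts are proved by parallel (not formally dual) arguments, so I describe $(1)$ in detail and then indicate the changes for $(2)$. Recall from Subsection~\ref{Subsection 3.1} the special classes involved: for $(1)$ one uses the ``projective-type'' class $\mathfrak{P}^{\mathcal{C}}_{\mathcal{E}}$, consisting of the $\Lambda$-modules $\begin{pmatrix}M_1\\M_2\end{pmatrix}_{\varphi^M}$ with $M_1\in\mathcal{C}$, $\varphi^M$ monic and $\operatorname{Coker}\varphi^M\in\mathcal{E}$, together with the ``diagonal'' class $\mathfrak{A}^{\mathcal{D}}_{\mathcal{F}}$, consisting of the $\begin{pmatrix}N_1\\N_2\end{pmatrix}$ with $N_1\in\mathcal{D}$ and $N_2\in\mathcal{F}$; for $(2)$ one uses the diagonal class $\mathfrak{A}^{\mathcal{C}}_{\mathcal{E}}$ together with the ``injective-type'' class $\mathfrak{I}^{\mathcal{D}}_{\mathcal{F}}$, consisting of the $\begin{pmatrix}N_1\\N_2\end{pmatrix}_{\varphi^N}$ with $\operatorname{Ker}\widetilde{\varphi^N}\in\mathcal{D}$, $N_2\in\mathcal{F}$ and $\widetilde{\varphi^N}$ epic. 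The plan for $(1)$ has two layers. First, under the Tor-hypothesis and the extension-closure of $(\mathfrak{P}^{\mathcal{C}}_{\mathcal{E}})^{\vee}_{n-1}$, the pair $(\mathfrak{P}^{\mathcal{C}}_{\mathcal{E}},\mathfrak{A}^{\mathcal{D}}_{\mathcal{F}})$ is a right $n$-cotorsion pair in $\Lambda\operatorname{-Mod}$; this is Theorem~\ref{main result1}, which I take as given. Second, one upgrades ``right $n$-cotorsion pair'' to \emph{hereditary}, which by the characterisation of hereditary $n$-cotorsion pairs amounts to one closure statement: that $\mathfrak{P}^{\mathcal{C}}_{\mathcal{E}}$ contains $\Lambda\operatorname{-Proj}$ and is closed under kernels of epimorphisms (if one instead phrases hereditariness through the right class, the required property---that $\mathfrak{A}^{\mathcal{D}}_{\mathcal{F}}$ be closed under cokernels of monomorphisms---is immediate componentwise from the same property of $\mathcal{D}$ and $\mathcal{F}$). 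Only this second layer uses that the input pairs are themselves hereditary.

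For the closure statement: $\Lambda\operatorname{-Proj}\subseteq\mathfrak{P}^{\mathcal{C}}_{\mathcal{E}}$ is read off Lemma~\ref{projective and injective}$(1)$ once one uses $A\operatorname{-Proj}\subseteq\mathcal{C}$ and $B\operatorname{-Proj}\subseteq\mathcal{E}$. For closure under kernels of epimorphisms, take $0\to K\to P'\to P\to 0$ exact in $\Lambda\operatorname{-Mod}$ with $P',P\in\mathfrak{P}^{\mathcal{C}}_{\mathcal{E}}$ and read it off in each component. Since $P'_1,P_1\in\mathcal{C}$ and $\mathcal{C}$ is closed under kernels of epimorphisms (because $(\mathcal{C},\mathcal{D})$ is hereditary), $K_1\in\mathcal{C}$. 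Applying $U\otimes_A-$ to $0\to K_1\to P'_1\to P_1\to0$ and using $\operatorname{Tor}^A_1(U,P_1)=0$ (the $j=1$ instance of the hypothesis, valid as $P_1\in\mathcal{C}$) yields a short exact sequence $0\to U\otimes_AK_1\to U\otimes_AP'_1\to U\otimes_AP_1\to0$, which maps, via the structure morphisms $\varphi^K,\varphi^{P'},\varphi^P$, onto $0\to K_2\to P'_2\to P_2\to0$. Applying the snake lemma to this commutative diagram, and using that $\varphi^{P'}$ and $\varphi^P$ are monic, gives that $\varphi^K$ is monic and that $0\to\operatorname{Coker}\varphi^K\to\operatorname{Coker}\varphi^{P'}\to\operatorname{Coker}\varphi^P\to0$ is exact; as $\operatorname{Coker}\varphi^{P'},\operatorname{Coker}\varphi^P\in\mathcal{E}$ and $\mathcal{E}$ is closed under kernels of epimorphisms (because $(\mathcal{E},\mathcal{F})$ is hereditary), $\operatorname{Coker}\varphi^K\in\mathcal{E}$. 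Hence $K\in\mathfrak{P}^{\mathcal{C}}_{\mathcal{E}}$. (If ``hereditary'' is instead taken to mean $\operatorname{Ext}^i_\Lambda(\mathfrak{P}^{\mathcal{C}}_{\mathcal{E}},\mathfrak{A}^{\mathcal{D}}_{\mathcal{F}})=0$ for all $i\ge1$, this follows by dimension shifting from the case $i=1$---which holds since the pair is a right $n$-cotorsion pair with $n\ge1$---because $\mathfrak{P}^{\mathcal{C}}_{\mathcal{E}}$ contains the projectives and, by the above, is closed under syzygies.)

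Part $(2)$ is handled in the same spirit, replacing projectives, $\operatorname{Tor}^A(U,-)$, the cokernel-condition and Lemmas~\ref{projective and injective}$(1)$,~\ref{extension formula}$(3)$ by injectives, $\operatorname{Ext}_B(U,-)$, the kernel-condition and Lemmas~\ref{projective and injective}$(2)$,~\ref{extension formula}$(4)$. By Theorem~\ref{main result3}, $(\mathfrak{A}^{\mathcal{C}}_{\mathcal{E}},\mathfrak{I}^{\mathcal{D}}_{\mathcal{F}})$ is a left $n$-cotorsion pair once $\operatorname{Ext}^j_B(U,\mathcal{F})=0$ for $1\le j\le n+1$ and $(\mathfrak{I}^{\mathcal{D}}_{\mathcal{F}})^{\wedge}_{n-1}$ is closed under extensions; for hereditariness one checks that $\mathfrak{I}^{\mathcal{D}}_{\mathcal{F}}$ contains $\Lambda\operatorname{-Inj}$ (via Lemma~\ref{projective and injective}$(2)$, using $A\operatorname{-Inj}\subseteq\mathcal{D}$ and $B\operatorname{-Inj}\subseteq\mathcal{F}$) and is closed under cokernels of monomorphisms. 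For the latter, take $0\to N'\to N''\to L\to0$ with $N',N''\in\mathfrak{I}^{\mathcal{D}}_{\mathcal{F}}$; then $L_2\in\mathcal{F}$ since $\mathcal{F}$ is closed under cokernels of monomorphisms, and applying $\operatorname{Hom}_B(U,-)$ to $0\to N'_2\to N''_2\to L_2\to0$ stays exact on the right because $\operatorname{Ext}^1_B(U,N'_2)=0$ ($N'_2\in\mathcal{F}$). Feeding the resulting commutative diagram---rows $0\to N'_1\to N''_1\to L_1\to0$ and $0\to\operatorname{Hom}_B(U,N'_2)\to\operatorname{Hom}_B(U,N''_2)\to\operatorname{Hom}_B(U,L_2)\to0$, columns the adjoint structure morphisms $\widetilde{\varphi^{N'}},\widetilde{\varphi^{N''}},\widetilde{\varphi^{L}}$ (the first two epic)---into the snake lemma gives that $\widetilde{\varphi^{L}}$ is epic and $0\to\operatorname{Ker}\widetilde{\varphi^{N'}}\to\operatorname{Ker}\widetilde{\varphi^{N''}}\to\operatorname{Ker}\widetilde{\varphi^{L}}\to0$ is exact; since $\mathcal{D}$ is closed under cokernels of monomorphisms, $\operatorname{Ker}\widetilde{\varphi^{L}}\in\mathcal{D}$, so $L\in\mathfrak{I}^{\mathcal{D}}_{\mathcal{F}}$.

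I expect the genuine obstacle to lie in layer one, i.e. in Theorems~\ref{main result1} and~\ref{main result3}, which the present theorem only refines: constructing the approximation sequences of the underlying $n$-cotorsion pair. There one produces a special $\mathfrak{A}^{\mathcal{D}}_{\mathcal{F}}$-preenvelope of a module $\begin{pmatrix}X_1\\X_2\end{pmatrix}_{\varphi^X}$ by first enveloping $X_1$ into $\mathcal{D}$ over $A$ and $X_2$ into $\mathcal{F}$ over $B$, then gluing these into a $\Lambda$-monomorphism and analysing its cokernel; it is precisely to force this cokernel into $\mathfrak{P}^{\mathcal{C}}_{\mathcal{E}}$, rather than merely into the closure $(\mathfrak{P}^{\mathcal{C}}_{\mathcal{E}})^{\vee}_{n-1}$, that the extension-closure hypothesis is imposed, while the vanishing of $\operatorname{Tor}^A_j(U,\mathcal{C})$ for $1\le j\le n+1$ is exactly what Lemma~\ref{extension formula}$(3)$ needs to reduce $\operatorname{Ext}^i_\Lambda(\mathfrak{P}^{\mathcal{C}}_{\mathcal{E}},-)$ to $\operatorname{Ext}^i_A$ for $1\le i\le n$ and so to identify $\mathfrak{A}^{\mathcal{D}}_{\mathcal{F}}$ as $(\mathfrak{P}^{\mathcal{C}}_{\mathcal{E}})^{\perp_{[1,n]}}$. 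The hereditary enhancement above, by contrast, consumes only the degree-one instances of the Tor/Ext-vanishing together with the hereditariness of $(\mathcal{C},\mathcal{D})$ and $(\mathcal{E},\mathcal{F})$, and needs no further approximation arguments.
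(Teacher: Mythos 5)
Your proposal is a proof of the wrong statement. The statement in question is Lemma~\ref{extension formula}, the four isomorphisms of the form $\mathrm{Ext}^n_\Lambda((\begin{smallmatrix}M_1\\M_2\end{smallmatrix}),(\begin{smallmatrix}N_1\\0\end{smallmatrix}))\cong\mathrm{Ext}^n_A(M_1,N_1)$ and its companions, computing Ext groups over the formal triangular matrix ring $\Lambda$ in terms of Ext groups over $A$ and $B$. What you have written instead is an argument for Theorem~\ref{main result2}, the hereditary $n$-cotorsion-pair theorem: your entire proposal concerns the classes $\mathfrak{P}^{\mathcal{C}}_{\mathcal{E}}$, $\mathfrak{A}^{\mathcal{D}}_{\mathcal{F}}$, $\mathfrak{I}^{\mathcal{D}}_{\mathcal{F}}$, the reduction to Theorems~\ref{main result1} and~\ref{main result3}, and a ``hereditary enhancement'' via closure under syzygies --- none of which appears in, or bears on, Lemma~\ref{extension formula}. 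This is not merely a gap; the argument never engages the content of the statement. It also cannot be salvaged by reinterpretation: your own text invokes Lemma~\ref{extension formula}(3) as an input (``the vanishing of $\operatorname{Tor}^A_j(U,\mathcal{C})$ for $1\le j\le n+1$ is exactly what Lemma~\ref{extension formula}(3) needs \dots''), so offering it as a proof of that lemma would be circular.

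For the record, the paper supplies no proof of Lemma~\ref{extension formula}: it is imported verbatim from \cite[Lemma~3.2]{Mao2020}, and the remark that follows it only notes that Mao's hypothesis ``$i\ge 1$'' in parts (3) and (4) may be weakened to ``$1\le i\le n+1$'', an adjustment the authors describe as easy and leave unproved. If you actually wanted to prove the lemma, the standard route is to take a projective resolution of $(\begin{smallmatrix}M_1\\M_2\end{smallmatrix})$ built from the description in Lemma~\ref{projective and injective}(1) (or, dually, an injective coresolution of $(\begin{smallmatrix}N_1\\N_2\end{smallmatrix})$ via Lemma~\ref{projective and injective}(2)) and compare the resulting Hom complexes over $\Lambda$, $A$, and $B$; the Tor- resp.\ Ext-vanishing hypotheses in parts (3) and (4) are precisely what makes the functors $X\mapsto(\begin{smallmatrix}X\\U\otimes_A X\end{smallmatrix})$ and $Y\mapsto(\begin{smallmatrix}\operatorname{Hom}_B(U,Y)\\Y\end{smallmatrix})$ carry a length-$(n+1)$ resolution to an exact complex, so that the induced map on $\operatorname{Ext}^i$ is an isomorphism for $1\le i\le n$.
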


	\begin{remark}
		With respect to Lemma \ref{extension formula}(3) (resp. (4)), Mao uses the condition $\mathrm{Tor}^A_i(U,M_1)=0$ (resp. $\mathrm{Ext}^i_B(U,N_2)=0$) for $i\geq1$ in the original paper \cite[Lemma 3.2]{Mao2020}.
		It is easy to check that the results still hold if the condition ``$i\geq 1$'' is replaced by ``$1\leq i\leq n+1$''.
		So we give a minor adjustment without proof.
	\end{remark}

\subsection{Resolution and coresolution dimensions}

Given a class $\mathcal{C}$ of $R\operatorname{-Mod}$, an object $M\in R\operatorname{-Mod}$ and a nonnegative integer $m\geq 0$,  a $\mathcal{C}$-\emph{resolution} of $M$ of length $m$ is an exact sequence
	$$0\to C_m\to C_{m-1}\to \cdots \to C_1\to C_0\to M\to 0 $$ in $R\operatorname{-Mod}$, where $C_i\in\mathcal{C}$ for every integer $0\leq i\leq m$.
	 The \emph{resolution dimension} of $M$ with respect to $\mathcal{C}$ (or the $\mathcal{C}$-\emph{resolution dimension} of $M$), denoted by $\text{resdim}_\mathcal{C}(M)$, is defined as the smallest nonnegative integer $m$ such that $M$ has a $\mathcal{C}$-resolution of length $m$.
	 If such $m$ does not exist, we set $\text{resdim}_\mathcal{C}(M):=\infty$.
	 Dually, we have the concepts of $\mathcal{C}$-\emph{coresolutions} of $M$ of length $m$ and of \emph{coresolution dimension} of $M$ with respect to $\mathcal{C}$, denoted by $\text{coresdim}_\mathcal{C}(M)$. With respect to these two homological dimensions, we shall frequently consider the following classes of objects in $R\operatorname{-Mod}$:
	 $$\mathcal{C}^\wedge_m:=\{M\in R\operatorname{-Mod}:\text{resdim}_\mathcal{C}(M)\leq m\} \ \text{and} \ \mathcal{C}^\vee_m:=\{M\in R\operatorname{-Mod}:\text{coresdim}_\mathcal{C}(M)\leq m\}.$$

\subsection{Left (right) $n$-cotorsion pairs}
	\begin{definition} \rm \cite[Definition 2.1]{HMP2021}\label{n-cotorsion pair}
		Let $R$ be a ring and $\mathcal{C}$, $\mathcal{D}$ be classes in $R\operatorname{-Mod}$.
		
		\noindent(1) $(\mathcal{C},\mathcal{D})$ is called a \emph{left $n$-cotorsion pair} in $R\operatorname{-Mod}$ if the following conditions are satisfied:
		
		(a) $\mathcal{C}$ is closed under direct summands.
		
		(b) $\operatorname{Ext}^i_R(\mathcal{C},\mathcal{D})=0$ for $1\leq i\leq n$.
		
		(c) For every module $M\in R\operatorname{-Mod}$, there exists a short exact sequence $0\to D\to C \to M\to 0$ where $C\in\mathcal{C}$ and $D\in\mathcal{D}^\wedge_{n-1}$.
		
		\noindent(2) $(\mathcal{C},\mathcal{D})$ is called a \emph{right $n$-cotorsion pair} in $R\operatorname{-Mod}$ if the following conditions are satisfied:
		
		(a) $\mathcal{D}$ is closed under direct summands.
		
		(b) $\operatorname{Ext}^i_R(\mathcal{C},\mathcal{D})=0$ for $1\leq i\leq n$.
		
		(c) For every module $M\in R\operatorname{-Mod}$, there exists a short exact sequence $0\to M\to D \to C\to 0$ where $D\in\mathcal{D}$ and $C\in\mathcal{C}^\vee_{n-1}$.
	\end{definition}
	
	\begin{theorem}\emph{\cite[Theorem 2.7]{HMP2021}}\label{n-cotorsion pair2}
		Let $R$ be a ring and $\mathcal{C}$, $\mathcal{D}$ be classes in $R\operatorname{-Mod}$.
		
\noindent$(1)$ The following are equivalent:
		
		\emph{(a)} $(\mathcal{C},\mathcal{D})$ is a left n-cotorsion pair in $R\operatorname{-Mod}$.
		
		\emph{(b)} $\mathcal{C}=\bigcap\limits^n_{i=1}{}^{\perp_i}\mathcal{D}$ and for any $M\in R\operatorname{-Mod}$ there exists a short exact sequence $0\to D\to C \to M\to 0$ where $C\in\mathcal{C}$ and $D\in\mathcal{D}^\wedge_{n-1}$.
		
		\noindent$(2)$ The following are equivalent:
		
		\emph{(a)} $(\mathcal{C},\mathcal{D})$ is a right n-cotorsion pair in $R\operatorname{-Mod}$.
		
		\emph{(b)} $\mathcal{D}=\bigcap\limits^n_{i=1}\mathcal{C}^{\perp_i}$ and for any $M\in R\operatorname{-Mod}$ there exists a short exact sequence $0\to M\to D \to C\to 0$ where $D\in\mathcal{D}$ and $C\in\mathcal{C}^\vee_{n-1}$.
	\end{theorem}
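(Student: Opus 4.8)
The plan is to prove part (1) in full and to obtain part (2) by the evident dualization — replacing resolutions by coresolutions, $\operatorname{Ext}^i_R(-,\mathcal{D})$ by $\operatorname{Ext}^i_R(\mathcal{C},-)$, and $\mathcal{D}^\wedge_{n-1}$ by $\mathcal{C}^\vee_{n-1}$ — so I concentrate on (1).

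First I would dispose of the implication (b) $\Rightarrow$ (a), which is a matter of unwinding Definition \ref{n-cotorsion pair}(1). Condition (c) of that definition is literally the second clause of (b). Condition (b) of the definition, $\operatorname{Ext}^i_R(\mathcal{C},\mathcal{D})=0$ for $1\le i\le n$, is immediate from $\mathcal{C}=\bigcap_{i=1}^n{}^{\perp_i}\mathcal{D}\subseteq{}^{\perp_i}\mathcal{D}$. For condition (a), I would note that each class ${}^{\perp_i}\mathcal{D}$ is closed under direct summands, since $\operatorname{Ext}^i_R(X\oplus Y,D)\cong\operatorname{Ext}^i_R(X,D)\oplus\operatorname{Ext}^i_R(Y,D)$, hence so is the finite intersection $\mathcal{C}$.

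The substance lies in (a) $\Rightarrow$ (b). Being a left $n$-cotorsion pair, $(\mathcal{C},\mathcal{D})$ satisfies (a), (b), (c) of Definition \ref{n-cotorsion pair}(1); clause (b) gives $\mathcal{C}\subseteq\bigcap_{i=1}^n{}^{\perp_i}\mathcal{D}$, so only the reverse inclusion remains. I would fix $M\in\bigcap_{i=1}^n{}^{\perp_i}\mathcal{D}$, i.e. $\operatorname{Ext}^i_R(M,\mathcal{D})=0$ for $1\le i\le n$, and invoke clause (c) to obtain a short exact sequence $0\to D\to C\to M\to 0$ with $C\in\mathcal{C}$ and $D\in\mathcal{D}^\wedge_{n-1}$. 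The aim is to show this sequence splits: then $M$ is a direct summand of $C\in\mathcal{C}$, and closure of $\mathcal{C}$ under direct summands (Definition \ref{n-cotorsion pair}(1)(a)) forces $M\in\mathcal{C}$, completing the proof. Splitting holds as soon as $\operatorname{Ext}^1_R(M,D)=0$. To get this, I would establish the dimension-shifting statement: \emph{if $\operatorname{Ext}^i_R(M,\mathcal{D})=0$ for $1\le i\le n$ and $\operatorname{resdim}_{\mathcal{D}}(D')\le m$ with $0\le m\le n-1$, then $\operatorname{Ext}^j_R(M,D')=0$ for $1\le j\le n-m$.} This goes by induction on $m$: the base $m=0$ is the hypothesis; for the step one takes a short exact sequence $0\to D''\to D_0\to D'\to 0$ with $D_0\in\mathcal{D}$ and $\operatorname{resdim}_{\mathcal{D}}(D'')\le m-1$, and reads off the segment $\operatorname{Ext}^j_R(M,D_0)\to\operatorname{Ext}^j_R(M,D')\to\operatorname{Ext}^{j+1}_R(M,D'')$ of the long exact sequence, whose outer terms vanish in the range $1\le j\le n-m$ by the hypothesis and the inductive hypothesis respectively. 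Applying it with $m=n-1$ and $D'=D$ yields $\operatorname{Ext}^1_R(M,D)=0$.

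The main obstacle is getting the bookkeeping in this induction right: the naive statement ``$\operatorname{Ext}^1_R(M,D')=0$ whenever $\operatorname{resdim}_{\mathcal{D}}(D')\le n-1$'' does not self-improve, because the connecting homomorphism pushes the relevant Ext up one degree, so one must carry the sharper range $1\le j\le n-m$ through the recursion. Once that is in place the remainder is routine long-exact-sequence chasing. Part (2) is proved by the same scheme, using the approximation sequence $0\to N\to D\to C\to 0$ with $D\in\mathcal{D}$ and $C\in\mathcal{C}^\vee_{n-1}$, the companion dimension-shifting statement ``$\operatorname{coresdim}_{\mathcal{C}}(C')\le m$ implies $\operatorname{Ext}^j_R(C',N)=0$ for $1\le j\le n-m$'', and closure of $\mathcal{D}=\bigcap_{i=1}^n\mathcal{C}^{\perp_i}$ under direct summands.
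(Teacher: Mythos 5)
Your proof is correct. Note that Theorem \ref{n-cotorsion pair2} is quoted from \cite[Theorem 2.7]{HMP2021} and the present paper supplies no proof of its own, so there is nothing internal to compare against; your argument is the standard one, and your dimension-shifting claim is precisely a pointwise version of Lemma \ref{HMPLemma2.4}(1) (equivalently Proposition \ref{prop2.3}), which the paper already has on hand and which you could have invoked directly to get $\operatorname{Ext}^1_R(M,D)=0$ rather than rederiving it.
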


\begin{proposition}\emph{\cite[Proposition 2.5]{HMP2021}}\label{prop2.3}
Let $R$ be a ring and $\mathcal{C}$, $\mathcal{D}$ be classes in $R\operatorname{-Mod}$.
If $\operatorname{Ext}^i_R(\mathcal{C},\mathcal{D})=0$ for $1\leq i\leq n$, then $\operatorname{Ext}^1_R(\mathcal{C},\mathcal{D}^\wedge_{n-1})=0$ and $\operatorname{Ext}^1_R(\mathcal{C}^\vee_{n-1},\mathcal{D})=0$.
	\end{proposition}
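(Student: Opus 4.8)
The plan is to establish both vanishing statements by the standard syzygy (dimension-shifting) argument, the two halves being formally dual.

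For $\operatorname{Ext}^1_R(\mathcal{C},\mathcal{D}^\wedge_{n-1})=0$, I would fix $C\in\mathcal{C}$ and $M\in\mathcal{D}^\wedge_{n-1}$ and aim to show $\operatorname{Ext}^1_R(C,M)=0$. Choose a $\mathcal{D}$-resolution $0\to D_m\to\cdots\to D_1\to D_0\to M\to 0$ with $m\leq n-1$ and all $D_i\in\mathcal{D}$, and split it into short exact sequences
\[ 0\to L_j\to D_{j-1}\to L_{j-1}\to 0\qquad(1\leq j\leq m), \]
where $L_0:=M$, $L_j:=\operatorname{Ker}(D_{j-1}\to L_{j-1})$, and $L_m=D_m\in\mathcal{D}$. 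Applying $\operatorname{Hom}_R(C,-)$ to the $j$-th sequence and looking at the connecting homomorphism in degree $j$, the hypothesis $\operatorname{Ext}^j_R(C,D_{j-1})=0$ (which holds since $1\leq j\leq m\leq n$) forces the inclusion $\operatorname{Ext}^j_R(C,L_{j-1})\hookrightarrow\operatorname{Ext}^{j+1}_R(C,L_j)$. Composing these for $j=1,\dots,m$ gives
\[ \operatorname{Ext}^1_R(C,M)=\operatorname{Ext}^1_R(C,L_0)\hookrightarrow\operatorname{Ext}^2_R(C,L_1)\hookrightarrow\cdots\hookrightarrow\operatorname{Ext}^{m+1}_R(C,L_m)=\operatorname{Ext}^{m+1}_R(C,D_m), \]
and the right-hand group vanishes by hypothesis because $m+1\leq n$. (If $m=0$, i.e. $M\in\mathcal{D}$, the conclusion is immediate; this also settles the degenerate case $n=1$, where $\mathcal{D}^\wedge_0=\mathcal{D}$.)

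For $\operatorname{Ext}^1_R(\mathcal{C}^\vee_{n-1},\mathcal{D})=0$ I would run the dual argument: fix $D\in\mathcal{D}$ and $N\in\mathcal{C}^\vee_{n-1}$, take a $\mathcal{C}$-coresolution $0\to N\to C^0\to C^1\to\cdots\to C^m\to 0$ with $m\leq n-1$, split it into short exact sequences $0\to N^{j-1}\to C^{j-1}\to N^j\to 0$ with $N^0:=N$ and $N^m=C^m\in\mathcal{C}$, apply $\operatorname{Hom}_R(-,D)$, and use $\operatorname{Ext}^j_R(C^{j-1},D)=0$ for $1\leq j\leq m\leq n$ to chain the inclusions
\[ \operatorname{Ext}^1_R(N,D)\hookrightarrow\operatorname{Ext}^2_R(N^1,D)\hookrightarrow\cdots\hookrightarrow\operatorname{Ext}^{m+1}_R(C^m,D)=0 . \]

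There is no real obstacle here; the point that needs attention is purely the index bookkeeping. One must set up the short exact sequences so that the last syzygy $L_m$ (resp. cosyzygy $N^m$) lands in $\mathcal{D}$ (resp. $\mathcal{C}$) after at most $n-1$ steps, and then verify that every Ext-group invoked along the chain sits in the degree range $[1,n]$ where the hypothesis $\operatorname{Ext}^i_R(\mathcal{C},\mathcal{D})=0$ can be applied — the crucial inequality being $m+1\leq n$ for the final term. Once this is in place, the long exact sequences of $\operatorname{Hom}_R(C,-)$ (resp. $\operatorname{Hom}_R(-,D)$) do all the work.
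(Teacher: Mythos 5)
Your argument is correct. The paper itself gives no proof of this proposition — it is quoted verbatim from \cite[Proposition 2.5]{HMP2021} — so there is nothing in the paper to compare against; but your dimension-shifting argument (split the $\mathcal{D}$-resolution of length $m\leq n-1$ into short exact sequences, apply $\operatorname{Hom}_R(C,-)$, use $\operatorname{Ext}^j_R(C,D_{j-1})=0$ to chain the monomorphisms $\operatorname{Ext}^j_R(C,L_{j-1})\hookrightarrow\operatorname{Ext}^{j+1}_R(C,L_j)$ up to $\operatorname{Ext}^{m+1}_R(C,D_m)=0$, and dualize for the cosyzygy half) is exactly the standard proof this statement admits, with the index bound $m+1\leq n$ correctly identified as the crucial point.
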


\subsection{Approximations}
Let $\mathcal{C}$ be a class in $R\operatorname{-Mod}$ and $M\in R\operatorname{-Mod}$.
A $R$-module homomorphism $f:C\to M$ is said to be a $\mathcal{C}$-\emph{precover} of $M$ if $C\in\mathcal{C}$ and if for every $R$-module homomorphism $f':C'\to M$ with $C'\in\mathcal{C}$, there exists a $R$-module homomorphism $h:C'\to C$ such that $f'=fh$.
Furthermore, an epimorphism  $f:C\to M$ with $C\in\mathcal{C}$ is called a \emph{special $\mathcal{C}$-precover} of $M$ if $\operatorname{Ker}f\in\mathcal{C}^{\perp_1}$.
Dually, we have the notions of $\mathcal{C}$-\emph{preenvelopes} and special $\mathcal{C}$-\emph{preenvelopes}.

Recall that a class of left $R$-modules is \emph{resolving} (resp. \emph{coresolving}) \cite{RJ2006} if it is closed under extensions and
kernels of epimorphisms (resp. cokernels of monomorphisms) and contains all projective (resp. injective) left
$R$-modules.

	\begin{lemma}\emph{\cite[Lemma 5.4]{Mao2020}}\label{lemma5.4}
Let $R$ be a ring and $0\to A_1\to A_2\to A_3\to 0$ be an exact sequence in $R$-$\operatorname{Mod}$.
		\begin{enumerate}[$(1)$]
			\item If $\mathcal{C}$ is a coresolving class in $R\operatorname{-Mod}$, $A_1$ and $A_3$ have special $\mathcal{C}$-preenvelopes, then $A_2$ has a special $\mathcal{C}$-preenvelope. In addition, there exists a commutative diagram with exact rows and columns as follows:
			$$\xymatrix{
				& 0\ar[d] & 0\ar[d] & 0\ar[d] & \\
				0 \ar[r] & A_1 \ar[r]\ar[d] & A_2 \ar[r]\ar[d] & A_3\ar[r]\ar[d] & 0\\
				0 \ar[r] & C_1 \ar[r]\ar[d] & C_2 \ar[r]\ar[d] & C_3\ar[r]\ar[d] & 0\\
				0 \ar[r] & B_1 \ar[r]\ar[d] & B_2 \ar[r]\ar[d] & B_3\ar[r]\ar[d] & 0\\
				& 0 & 0 & 0 &
			}$$
			where $C_i\in\mathcal{C}$ and $B_i\in{}^{\bot_1}\mathcal{C}$ for $i=1,2,3$.
			
			\item If $\mathcal{D}$ is a resolving class in $R\operatorname{-Mod}$, $A_1$ and $A_3$ have special $\mathcal{D}$-precovers, then $A_2$ has a special
			$\mathcal{D}$-precover. In addition, there exists  a commutative diagram with exact rows and columns as follows:
			$$\xymatrix{
				& 0\ar[d] & 0\ar[d] & 0\ar[d] & \\
				0 \ar[r] & E_1 \ar[r]\ar[d] & E_2 \ar[r]\ar[d] & E_3\ar[r]\ar[d] & 0\\
				0 \ar[r] & D_1 \ar[r]\ar[d] & D_2 \ar[r]\ar[d] & D_3\ar[r]\ar[d] & 0\\
				0 \ar[r] & A_1 \ar[r]\ar[d] & A_2 \ar[r]\ar[d] & A_3\ar[r]\ar[d] & 0\\
				& 0 & 0 & 0 &
			}$$
			where $D_i\in\mathcal{D}$ and $E_i\in\mathcal{D}^{\bot_1}$ for $i=1,2,3$.
		\end{enumerate}
	\end{lemma}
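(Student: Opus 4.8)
The plan is to prove statement (1) explicitly and to deduce statement (2) by the evident dualization (pushouts $\leftrightarrow$ pullbacks, monomorphisms $\leftrightarrow$ epimorphisms, ${}^{\perp_1}\mathcal{C}\leftrightarrow\mathcal{D}^{\perp_1}$, coresolving $\leftrightarrow$ resolving, special preenvelopes $\leftrightarrow$ special precovers), so I only describe (1) in detail. First I would record one auxiliary observation that is the real engine of the proof: \emph{if $\mathcal{C}$ is coresolving, then ${}^{\perp_1}\mathcal{C}=\bigcap_{i\ge 1}{}^{\perp_i}\mathcal{C}$}; in particular $B\in{}^{\perp_1}\mathcal{C}$ forces $\operatorname{Ext}^i_R(B,C)=0$ for all $C\in\mathcal{C}$ and all $i\ge 1$. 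This is a one-line dimension shift: for $C\in\mathcal{C}$ pick a short exact sequence $0\to C\to E\to C'\to 0$ with $E$ injective; since $\mathcal{C}$ is coresolving, $E,C'\in\mathcal{C}$, and the long exact sequence gives $\operatorname{Ext}^i_R(B,C)\cong\operatorname{Ext}^{i-1}_R(B,C')$ for $i\ge2$, whence induction finishes it. I also use, without any hypothesis, that ${}^{\perp_1}\mathcal{C}$ is closed under extensions (immediate from the long exact sequence in the first variable).

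Now write the two given special $\mathcal{C}$-preenvelopes as short exact sequences $0\to A_1\xrightarrow{\alpha_1}C_1\to B_1\to 0$ and $0\to A_3\xrightarrow{\alpha_3}C_3\to B_3\to 0$ with $C_1,C_3\in\mathcal{C}$ and $B_1,B_3\in{}^{\perp_1}\mathcal{C}$. \textbf{Step 1.} Form the pushout of $A_1\hookrightarrow A_2$ along $\alpha_1$; since both maps are monomorphisms one obtains a module $P$ together with two short exact sequences
$$0\to C_1\to P\xrightarrow{\ q\ }A_3\to 0,\qquad 0\to A_2\xrightarrow{\ \iota\ }P\to B_1\to 0 .$$
\textbf{Step 2.} By the auxiliary observation $\operatorname{Ext}^2_R(B_3,C_1)=0$, so applying $\operatorname{Hom}_R(B_3,-)$ to the first sequence of Step 1 shows that $q_*\colon\operatorname{Ext}^1_R(B_3,P)\to\operatorname{Ext}^1_R(B_3,A_3)$ is an epimorphism. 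Choose a preimage of the class of $0\to A_3\to C_3\to B_3\to 0$: it is represented by a short exact sequence $0\to P\xrightarrow{u}C_2\to B_3\to 0$ whose pushout along $q$ is (equivalent to) that sequence, so there is a pushout square
$$\xymatrix{P\ar[r]^{u}\ar[d]_{q} & C_2\ar[d]\\ A_3\ar[r]^{\alpha_3} & C_3}$$
Because $q$ is epi with kernel isomorphic to $C_1$, the right-hand vertical map $C_2\to C_3$ is epi with kernel isomorphic to $C_1$; hence $0\to C_1\to C_2\to C_3\to 0$ is exact, and $C_2\in\mathcal{C}$ since $\mathcal{C}$ is closed under extensions. \textbf{Step 3.} The composite $A_2\xrightarrow{\iota}P\xrightarrow{u}C_2$ is a monomorphism, and splicing the second sequence of Step 1 with $0\to P\xrightarrow{u}C_2\to B_3\to 0$ yields $0\to A_2\to C_2\to B_2\to 0$ together with $0\to B_1\to B_2\to B_3\to 0$; thus $B_2\in{}^{\perp_1}\mathcal{C}$, and $A_2\to C_2$ is a special $\mathcal{C}$-preenvelope. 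Finally the three columns $0\to A_i\to C_i\to B_i\to 0$ and the three rows (the $A$-row, the $C$-row $0\to C_1\to C_2\to C_3\to 0$, and the $B$-row) assemble into the asserted commutative $3\times 3$ diagram, commutativity of each square being forced by the universal properties of the pushouts of Steps 1--2 and by the snake lemma.

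I expect the only genuinely non-formal point to be Step 2: the naive "horseshoe" choice $C_2=C_1\oplus C_3$ does \emph{not} work, since extending $\alpha_1$ across $A_2$ is obstructed by a class in $\operatorname{Ext}^1_R(A_3,C_1)$ which need not vanish; after replacing $A_2$ by the pushout $P$ the residual obstruction sits in $\operatorname{Ext}^2_R(B_3,C_1)$, and this is precisely where coresolvingness of $\mathcal{C}$ (through the auxiliary observation) is needed. The remaining work — keeping track of which maps are monomorphisms and checking that every square of the final diagram commutes — is routine bookkeeping.

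For statement (2) I would run the dual construction: replace Step 1 by the pullback of $A_2\to A_3$ along the given special $\mathcal{D}$-precover $D_3\to A_3$, producing $Q$ with $0\to E_3\to Q\to A_2\to 0$ and $0\to A_1\to Q\to D_3\to 0$; use that $\mathcal{D}$ resolving gives $\mathcal{D}^{\perp_1}=\bigcap_{i\ge1}\mathcal{D}^{\perp_i}$ (same dimension shift, now with a projective presentation), so that $\operatorname{Ext}^2_R(D_3,E_1)=0$ and the relevant map of $\operatorname{Ext}^1$'s is epi; then dualize Steps 2--3 verbatim to obtain a special $\mathcal{D}$-precover $D_2\to A_2$ with $D_2\in\mathcal{D}$, $E_2\in\mathcal{D}^{\perp_1}$, and the commutative $3\times 3$ diagram with rows $0\to E_i\to D_i\to A_i\to 0$.
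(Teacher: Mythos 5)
Your argument is correct. Note that the paper does not reprove this statement --- it is quoted verbatim from \cite[Lemma 5.4]{Mao2020} --- but your construction (push out $A_1\hookrightarrow A_2$ along the preenvelope $A_1\to C_1$ to get $P$, then lift the extension $0\to A_3\to C_3\to B_3\to 0$ across $q\colon P\twoheadrightarrow A_3$ using that the obstruction lives in $\operatorname{Ext}^2_R(B_3,C_1)$, which vanishes because $\mathcal{C}$ is coresolving and $B_3\in{}^{\perp_1}\mathcal{C}$) is exactly the standard horseshoe-type argument used there, your diagnosis of why the naive choice $C_1\oplus C_3$ fails is on the mark, and the dualization to deduce (2) from (1) is the intended one.
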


	\section{Main results}
\subsection{Special classes over formal triangular matrix rings}\label{Subsection 3.1}
\begin{definition}\rm\cite[P. 3]{Mao2020}\label{construction}
		Let $\mathcal{C}$ be a class in $A$-$\mathrm{Mod}$ and $\mathcal{D}$ be a class  in $B$-$\mathrm{Mod}$.
		Define $$\mathfrak{A}^\mathcal{C}_\mathcal{D}:=\{(\begin{smallmatrix}
			M_1\\M_2
		\end{smallmatrix})_{\varphi}\in \Lambda\operatorname{-Mod}\ |\  M_1\in\mathcal{C} \ \text{and} \ M_2\in\mathcal{D}\},$$
		$$\mathfrak{P}^\mathcal{C}_\mathcal{D}:=\{(\begin{smallmatrix}
			M_1\\M_2
		\end{smallmatrix})_{\varphi}\in \Lambda\operatorname{-Mod}\  |\  M_1\in\mathcal{C},\ \operatorname{Coker}\varphi\in\mathcal{D} \ \text{and} \ \varphi \ \text{is a monomorphism}\},$$
		$$\mathfrak{I}^\mathcal{C}_\mathcal{D}:=\{(\begin{smallmatrix}
			M_1\\M_2
		\end{smallmatrix})_{\varphi}\in \Lambda\operatorname{-Mod}\ |\  \operatorname{Ker}\widetilde{\varphi}\in\mathcal{C},\ M_2\in\mathcal{D} \ \text{and} \ \widetilde{\varphi} \ \text{is an epimorphism}\}.$$
	\end{definition}
		
	\begin{lemma}\emph{\cite[Lemma 2.4]{HMP2021}}\label{HMPLemma2.4}
Let $R$ be a ring and $\mathcal{C}$ be a class of $R$-$\mathrm{Mod}$.
Let $M$ be a left $R$-module and $N$ be a right $R$-module.
		\begin{enumerate}[$(1)$]
			
			\item If $\mathrm{Ext}_R^i(M,\mathcal{C})=0$ for  $1\leq i\leq n$,
			then $\mathrm{Ext}^1_R(M,\mathcal{C}^\wedge_{n-1})=0$.
			
			\item If $\mathrm{Tor}^R_i(N,\mathcal{C})=0$ for $1\leq i\leq n$,
			then $\mathrm{Tor}_1^R(N,\mathcal{C}_{n-1}^{\vee})=0$.
			
		\end{enumerate}
	\end{lemma}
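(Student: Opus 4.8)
The plan is to establish both parts by \emph{dimension shifting}, after strengthening each statement to a form that supports an induction on resolution (resp.\ coresolution) dimension.

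For (1), I would prove the following sharper assertion by induction on $m\ge 0$: if $\operatorname{Ext}^i_R(M,\mathcal{C})=0$ for $1\le i\le n$ and $X\in\mathcal{C}^\wedge_m$, then $\operatorname{Ext}^j_R(M,X)=0$ for every $j$ with $1\le j\le n-m$. The base case $m=0$ is just the hypothesis, since then $X\in\mathcal{C}$. For $m\ge 1$, the definition of $\mathcal{C}^\wedge_m$ provides a short exact sequence $0\to X'\to C_0\to X\to 0$ with $C_0\in\mathcal{C}$ and $\text{resdim}_\mathcal{C}(X')\le m-1$, i.e.\ $X'\in\mathcal{C}^\wedge_{m-1}$. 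Applying $\operatorname{Hom}_R(M,-)$ and looking at the segment
$$\operatorname{Ext}^j_R(M,C_0)\longrightarrow\operatorname{Ext}^j_R(M,X)\longrightarrow\operatorname{Ext}^{j+1}_R(M,X')$$
of the long exact sequence, the left-hand term vanishes by hypothesis (as $1\le j\le n-m\le n$) and the right-hand term vanishes by the induction hypothesis (as $j+1\le n-m+1=n-(m-1)$ and $X'\in\mathcal{C}^\wedge_{m-1}$), whence $\operatorname{Ext}^j_R(M,X)=0$. Setting $m=n-1$ gives $\operatorname{Ext}^1_R(M,X)=0$ for every $X\in\mathcal{C}^\wedge_{n-1}$, which is exactly the claim.

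For (2), I would run the dual argument with the functor $\operatorname{Tor}^R_\bullet(N,-)$ in place of $\operatorname{Ext}^\bullet_R(M,-)$: by induction on $m\ge 0$ I would show that if $\operatorname{Tor}^R_i(N,\mathcal{C})=0$ for $1\le i\le n$ and $Y\in\mathcal{C}^\vee_m$, then $\operatorname{Tor}^R_j(N,Y)=0$ for $1\le j\le n-m$. The base case is again immediate, and for $m\ge 1$ one uses a short exact sequence $0\to Y\to C^0\to Y'\to 0$ with $C^0\in\mathcal{C}$ and $\text{coresdim}_\mathcal{C}(Y')\le m-1$, coming from the definition of $\mathcal{C}^\vee_m$; the relevant segment of the long exact sequence is
$$\operatorname{Tor}^R_{j+1}(N,Y')\longrightarrow\operatorname{Tor}^R_j(N,Y)\longrightarrow\operatorname{Tor}^R_j(N,C^0),$$
and both ends vanish for $1\le j\le n-m$ for the same reasons as above. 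Taking $m=n-1$ yields $\operatorname{Tor}^R_1(N,Y)=0$ for all $Y\in\mathcal{C}^\vee_{n-1}$.

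No real difficulty is anticipated; the only point needing attention is the simultaneous bookkeeping of the homological degree $j$ and the (co)resolution dimension $m$: when one shifts along the short exact sequence, the degree rises by one while the dimension drops by one, so the bound $j\le n-m$ is preserved, and it is precisely this matching that lets the single induction hypothesis cover the shifted term. Everything else reduces to the standard long exact sequences for $\operatorname{Ext}$ and $\operatorname{Tor}$ applied to the defining (co)resolution of an object of $\mathcal{C}^\wedge_{n-1}$ (resp.\ $\mathcal{C}^\vee_{n-1}$).
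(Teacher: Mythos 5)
Your proof is correct and is precisely the standard dimension-shifting induction that underlies the cited result; the paper itself does not display a proof of this lemma, but instead defers to \cite[Lemma 2.4]{HMP2021} and, in the remark that follows, asserts that part (2) is obtained by the analogous (dual) argument, which is exactly what you have written out. The one bookkeeping point, which you handle correctly, is the simultaneous descent in the (co)resolution dimension $m$ and ascent in the homological degree $j$, so that the bound $1\le j\le n-m$ is preserved through the inductive step and yields $\operatorname{Ext}^1$ (resp.\ $\operatorname{Tor}_1$) vanishing at $m=n-1$.
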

	
	\begin{remark}
Lemma \ref{HMPLemma2.4}(1) can be obtained by \cite[Lemma 2.4]{HMP2021} directly. And through the analogous proof process of \cite[Lemma 2.4]{HMP2021}, ones can prove Lemma \ref{HMPLemma2.4}(2).
	\end{remark}

	\begin{lemma}\label{perp formula}
		Let $\mathcal{C},\mathcal{D}$ be classes in $A\operatorname{-Mod}$ and $\mathcal{E},\mathcal{F}$ be classes in $B$-$\mathrm{Mod}$.
		
\noindent$(1)$ If $\operatorname{Tor}^A_j(U,\mathcal{C})=0$ for $1\leq j\leq n+1$, then
		$\bigcap\limits^n_{i=1}(\mathfrak{P}^\mathcal{C}_\mathcal{E})^{\perp_i}=\mathfrak{A}_{\mathcal{E}^{\perp_{[1,n]}}}^{\mathcal{C}^{\perp_{[1,n]}}}$.

\noindent$(2)$ If $B^+\in\mathcal{F}$ and $\operatorname{Tor}_j^A(U,\bigcap\limits^n_{i=1}{}^{\perp_i}\mathcal{D})=0$ for $1\leq j\leq n+1$, then
		$\bigcap\limits^n_{i=1}{}^{\perp_i}(\mathfrak{A}^\mathcal{D}_\mathcal{F})=\mathfrak{P}^{{}^{\perp_{[1,n]}}\mathcal{D}}_{{}^{\perp_{[1,n]}}\mathcal{F}}$.
		
\noindent$(3)$ If $\operatorname{Ext}^j_B(U,\mathcal{F})=0$ for $1\leq j\leq n+1$, then $\bigcap\limits_{i=1}^n{}^{\perp_i}\mathfrak{I}^\mathcal{D}_\mathcal{F}=\mathfrak{A}^{{}^{\perp_{[1,n]}}\mathcal{D}}_{{}^{\perp_{[1,n]}}\mathcal{F}}$.

\noindent$(4)$ If $A\in\mathcal{C}$ and $\operatorname{Ext}^j_B(U,\bigcap\limits_{i=1}^n\mathcal{E}^{\perp_i})=0$ for $1\leq j\leq n+1$, then
$\bigcap\limits_{i=1}^n(\mathfrak{A}^\mathcal{C}_\mathcal{E})^{\perp_i}=\mathfrak{I}^{\mathcal{C}^{\perp_{[1,n]}}}_{\mathcal{E}^{\perp_{[1,n]}}}$.

	\end{lemma}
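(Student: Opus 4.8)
All four identities are instances of one technique, and I describe the plan uniformly. I work throughout via the equivalence $\Lambda\operatorname{-Mod}\simeq\mathrm H$, the componentwise criterion for exactness of sequences of $\Lambda$-modules, the Ext-formulas of Lemma \ref{extension formula}, and the descriptions of $\Lambda$-projectives/injectives in Lemma \ref{projective and injective}; I also use tacitly that each of $\mathcal C,\mathcal D,\mathcal E,\mathcal F$ contains the zero module. Each equality is a double inclusion. I spell out $(1)$ and $(2)$, since $(3)$ and $(4)$ follow by the evident dualization: replace $\operatorname{Hom}_\Lambda(-,N)$ by $\operatorname{Hom}_\Lambda(W,-)$, the gadget $(\begin{smallmatrix}M_1\\ U\otimes_A M_1\end{smallmatrix})$ by $(\begin{smallmatrix}\operatorname{Hom}_B(U,N_2)\\ N_2\end{smallmatrix})$, the $\operatorname{Tor}$-vanishing hypotheses by the $\operatorname{Ext}$-vanishing ones, and $B^+$ by $A$.

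For $(1)$, the inclusion $\mathfrak A^{\mathcal C^{\perp_{[1,n]}}}_{\mathcal E^{\perp_{[1,n]}}}\subseteq\bigcap_{i=1}^n(\mathfrak P^\mathcal C_\mathcal E)^{\perp_i}$ is the routine half: for $M=(\begin{smallmatrix}M_1\\ M_2\end{smallmatrix})_\varphi\in\mathfrak P^\mathcal C_\mathcal E$ monicity of $\varphi$ gives the short exact sequence $0\to(\begin{smallmatrix}M_1\\ U\otimes_A M_1\end{smallmatrix})\to(\begin{smallmatrix}M_1\\ M_2\end{smallmatrix})_\varphi\to(\begin{smallmatrix}0\\ \operatorname{Coker}\varphi\end{smallmatrix})\to 0$, and applying $\operatorname{Hom}_\Lambda(-,N)$ for $N\in\mathfrak A^{\mathcal C^{\perp_{[1,n]}}}_{\mathcal E^{\perp_{[1,n]}}}$ together with Lemma \ref{extension formula}$(2)$ and $(3)$ (the latter legitimate since $M_1\in\mathcal C$ forces $\operatorname{Tor}^A_j(U,M_1)=0$) squeezes $\operatorname{Ext}^i_\Lambda(M,N)$ between $\operatorname{Ext}^i_B(\operatorname{Coker}\varphi,N_2)=0$ and $\operatorname{Ext}^i_A(M_1,N_1)=0$ for $1\le i\le n$. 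The reverse inclusion is obtained by testing $N\in\bigcap_i(\mathfrak P^\mathcal C_\mathcal E)^{\perp_i}$ against $(\begin{smallmatrix}C\\ U\otimes_A C\end{smallmatrix})\in\mathfrak P^\mathcal C_\mathcal E$ ($C\in\mathcal C$) and $(\begin{smallmatrix}0\\ E\end{smallmatrix})\in\mathfrak P^\mathcal C_\mathcal E$ ($E\in\mathcal E$) and applying Lemma \ref{extension formula}$(3)$ and $(2)$, which give $N_1\in\mathcal C^{\perp_{[1,n]}}$ and $N_2\in\mathcal E^{\perp_{[1,n]}}$. Part $(3)$ is the mirror statement, built on the sequence $0\to(\begin{smallmatrix}\operatorname{Ker}\widetilde\varphi\\ 0\end{smallmatrix})\to(\begin{smallmatrix}M_1\\ M_2\end{smallmatrix})_\varphi\to(\begin{smallmatrix}\operatorname{Hom}_B(U,M_2)\\ M_2\end{smallmatrix})\to 0$ for $M\in\mathfrak I^\mathcal D_\mathcal F$ (valid since $\widetilde\varphi$ is epic), the test modules $(\begin{smallmatrix}D\\ 0\end{smallmatrix})$ and $(\begin{smallmatrix}\operatorname{Hom}_B(U,F)\\ F\end{smallmatrix})$, and Lemma \ref{extension formula}$(1)$ and $(4)$, the last applicable because $\operatorname{Ext}^j_B(U,F)=0$ for $F\in\mathcal F$.

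For $(2)$, the inclusion $\mathfrak P^{{}^{\perp_{[1,n]}}\mathcal D}_{{}^{\perp_{[1,n]}}\mathcal F}\subseteq\bigcap_{i=1}^n{}^{\perp_i}(\mathfrak A^\mathcal D_\mathcal F)$ goes exactly like the routine half of $(1)$, now applying $\operatorname{Hom}_\Lambda(-,N)$ to the gadget sequence for $N\in\mathfrak A^\mathcal D_\mathcal F$, with $\operatorname{Tor}^A_j(U,M_1)=0$ holding because $M_1\in{}^{\perp_{[1,n]}}\mathcal D=\bigcap_i{}^{\perp_i}\mathcal D$, which is exactly the $\operatorname{Tor}$-hypothesis. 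The substantial inclusion is $\bigcap_i{}^{\perp_i}(\mathfrak A^\mathcal D_\mathcal F)\subseteq\mathfrak P^{{}^{\perp_{[1,n]}}\mathcal D}_{{}^{\perp_{[1,n]}}\mathcal F}$, and here is the real content. Given $M=(\begin{smallmatrix}M_1\\ M_2\end{smallmatrix})_\varphi$ with $\operatorname{Ext}^i_\Lambda(M,\mathfrak A^\mathcal D_\mathcal F)=0$ for $1\le i\le n$, testing against $(\begin{smallmatrix}D\\ 0\end{smallmatrix})$ ($D\in\mathcal D$) with Lemma \ref{extension formula}$(1)$ yields $M_1\in{}^{\perp_{[1,n]}}\mathcal D$, hence $\operatorname{Tor}^A_j(U,M_1)=0$ for $1\le j\le n+1$. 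The crux is that $\varphi$ is monic, and this is where $B^+\in\mathcal F$ enters: $B^+$ is an injective cogenerator of $B\operatorname{-Mod}$ (since $\operatorname{Hom}_B(-,B^+)\cong(-)^+$), and $(\begin{smallmatrix}\operatorname{Hom}_B(U,B^+)\\ B^+\end{smallmatrix})$ is an injective $\Lambda$-module by Lemma \ref{projective and injective}$(2)$; so from $0\to(\begin{smallmatrix}0\\ B^+\end{smallmatrix})\to(\begin{smallmatrix}\operatorname{Hom}_B(U,B^+)\\ B^+\end{smallmatrix})\to(\begin{smallmatrix}\operatorname{Hom}_B(U,B^+)\\ 0\end{smallmatrix})\to 0$, applying $\operatorname{Hom}_\Lambda(M,-)$ and the natural identifications $\operatorname{Hom}_\Lambda(M,(\begin{smallmatrix}\operatorname{Hom}_B(U,B^+)\\ B^+\end{smallmatrix}))\cong M_2^+$ and $\operatorname{Hom}_\Lambda(M,(\begin{smallmatrix}\operatorname{Hom}_B(U,B^+)\\ 0\end{smallmatrix}))\cong(U\otimes_A M_1)^+$, one obtains $\operatorname{Ext}^1_\Lambda(M,(\begin{smallmatrix}0\\ B^+\end{smallmatrix}))\cong\operatorname{Coker}\bigl(\operatorname{Hom}_B(\varphi,B^+)\bigr)\cong(\operatorname{Ker}\varphi)^+$; since $(\begin{smallmatrix}0\\ B^+\end{smallmatrix})\in\mathfrak A^\mathcal D_\mathcal F$ this is $0$, so $\operatorname{Ker}\varphi=0$. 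With $\varphi$ monic, applying $\operatorname{Hom}_\Lambda(-,(\begin{smallmatrix}0\\ F\end{smallmatrix}))$ ($F\in\mathcal F$) to $0\to(\begin{smallmatrix}M_1\\ U\otimes_A M_1\end{smallmatrix})\to M\to(\begin{smallmatrix}0\\ \operatorname{Coker}\varphi\end{smallmatrix})\to 0$ and using Lemma \ref{extension formula}$(3)$ (which kills $\operatorname{Ext}^i_\Lambda((\begin{smallmatrix}M_1\\ U\otimes_A M_1\end{smallmatrix}),(\begin{smallmatrix}0\\ F\end{smallmatrix}))$ for $0\le i\le n$) and $(2)$ gives $\operatorname{Ext}^i_B(\operatorname{Coker}\varphi,F)\cong\operatorname{Ext}^i_\Lambda(M,(\begin{smallmatrix}0\\ F\end{smallmatrix}))=0$ for $1\le i\le n$, i.e. $\operatorname{Coker}\varphi\in{}^{\perp_{[1,n]}}\mathcal F$.

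Part $(4)$ is dual to $(2)$: in the substantial inclusion $\bigcap_i(\mathfrak A^\mathcal C_\mathcal E)^{\perp_i}\subseteq\mathfrak I^{\mathcal C^{\perp_{[1,n]}}}_{\mathcal E^{\perp_{[1,n]}}}$ one first gets $N_2\in\mathcal E^{\perp_{[1,n]}}$ by testing against $(\begin{smallmatrix}0\\ E\end{smallmatrix})$ (whence $\operatorname{Ext}^j_B(U,N_2)=0$ for $1\le j\le n+1$ by hypothesis); then, using $0\to(\begin{smallmatrix}0\\ U\otimes_A A\end{smallmatrix})\to(\begin{smallmatrix}A\\ U\otimes_A A\end{smallmatrix})\to(\begin{smallmatrix}A\\ 0\end{smallmatrix})\to 0$ with $\Lambda$-projective middle term (Lemma \ref{projective and injective}$(1)$) and Lemma \ref{extension formula}$(3)$, one identifies $\operatorname{Ext}^1_\Lambda((\begin{smallmatrix}A\\ 0\end{smallmatrix}),N)\cong\operatorname{Coker}\widetilde{\varphi^N}$, which vanishes because $A\in\mathcal C$ puts $(\begin{smallmatrix}A\\ 0\end{smallmatrix})$ in $\mathfrak A^\mathcal C_\mathcal E$, so $\widetilde{\varphi^N}$ is epic; finally, applying $\operatorname{Hom}_\Lambda((\begin{smallmatrix}M_1\\ 0\end{smallmatrix}),-)$ ($M_1\in\mathcal C$) to $0\to(\begin{smallmatrix}\operatorname{Ker}\widetilde{\varphi^N}\\ 0\end{smallmatrix})\to N\to(\begin{smallmatrix}\operatorname{Hom}_B(U,N_2)\\ N_2\end{smallmatrix})\to 0$ and using Lemma \ref{extension formula}$(1)$ and $(4)$ (the latter to see $\operatorname{Ext}^i_\Lambda((\begin{smallmatrix}M_1\\ 0\end{smallmatrix}),(\begin{smallmatrix}\operatorname{Hom}_B(U,N_2)\\ N_2\end{smallmatrix}))\cong\operatorname{Ext}^i_B(0,N_2)=0$) gives $\operatorname{Ext}^i_A(M_1,\operatorname{Ker}\widetilde{\varphi^N})\cong\operatorname{Ext}^i_\Lambda((\begin{smallmatrix}M_1\\ 0\end{smallmatrix}),N)=0$, i.e. $\operatorname{Ker}\widetilde{\varphi^N}\in\mathcal C^{\perp_{[1,n]}}$; the routine inclusion for $(4)$ is as in $(3)$. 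I expect the monicity/epicity step in $(2)$ and $(4)$ to be the main obstacle: everything else is bookkeeping with Lemma \ref{extension formula}, but deducing $\operatorname{Ker}\varphi=0$ (resp.\ $\operatorname{Coker}\widetilde{\varphi^N}=0$) is exactly what compels the hypotheses $B^+\in\mathcal F$ (resp.\ $A\in\mathcal C$) and requires the explicit injective (resp.\ projective) gadget above to pin down the relevant $\operatorname{Ext}^1$ group.
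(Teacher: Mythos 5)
Your proof is correct, and it follows the paper's overall architecture closely: double inclusion, testing against the same probe modules $(\begin{smallmatrix}C\\ U\otimes_A C\end{smallmatrix})$, $(\begin{smallmatrix}0\\ E\end{smallmatrix})$, $(\begin{smallmatrix}D\\ 0\end{smallmatrix})$, $(\begin{smallmatrix}\operatorname{Hom}_B(U,F)\\ F\end{smallmatrix})$, and the same gadget short exact sequence $0\to(\begin{smallmatrix}M_1\\ U\otimes_A M_1\end{smallmatrix})\to M\to(\begin{smallmatrix}0\\ \operatorname{Coker}\varphi\end{smallmatrix})\to 0$ with Lemma \ref{extension formula}. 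The genuine point of divergence is exactly where you predicted it would be, namely the monicity/epicity step in $(2)$ and $(4)$. The paper proves $\varphi$ monic in $(2)$ by applying $-\otimes_\Lambda M$ to the right $\Lambda$-module sequence $0\to(U,0)\to(U,B)\to(0,B)\to 0$, identifying the connecting term as $\operatorname{Tor}^\Lambda_1((0,B),M)$ via Lemma \ref{KT2017}, and then invoking the character-module duality $\operatorname{Tor}^\Lambda_1((0,B),M)^+\cong\operatorname{Ext}^1_\Lambda(M,(\begin{smallmatrix}0\\ B^+\end{smallmatrix}))$. You instead build an explicit two-term injective coresolution of $(\begin{smallmatrix}0\\ B^+\end{smallmatrix})$ inside $\Lambda\operatorname{-Mod}$ and compute $\operatorname{Ext}^1_\Lambda(M,(\begin{smallmatrix}0\\ B^+\end{smallmatrix}))\cong(\operatorname{Ker}\varphi)^+$ directly as a cokernel of $\operatorname{Hom}_\Lambda(M,-)$. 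Both routes land on $\operatorname{Ext}^1_\Lambda(M,(\begin{smallmatrix}0\\ B^+\end{smallmatrix}))=0\Rightarrow\operatorname{Ker}\varphi=0$ and both isolate $B^+\in\mathcal F$ as the reason this $\operatorname{Ext}^1$ vanishes. Your approach is more self-contained (no right $\Lambda$-modules, no appeal to Lemma \ref{KT2017} or the Tor--Ext duality of \cite[Cor.\ 10.63]{JJRotman}), and it dualizes cleanly for $(4)$ with the $\Lambda$-projective $(\begin{smallmatrix}A\\ U\otimes_A A\end{smallmatrix})$; the paper's approach is the standard one in Mao's line of work and keeps the two-sided Tor/Ext bookkeeping visible. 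One small cosmetic remark: when you say Lemma \ref{extension formula}(3) kills $\operatorname{Ext}^i_\Lambda((\begin{smallmatrix}M_1\\ U\otimes_A M_1\end{smallmatrix}),(\begin{smallmatrix}0\\ F\end{smallmatrix}))$ for $0\le i\le n$, the case $i=0$ is technically outside the stated range of that lemma, though of course $\operatorname{Hom}_\Lambda((\begin{smallmatrix}M_1\\ U\otimes_A M_1\end{smallmatrix}),(\begin{smallmatrix}0\\ F\end{smallmatrix}))=0$ is elementary (the paper checks it by hand via the commuting square), so this is harmless.
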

	
	\begin{proof}
		\noindent(1) ``$\subseteq$'' Let $\begin{pmatrix}
			X\\Y
		\end{pmatrix}_f\in\bigcap\limits^n_{i=1}(\mathfrak{P}^\mathcal{C}_\mathcal{E})^{\perp_i}$, $C\in\mathcal{C}$ and $E\in\mathcal{E}$.
		Since $\operatorname{Tor}^A_j(U,\mathcal{C})=0$ for $1\leq j\leq n+1$, then we have $\operatorname{Ext}_A^l(C,X)\cong\operatorname{Ext}^l_{\Lambda}(\begin{pmatrix}
			C\\U\otimes C
		\end{pmatrix},
		\begin{pmatrix}
			X\\Y
		\end{pmatrix})$
		for $1\leq l\leq n$, by Lemma \ref{extension formula}(3).
		Since $\begin{pmatrix}
			C\\U\otimes C
		\end{pmatrix}\in \mathfrak{P}^\mathcal{C}_\mathcal{E}$, then $\operatorname{Ext}^l_{\Lambda}(\begin{pmatrix}
			C\\U\otimes C
		\end{pmatrix},
		\begin{pmatrix}
			X\\Y
		\end{pmatrix})=0$
		for $1\leq l\leq n$.
		So $X\in\bigcap\limits^n_{i=1}\mathcal{C}^{\perp_i}$.
		Meanwhile, since $\begin{pmatrix}
			0\\E
		\end{pmatrix}\in\mathfrak{P}^\mathcal{C}_\mathcal{E}$, we have $\operatorname{Ext}^l_B(E,Y)\cong\operatorname{Ext}^l_{\Lambda}(\begin{pmatrix}
			0\\E
		\end{pmatrix},\begin{pmatrix}
			X\\Y
		\end{pmatrix})=0$ for $1\leq l\leq n$, by Lemma \ref{extension formula}(2).
		So $Y\in\bigcap\limits^n_{i=1}\mathcal{E}^{\perp_i}$.
		Hence $\begin{pmatrix}
			X\\Y
		\end{pmatrix}_f\in\mathfrak{A}_{\mathcal{E}^{\perp_{[1,n]}}}^{\mathcal{C}^{\perp_{[1,n]}}}$ and
		$\bigcap\limits^n_{i=1}(\mathfrak{P}^\mathcal{C}_\mathcal{E})^{\perp_i}\subseteq\mathfrak{A}_{\mathcal{E}^{\perp_{[1,n]}}}^{\mathcal{C}^{\perp_{[1,n]}}}$.
		
		``$\supseteq$'' Let $\begin{pmatrix}
			X\\Y
		\end{pmatrix}_f\in\mathfrak{A}_{\mathcal{E}^{\perp_{[1,n]}}}^{\mathcal{C}^{\perp_{[1,n]}}}$.
		For any $\begin{pmatrix}
			X'\\Y'
		\end{pmatrix}_{f'}\in\mathfrak{P}^{\mathcal{C}}_{\mathcal{E}}$,
		there exists a short exact sequence
		$$ 0\to\begin{pmatrix}
			X'\\U\otimes X'
		\end{pmatrix}_1\to\begin{pmatrix}
			X'\\Y'
		\end{pmatrix}_{f'}\to\begin{pmatrix}
			0\\ \operatorname{Coker}f'
		\end{pmatrix}_0\to 0
		$$
		in $\Lambda\operatorname{-Mod}$.
		Applying $\operatorname{Hom}_\Lambda(-,\begin{pmatrix}
			X\\Y
		\end{pmatrix})$ to this short exact sequence, we can obtain exact sequences
		$$
		\operatorname{Ext}^i_\Lambda(\begin{pmatrix}
			0\\ \operatorname{Coker}f'
		\end{pmatrix},\begin{pmatrix}
			X\\Y
		\end{pmatrix})\to
		\operatorname{Ext}^i_\Lambda(\begin{pmatrix}
			X'\\ Y'
		\end{pmatrix},\begin{pmatrix}
			X\\Y
		\end{pmatrix})\to
		\operatorname{Ext}^i_\Lambda(\begin{pmatrix}
			X'\\ U\otimes X'
		\end{pmatrix},\begin{pmatrix}
			X\\Y
		\end{pmatrix})
		$$
		for $1\leq i \leq n$.
		Since $\begin{pmatrix}
			X'\\Y'
		\end{pmatrix}_{f'}\in\mathfrak{P}^{\mathcal{C}}_{\mathcal{E}}$, we have $\operatorname{Coker}f'\in\mathcal{E}$.
		And, for $1\leq i\leq n$, we have $$\operatorname{Ext}^i_\Lambda(\begin{pmatrix}
			0\\ \operatorname{Coker}f'
		\end{pmatrix},\begin{pmatrix}
			X\\Y
		\end{pmatrix}\cong \operatorname{Ext}^i_B(\operatorname{Coker}f',Y)=0$$ by Lemma \ref{extension formula}(2).
		In addition, for $1\leq i\leq n$, we have $\operatorname{Ext}^i_\Lambda(\begin{pmatrix}
			X'\\ U\otimes X'
		\end{pmatrix},\begin{pmatrix}
			X\\Y
		\end{pmatrix})\cong\operatorname{Ext}^i_A(X',X)=0$ by Lemma \ref{extension formula}(3).
		Hence $\operatorname{Ext}^i_\Lambda(\begin{pmatrix}
			X'\\ Y'
		\end{pmatrix},\begin{pmatrix}
			X\\Y
		\end{pmatrix})=0$ for $1\leq i\leq n$ and $\begin{pmatrix}
			X\\Y
		\end{pmatrix}_f\in\bigcap\limits^n_{i=1}(\mathfrak{P}^\mathcal{C}_\mathcal{E})^{\perp_i}$.
		Therefore $\bigcap\limits^n_{i=1}(\mathfrak{P}^\mathcal{C}_\mathcal{E})^{\perp_i}\supseteq\mathfrak{A}_{\mathcal{E}^{\perp_{[1,n]}}}^{\mathcal{C}^{\perp_{[1,n]}}}$.
		
		\noindent(2)``$\subseteq$''
		Let $\begin{pmatrix}
			X\\Y
		\end{pmatrix}_f\in\bigcap\limits^n_{i=1}{}^{\perp_i}(\mathfrak{A}^\mathcal{D}_\mathcal{F})$.
		For any $D\in\mathcal{D}$, we have $\operatorname{Ext}^l_A(X,D)\cong\operatorname{Ext}^l_A(\begin{pmatrix}
			X\\Y
		\end{pmatrix},\begin{pmatrix}
			D\\0
		\end{pmatrix})=0$ for $1\leq l\leq n$ by Lemma \ref{extension formula}(1).
		So $X\in\bigcap\limits^n_{i=1}{}^{\perp_i}\mathcal{D}$.
		
		Applying $-\otimes_\Lambda\begin{pmatrix}
			X\\Y
		\end{pmatrix}_f$ to the short exact sequence
		$$0\to (U,0)\to (U,B)\to (0,B)\to 0$$ in $\operatorname{Mod-}\Lambda$,
		there exists an exact sequence in the first row of following diagram:
		$$\xymatrix@1{\operatorname{Tor}^\Lambda_1((0,B),{\begin{pmatrix}
					X\\Y
			\end{pmatrix}}) \ar[r] & (U,0)\otimes_\Lambda{\begin{pmatrix}
					X\\Y
			\end{pmatrix}} \ar[r]\ar[d]^{\cong} & (U,B)\otimes_\Lambda{\begin{pmatrix}
					X\\Y
			\end{pmatrix}}\ar[d]^{\cong} \\
			& U\otimes_A X \ar[r]^f & Y}$$
		By Lemma \ref{KT2017}, $(U,0)\otimes_\Lambda{\begin{pmatrix}
				X\\Y
		\end{pmatrix}}\cong U\otimes_A X$, $(U,B)\otimes_\Lambda\begin{pmatrix}
			X\\Y
		\end{pmatrix}\cong Y$ and the above square is commutative.
		By \cite[Corollary 10.63]{JJRotman},  $(\operatorname{Tor}^\Lambda_1((0,B),{\begin{pmatrix}
				X\\Y
		\end{pmatrix}}))^+\cong \operatorname{Ext}^1_\Lambda(\begin{pmatrix}
			X\\Y
		\end{pmatrix},\begin{pmatrix}
			0\\B^+
		\end{pmatrix})=0$.
		So $\operatorname{Tor}^\Lambda_1((0,B),{\begin{pmatrix}
				X\\Y
		\end{pmatrix}})=0$.
		And hence $f:U\otimes_A X\to Y$ is a momomorphism.
		
		It follows that there exists a short exact sequence:
		$$0\to \begin{pmatrix}
			X\\U\otimes_A X
		\end{pmatrix}_1\to\begin{pmatrix}
			X\\Y
		\end{pmatrix}_f\to\begin{pmatrix}
			0\\ \operatorname{Coker}f
		\end{pmatrix}_0\to 0$$ in $\Lambda\operatorname{-Mod}$.
		For any $F\in\mathcal{F}$, $\operatorname{Hom}_\Lambda(-,\begin{pmatrix}
			0\\F
		\end{pmatrix})$ induces the following exact sequence:
		$$\operatorname{Hom}_\Lambda(\begin{pmatrix}
			X\\U\otimes X
		\end{pmatrix},\begin{pmatrix}
			0\\F
		\end{pmatrix})\to\operatorname{Ext}^1_\Lambda(\begin{pmatrix}
			0\\ \operatorname{Coker}f
		\end{pmatrix},\begin{pmatrix}
			0\\F
		\end{pmatrix})\to\operatorname{Ext}^1_\Lambda(\begin{pmatrix}
			X\\Y
		\end{pmatrix},\begin{pmatrix}
			0\\F
		\end{pmatrix}).$$
		For any $\begin{pmatrix}
			0\\ \beta
		\end{pmatrix}:\begin{pmatrix}
			X\\ U\otimes X
		\end{pmatrix}\to\begin{pmatrix}
			0\\F
		\end{pmatrix}$, there exists a commutative diagram:
		$$\xymatrix{
			U\otimes_A X \ar[r]^0 \ar[d]^1 & U\otimes_A 0\ar[d]^0\\
			U\otimes_A X \ar[r]^\beta & F
		}$$
		
		It follows that $\beta=0$ and $\operatorname{Hom}_\Lambda(\begin{pmatrix}
			X\\U\otimes X
		\end{pmatrix},\begin{pmatrix}
			0\\F
		\end{pmatrix})=0$.
		And it is easy to see $\operatorname{Ext}^1_\Lambda(\begin{pmatrix}
			X\\Y
		\end{pmatrix},\begin{pmatrix}
			0\\F
		\end{pmatrix})=0$.
		So we have $\operatorname{Ext}^1_B(\operatorname{Coker}f,F)\cong\operatorname{Ext}^1_\Lambda(\begin{pmatrix}
			0\\ \operatorname{Coker}f
		\end{pmatrix},\begin{pmatrix}
			0\\F
		\end{pmatrix})=0$.
		We want to show $\operatorname{Coker}f\in\bigcap\limits^n_{i=1}{}^{\perp_i}\mathcal{F}$.
		Now, it remains to show $\operatorname{Ext}^l_B(\operatorname{Coker}f,F)=0$ for $2\leq l\leq n$. Consider the following exact sequences
		$$\operatorname{Ext}^{l-1}_\Lambda(\begin{pmatrix}
			X\\U\otimes_A X
		\end{pmatrix},\begin{pmatrix}
			0\\F
		\end{pmatrix})\to\operatorname{Ext}^{l}_\Lambda(\begin{pmatrix}
			0\\ \operatorname{Coker}f
		\end{pmatrix},\begin{pmatrix}
			0\\F
		\end{pmatrix})\to\operatorname{Ext}^{l}_\Lambda(\begin{pmatrix}
			X\\ Y
		\end{pmatrix},\begin{pmatrix}
			0\\F
		\end{pmatrix})$$
		for $2\leq l\leq n$.
		By Lemma \ref{extension formula}(3), we have $\operatorname{Ext}_\Lambda^{l-1}(\begin{pmatrix}
			X\\U\otimes_A X
		\end{pmatrix},\begin{pmatrix}
			0\\F
		\end{pmatrix})\cong\operatorname{Ext}_A^{l-1}(X,0)=0$ for $2\leq l\leq n$. In addition, since $\begin{pmatrix}
			X\\Y
		\end{pmatrix}\in\bigcap\limits^n_{i=1}{}^{\perp_i}(\mathfrak{A}^\mathcal{D}_\mathcal{F})$, we have $\operatorname{Ext}^{l}_\Lambda(\begin{pmatrix}
			X\\ Y
		\end{pmatrix},\begin{pmatrix}
			0\\F
		\end{pmatrix})=0$ for $2\leq l\leq n$.
		Hence $\operatorname{Ext}^l_B(\operatorname{Coker}f,F)\cong\operatorname{Ext}^{l}_\Lambda(\begin{pmatrix}
			0\\ \operatorname{Coker}f
		\end{pmatrix},\begin{pmatrix}
			0\\F
		\end{pmatrix})=0$ for $2\leq l\leq n$ by Lemma \ref{extension formula}(2).
		So $\operatorname{Coker}f\in\bigcap\limits^n_{i=1}{}^{\perp_i}\mathcal{F}$ and $\begin{pmatrix}
			X\\Y
		\end{pmatrix}_f\in\mathfrak{P}^{{}^{\perp_{[1,n]}}\mathcal{D}}_{{}^{\perp_{[1,n]}}\mathcal{F}}$.
		Therefore $\bigcap\limits^n_{i=1}{}^{\perp_i}(\mathfrak{A}^\mathcal{D}_\mathcal{F})\subseteq\mathfrak{P}^{{}^{\perp_{[1,n]}}\mathcal{D}}_{{}^{\perp_{[1,n]}}\mathcal{F}}$.
		
		\noindent``$\supseteq$'' Let $\begin{pmatrix}
			X\\Y
		\end{pmatrix}_f\in\mathfrak{P}^{{}^{\perp_{[1,n]}}\mathcal{D}}_{{}^{\perp_{[1,n]}}\mathcal{F}}$.
		For any $\begin{pmatrix}
			D\\F
		\end{pmatrix}_{f'}\in\mathfrak{A}^\mathcal{D}_\mathcal{F}$,
		the short exact sequence $$0\to \begin{pmatrix}
			X\\U\otimes_A X
		\end{pmatrix}_1\to\begin{pmatrix}
			X\\Y
		\end{pmatrix}_f\to\begin{pmatrix}
			0\\ \operatorname{Coker}f
		\end{pmatrix}_0\to 0$$
		induces the exact sequences
		$$\operatorname{Ext}^l_\Lambda(\begin{pmatrix}
			0\\ \operatorname{Coker}f
		\end{pmatrix},\begin{pmatrix}
			D\\F
		\end{pmatrix})\to\operatorname{Ext}^{l}_\Lambda(\begin{pmatrix}
			X\\ Y
		\end{pmatrix},\begin{pmatrix}
			D\\F
		\end{pmatrix})\to\operatorname{Ext}^{l}_\Lambda(\begin{pmatrix}
			X\\ U\otimes X
		\end{pmatrix},\begin{pmatrix}
			D\\F
		\end{pmatrix})$$
		for $1\leq l\leq n$.
		Since $\begin{pmatrix}
			X\\Y
		\end{pmatrix}_f\in\mathfrak{P}^{{}^{\perp_{[1,n]}}\mathcal{D}}_{{}^{\perp_{[1,n]}}\mathcal{F}}$, then $\operatorname{Coker}f\in\bigcap\limits_{i=1}^n{}^{\perp_i}\mathcal{F}$.
		By Lemma \ref{extension formula}(2), $$\operatorname{Ext}^l_\Lambda(\begin{pmatrix}
			0\\ \operatorname{Coker}f
		\end{pmatrix},\begin{pmatrix}
			D\\F
		\end{pmatrix})\cong \operatorname{Ext}^l_B(\operatorname{Coker}f,F)=0$$ for $1\leq l \leq n$.
		By Lemma \ref{extension formula}(3), $\operatorname{Ext}^{l}_\Lambda(\begin{pmatrix}
			X\\ U\otimes X
		\end{pmatrix},\begin{pmatrix}
			D\\F
		\end{pmatrix})\cong\operatorname{Ext}^l_A(X,D)=0$ for $1\leq l \leq n$.
		So $\operatorname{Ext}^{l}_\Lambda(\begin{pmatrix}
			X\\ Y
		\end{pmatrix},\begin{pmatrix}
			D\\F
		\end{pmatrix})=0$ for $1\leq l\leq n$ and $\begin{pmatrix}
			X\\Y
		\end{pmatrix}_f\in\bigcap\limits^n_{i=1}{}^{\perp_i}(\mathfrak{A}^\mathcal{D}_\mathcal{F})$.
		Therefore $\bigcap\limits^n_{i=1}{}^{\perp_i}(\mathfrak{A}^\mathcal{D}_\mathcal{F})\supseteq\mathfrak{P}^{{}^{\perp_{[1,n]}}\mathcal{D}}_{{}^{\perp_{[1,n]}}\mathcal{F}}$.

\noindent(3) The proof is similar to (1).

\noindent(4) The proof is similar to (2).
	\end{proof}

\begin{proposition}\label{prop5}
Let $R$ be a ring, $\mathcal{C}, \mathcal{F}$ be classes of left $R$-modules in $R\operatorname{-Mod}$.

\noindent$(1)$ Let $$0\to X\xrightarrow{d_0}C_0\xrightarrow{d_1}C_1\xrightarrow{
}\cdots\xrightarrow{} C_{n-1}\xrightarrow{d_n} C_n\to0$$ be an exact sequence in $R\operatorname{-Mod}$ with $C_i\in\mathcal{C}$ for $0\leq i\leq n$.
For any right $R$-module $M$,
if $\operatorname{Tor}^R_1(M,\mathcal{C}^\vee_n)=0$, then there exists an exact sequence $$ 0\to M\otimes_R X\xrightarrow{1\otimes_R d_0} M\otimes_R C_0\xrightarrow{1\otimes_R d_1} M\otimes_R C_1\xrightarrow{} \cdots\xrightarrow{}M\otimes_R C_{n-1}\xrightarrow{1\otimes_R d_n} M\otimes_R C_n\to 0.$$

\noindent$(2)$ Let $$0\to F_n\xrightarrow{f_n} F_{n-1}\to \cdots\to F_1\xrightarrow{f_1} F_0\xrightarrow{f_0} Y\to 0 $$ be an exact sequence in $R\operatorname{-Mod}$ with $F_i\in\mathcal{F}$ for $0\leq i\leq n$. For any left $R$-module $N$,
if $\operatorname{Ext}_R^1(N,\mathcal{F}^\wedge_n)=0$, then there exists an exact sequence
$$0\to\operatorname{Hom}_R(N,F_n)\xrightarrow{\overline{f_n}}\operatorname{Hom}_R(N,F_{n-1})\to\cdots\to\operatorname{Hom}_R(N,F_1)\xrightarrow{\overline{f_1}}\operatorname{Hom}_R(N,F_0)\xrightarrow{\overline{f_0}}\operatorname{Hom}_R(N,Y)\to 0,$$
where $\overline{f_i}=\operatorname{Hom}_R(N,f_i)$ for $0\leq i\leq n$.
\end{proposition}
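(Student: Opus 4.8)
The plan is to prove (1) by decomposing the given exact sequence into short exact sequences, applying $M\otimes_R-$ to each piece, and splicing the results back together; part (2) is the formal dual, argued symmetrically with $\operatorname{Hom}_R(N,-)$, so I only sketch it briefly.

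For (1), I would break $0\to X\to C_0\to\cdots\to C_n\to 0$ into the short exact sequences
$$0\to K_k\to C_k\to K_{k+1}\to 0\qquad(0\le k\le n-1),$$
where $K_0=X$, $K_n=C_n$, and $K_{k+1}:=\operatorname{Coker}(K_k\to C_k)$ (so $K_{k+1}$ is the image of $d_{k+1}$ when $k<n-1$, and $K_n=C_n$ because $d_n$ is an epimorphism). The key observation is that each $K_j$ fits into the exact sequence $0\to K_j\to C_j\to C_{j+1}\to\cdots\to C_n\to 0$, hence admits a $\mathcal{C}$-coresolution of length $n-j\le n$; thus $K_j\in\mathcal{C}^\vee_{n-j}\subseteq\mathcal{C}^\vee_n$, using that $\mathcal{C}^\vee_i\subseteq\mathcal{C}^\vee_n$ whenever $i\le n$. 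By hypothesis $\operatorname{Tor}^R_1(M,K_{k+1})=0$ for every $k$ (for $k=n-1$ this reads $\operatorname{Tor}^R_1(M,C_n)=0$, valid since $C_n\in\mathcal{C}=\mathcal{C}^\vee_0\subseteq\mathcal{C}^\vee_n$), so the long exact Tor-sequence shows that applying $M\otimes_R-$ keeps each of the displayed short exact sequences short exact. Splicing these $n$ short exact sequences produces an exact sequence $0\to M\otimes_R X\to M\otimes_R C_0\to\cdots\to M\otimes_R C_n\to 0$, and since $d_{k+1}$ factors as $C_k\twoheadrightarrow K_{k+1}\hookrightarrow C_{k+1}$, the induced connecting map $M\otimes_R C_k\to M\otimes_R C_{k+1}$ is precisely $1\otimes_R d_{k+1}$; this is the asserted sequence.

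For (2), dually I would break $0\to F_n\to\cdots\to F_0\to Y\to 0$ into short exact sequences $0\to L_{k+1}\to F_k\to L_k\to 0$ for $0\le k\le n-1$, with $L_0=Y$, $L_n=F_n$, and $L_{k+1}:=\operatorname{Ker}(F_k\to L_k)$. Each $L_j$ has an $\mathcal{F}$-resolution $0\to F_n\to\cdots\to F_j\to L_j\to 0$ of length $n-j\le n$, so $L_j\in\mathcal{F}^\wedge_{n-j}\subseteq\mathcal{F}^\wedge_n$, whence $\operatorname{Ext}^1_R(N,L_{k+1})=0$ for all $k$ by hypothesis (for $k=n-1$ this is $\operatorname{Ext}^1_R(N,F_n)=0$). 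Applying $\operatorname{Hom}_R(N,-)$ then keeps each of these short exact sequences short exact, and splicing them, together with the factorization $f_k=(L_k\hookrightarrow F_{k-1})\circ(F_k\twoheadrightarrow L_k)$, yields the claimed exact sequence with connecting maps $\overline{f_i}=\operatorname{Hom}_R(N,f_i)$.

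The only point that deserves care is the dimension bookkeeping — verifying that every (co)syzygy occurring in the decomposition really lies in $\mathcal{C}^\vee_n$ (resp. $\mathcal{F}^\wedge_n$), which is exactly what allows the single hypothesis $\operatorname{Tor}^R_1(M,\mathcal{C}^\vee_n)=0$ (resp. $\operatorname{Ext}^1_R(N,\mathcal{F}^\wedge_n)=0$) to handle each short exact sequence. Everything else is the routine splicing of short exact sequences and identification of the induced maps, so I do not anticipate a genuine obstacle.
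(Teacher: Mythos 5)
Your proposal is correct and follows essentially the same route as the paper: both decompose the long exact sequence into the short exact sequences with cosyzygies $K_{j+1}=\operatorname{Coker}d_j$ (resp.\ syzygies), observe that each such cosyzygy lies in $\mathcal{C}^\vee_n$ by truncating the given coresolution, and then apply the $\operatorname{Tor}_1$-vanishing (resp.\ $\operatorname{Ext}^1$-vanishing) hypothesis to each piece before splicing. The only difference is notational ($K_j$ versus $\operatorname{Coker}d_{j-1}$), so there is nothing further to add.
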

\begin{proof}
$(1)$The short exact sequence $0\to X\xrightarrow{d_0}C_0\to \operatorname{Coker}d_0\to 0$ induces an exact sequence
$$\operatorname{Tor}^R_1(M,\operatorname{Coker}d_0)\to M\otimes_R X\to M\otimes_R C_0\to M\otimes_R \operatorname{Coker}d_0\to 0.$$
Clearly, $\operatorname{Coker}d_i\in\mathcal{C}^\vee_n$ for $0\leq i\leq n$.
So $\operatorname{Tor}^R_1(M,\operatorname{Coker}d_0)=0$ and there is a short exact sequence $$0\to M\otimes_R X\to M\otimes_R C_0\to M\otimes_R \operatorname{Coker}d_0\to 0.$$
For $0\leq i\leq n-2$, the short exact sequence $0\to \operatorname{Coker} d_i\to C_{i+1}\to \operatorname{Coker}d_{i+1}\to 0$ induces the exact sequence
$$\operatorname{Tor}^R_1(M,\operatorname{Coker}d_{i+1})\to M\otimes_R \operatorname{Coker}d_i\to M\otimes_R C_{i+1}\to M\otimes_R \operatorname{Coker}d_{i+1}\to 0$$
where $\operatorname{Tor}^R_1(M,\operatorname{Coker}d_{i+1})$ equals 0.
So there exist short exact sequences
$$0\to M\otimes_R \operatorname{Coker}d_i\to M\otimes_R C_{i+1}\to M\otimes_R \operatorname{Coker}d_{i+1}\to 0$$
for $0\leq i\leq n-2$.
Hence we have the exact sequence $$ 0\to M\otimes_R X\xrightarrow{1\otimes_R d_0} M\otimes_R C_0\xrightarrow{1\otimes_R d_1} M\otimes_R C_1\xrightarrow{} \cdots\xrightarrow{}M\otimes_R C_{n-1}\xrightarrow{1\otimes_R d_n} M\otimes_R C_n\to 0.$$

\noindent$(2)$ The proof is similar to $(1)$.
\end{proof}
	
\begin{lemma}\label{lemma 6}
		Let $\mathcal{C},\mathcal{D}$ be classes in $A\operatorname{-Mod}$ and $\mathcal{E},\mathcal{F}$ be classes in $B$-$\mathrm{Mod}$. The following statements hold.

\noindent$(1)$ $(\mathfrak{A}^\mathcal{C}_\mathcal{E})^\vee_n\subseteq\mathfrak{A}^{\mathcal{C}^\vee_n}_{\mathcal{E}^\vee_n}$. In addition, if $(\mathfrak{A}^\mathcal{C}_\mathcal{E})^\vee_n$ is closed under extensions, then $(\mathfrak{A}^\mathcal{C}_\mathcal{E})^\vee_n=\mathfrak{A}^{\mathcal{C}^\vee_n}_{\mathcal{E}^\vee_n}$ .

\noindent$(2)$ $(\mathfrak{A}^\mathcal{D}_\mathcal{F})^\wedge_n\subseteq\mathfrak{A}^{\mathcal{D}^\wedge_n}_{\mathcal{F}^\wedge_n}$. In addition, if $(\mathfrak{A}^\mathcal{D}_\mathcal{F})^\wedge_n$ is closed under extensions, then $(\mathfrak{A}^\mathcal{D}_\mathcal{F})^\wedge_n=\mathfrak{A}^{\mathcal{D}^\wedge_n}_{\mathcal{F}^\wedge_n}$ .

\noindent$(3)$ If $\operatorname{Tor}^A_1(U,\mathcal{C}^\vee_n)=0$, then $(\mathfrak{P}^\mathcal{C}_\mathcal{E})^\vee_n\subseteq\mathfrak{P}^{\mathcal{C}^\vee_n}_{\mathcal{E}^\vee_n}$.

\noindent$(4)$ If $\operatorname{Tor}^A_1(U,\mathcal{C}^\vee_n)=0$ and $(\mathfrak{P}^\mathcal{C}_\mathcal{E})^\vee_n$ is closed under extensions, then $\mathfrak{P}^{\mathcal{C}^\vee_n}_{\mathcal{E}^\vee_n}\subseteq(\mathfrak{P}^\mathcal{C}_\mathcal{E})^\vee_n$.

\noindent$(5)$ If $\operatorname{Ext}^1_B(U,\mathcal{F}^\wedge_n)=0$, then $(\mathfrak{I}^\mathcal{D}_\mathcal{F})^\wedge_n\subseteq\mathfrak{I}^{\mathcal{D}^\wedge_n}_{\mathcal{F}^\wedge_n}$.

\noindent$(6)$ If $\operatorname{Ext}^1_B(U,\mathcal{F}^\wedge_n)=0$ and $(\mathfrak{I}^\mathcal{D}_\mathcal{F})^\wedge_n$ is closed under extensions, then $\mathfrak{I}^{\mathcal{D}^\wedge_n}_{\mathcal{F}^\wedge_n}\subseteq(\mathfrak{I}^\mathcal{D}_\mathcal{F})^\wedge_n$.
\end{lemma}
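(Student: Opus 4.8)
The plan is to prove the six statements in pairs, with (1)/(2), (3)/(5), and (4)/(6) being dual to one another (the second in each pair obtained by the standard opposite-ring argument that swaps $\mathfrak{A}$/$\mathfrak{A}$, $\mathfrak{P}$/$\mathfrak{I}$, $\operatorname{Tor}^A(U,-)$ with $\operatorname{Ext}_B(U,-)$, $\vee$ with $\wedge$, kernels with cokernels). So I would only write out (1), (3), (4) in detail and indicate that (2), (5), (6) follow dually.

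For (1), the inclusion $(\mathfrak{A}^\mathcal{C}_\mathcal{E})^\vee_n\subseteq\mathfrak{A}^{\mathcal{C}^\vee_n}_{\mathcal{E}^\vee_n}$ is immediate: a $\mathfrak{A}^\mathcal{C}_\mathcal{E}$-coresolution of length $\le n$ of $\bigl(\begin{smallmatrix}M_1\\M_2\end{smallmatrix}\bigr)$ restricts, via the componentwise exactness of sequences in $\Lambda\operatorname{-Mod}$ recalled in Section \ref{Section 2}, to a $\mathcal{C}$-coresolution of $M_1$ of length $\le n$ and an $\mathcal{E}$-coresolution of $M_2$ of length $\le n$. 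For the reverse inclusion under the extension-closedness hypothesis: given $\bigl(\begin{smallmatrix}M_1\\M_2\end{smallmatrix}\bigr)_\varphi\in\mathfrak{A}^{\mathcal{C}^\vee_n}_{\mathcal{E}^\vee_n}$, I would take a $\mathcal{C}$-coresolution of $M_1$ and an $\mathcal{E}$-coresolution of $M_2$, assemble from them the two "obvious" short exact sequences $0\to\bigl(\begin{smallmatrix}M_1\\U\otimes_A M_1\end{smallmatrix}\bigr)_1\to\bigl(\begin{smallmatrix}M_1\\M_2\end{smallmatrix}\bigr)_\varphi\to\bigl(\begin{smallmatrix}0\\\operatorname{Coker}\varphi\end{smallmatrix}\bigr)_0\to 0$ — no wait, $\varphi$ need not be monic here — so instead I would realize $\bigl(\begin{smallmatrix}M_1\\M_2\end{smallmatrix}\bigr)$ as an extension of $\bigl(\begin{smallmatrix}0\\M_2\end{smallmatrix}\bigr)$ by $\bigl(\begin{smallmatrix}M_1\\0\end{smallmatrix}\bigr)$ via $0\to\bigl(\begin{smallmatrix}M_1\\0\end{smallmatrix}\bigr)\to\bigl(\begin{smallmatrix}M_1\\M_2\end{smallmatrix}\bigr)\to\bigl(\begin{smallmatrix}0\\M_2\end{smallmatrix}\bigr)\to 0$, observe that $\bigl(\begin{smallmatrix}M_1\\0\end{smallmatrix}\bigr)\in(\mathfrak{A}^\mathcal{C}_\mathcal{E})^\vee_n$ (coresolve $M_1$ by $\mathcal{C}$ and pad with zeros) and likewise $\bigl(\begin{smallmatrix}0\\M_2\end{smallmatrix}\bigr)\in(\mathfrak{A}^\mathcal{C}_\mathcal{E})^\vee_n$, and then invoke the hypothesis that $(\mathfrak{A}^\mathcal{C}_\mathcal{E})^\vee_n$ is closed under extensions.

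For (3) and (5), the content is a compatibility between the functor $U\otimes_A-$ (resp. $\operatorname{Hom}_B(U,-)$) and coresolution (resp. resolution) dimensions: given $\bigl(\begin{smallmatrix}M_1\\M_2\end{smallmatrix}\bigr)_\varphi\in(\mathfrak{P}^\mathcal{C}_\mathcal{E})^\vee_n$, take a $\mathfrak{P}^\mathcal{C}_\mathcal{E}$-coresolution of length $\le n$; its first row is a $\mathcal{C}$-coresolution of $M_1$ of length $\le n$, so $M_1\in\mathcal{C}^\vee_n$, and I must show $\operatorname{Coker}\varphi\in\mathcal{E}^\vee_n$ and $\varphi$ is monic. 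Here Proposition \ref{prop5}(1) is exactly the tool: since $\operatorname{Tor}^A_1(U,\mathcal{C}^\vee_n)=0$, applying $U\otimes_A-$ to the $\mathcal{C}$-coresolution of $M_1$ keeps it exact, and a diagram chase (snake lemma / the fact that cokernels of the coresolution terms lie in the relevant classes) identifies $\operatorname{Coker}\varphi$ with an iterated cokernel built from the $\operatorname{Coker}$'s of the triple terms, each of which lies in $\mathcal{E}$; monicity of $\varphi$ follows because each term of the $\mathfrak{P}^\mathcal{C}_\mathcal{E}$-coresolution has monic structure map and $U\otimes_A-$ stays exact. Statement (5) is dual, using Proposition \ref{prop5}(2) and the adjoint description $\widetilde{\varphi}$.

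For (4) and (6), the reverse inclusions, I would argue as in (1): given $\bigl(\begin{smallmatrix}M_1\\M_2\end{smallmatrix}\bigr)_\varphi\in\mathfrak{P}^{\mathcal{C}^\vee_n}_{\mathcal{E}^\vee_n}$, so $\varphi$ monic, $M_1\in\mathcal{C}^\vee_n$, $\operatorname{Coker}\varphi\in\mathcal{E}^\vee_n$. Use the short exact sequence $0\to\bigl(\begin{smallmatrix}M_1\\U\otimes_A M_1\end{smallmatrix}\bigr)_1\to\bigl(\begin{smallmatrix}M_1\\M_2\end{smallmatrix}\bigr)_\varphi\to\bigl(\begin{smallmatrix}0\\\operatorname{Coker}\varphi\end{smallmatrix}\bigr)_0\to 0$ (valid since $\varphi$ is monic). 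The third term is in $(\mathfrak{P}^\mathcal{C}_\mathcal{E})^\vee_n$ by padding a $\mathcal{E}$-coresolution. For the first term, I need $\bigl(\begin{smallmatrix}M_1\\U\otimes_A M_1\end{smallmatrix}\bigr)_1\in(\mathfrak{P}^\mathcal{C}_\mathcal{E})^\vee_n$: take a $\mathcal{C}$-coresolution $0\to M_1\to C_0\to\cdots\to C_n\to 0$, apply $U\otimes_A-$ (exact by $\operatorname{Tor}^A_1(U,\mathcal{C}^\vee_n)=0$ and Proposition \ref{prop5}(1)), and observe that the resulting termwise maps $\bigl(\begin{smallmatrix}C_i\\U\otimes_A C_i\end{smallmatrix}\bigr)_1$ do assemble into an exact complex in $\Lambda\operatorname{-Mod}$ with terms in $\mathfrak{P}^\mathcal{C}_\mathcal{E}$ — precisely because $U\otimes_A-$ applied to the whole coresolution is exact, the $\Lambda$-complex is exact. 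Then closedness under extensions finishes it.

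The main obstacle I anticipate is the bookkeeping in (3): correctly identifying $\operatorname{Coker}\varphi$ as lying in $\mathcal{E}^\vee_n$ from the $\mathfrak{P}^\mathcal{C}_\mathcal{E}$-coresolution requires simultaneously tracking the first-row $\mathcal{C}$-coresolution, the second-row complex, and the cokernel maps, and checking that applying $U\otimes_A-$ to the first row (which Proposition \ref{prop5}(1) guarantees stays exact) is compatible with the structure maps $\varphi_i$ of each triple term so that the Coker's sit in an exact complex of length $\le n$ with terms in $\mathcal{E}$. The extension-closedness hypothesis in (4) and (6) is doing real work — without it one only gets the inclusions $\subseteq$ — so I would emphasize that those hypotheses are not cosmetic.
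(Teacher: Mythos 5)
Your overall strategy matches the paper's: prove (1), (3), (4) directly and obtain (2), (5), (6) dually; use componentwise exactness for the forward inclusion in (1); apply Proposition~\ref{prop5}(1) to the first-row $\mathcal{C}$-coresolution and chase the resulting $3\times(n{+}2)$ commutative diagram for (3); and decompose via two short exact sequences plus extension-closure for (4). Your bookkeeping sketch in (3) --- exactness of $U\otimes_A-$ on the coresolution, monicity of $\varphi$ inherited via the commutative ladder, the cokernel row exact with terms in $\mathcal{E}$ --- is precisely the content of the paper's diagram.

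There is one genuine slip, in (1). You write the short exact sequence as
$0\to\bigl(\begin{smallmatrix}M_1\\0\end{smallmatrix}\bigr)\to\bigl(\begin{smallmatrix}M_1\\M_2\end{smallmatrix}\bigr)\to\bigl(\begin{smallmatrix}0\\M_2\end{smallmatrix}\bigr)\to 0$,
but this sequence does not exist in the category $\mathrm{H}$. The candidate inclusion $\bigl(\begin{smallmatrix}M_1\\0\end{smallmatrix}\bigr)\to\bigl(\begin{smallmatrix}M_1\\M_2\end{smallmatrix}\bigr)_\varphi$, with identity on the first coordinate and zero on the second, would require the square
$$\xymatrix{U\otimes_A M_1 \ar[r]^{1}\ar[d]_{0} & U\otimes_A M_1\ar[d]^{\varphi}\\ 0\ar[r] & M_2}$$
to commute, i.e.\ $\varphi=0$, which fails in general. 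The canonical exact sequence for a left $\Lambda$-module runs the other way:
$0\to\bigl(\begin{smallmatrix}0\\M_2\end{smallmatrix}\bigr)\to\bigl(\begin{smallmatrix}M_1\\M_2\end{smallmatrix}\bigr)\to\bigl(\begin{smallmatrix}M_1\\0\end{smallmatrix}\bigr)\to 0$,
which is what the paper uses. Reversing the arrows, the remainder of your argument --- padding $\mathcal{C}$- and $\mathcal{E}$-coresolutions by zeros to show both end terms lie in $(\mathfrak{A}^\mathcal{C}_\mathcal{E})^\vee_n$ and then closing under extensions --- goes through unchanged and agrees with the paper. Note that the analogous sequence you use in (4), namely $0\to\bigl(\begin{smallmatrix}M_1\\U\otimes_AM_1\end{smallmatrix}\bigr)_1\to\bigl(\begin{smallmatrix}M_1\\M_2\end{smallmatrix}\bigr)_\varphi\to\bigl(\begin{smallmatrix}0\\\operatorname{Coker}\varphi\end{smallmatrix}\bigr)_0\to 0$, is correctly oriented, so the error is isolated to (1).
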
	

\begin{proof}
(1) For any $\begin{pmatrix}
	X\\Y
\end{pmatrix}_f\in(\mathfrak{A}^\mathcal{C}_\mathcal{E})^\vee_n$,
there exists an exact sequence
$$0\to\begin{pmatrix}
	X\\Y
\end{pmatrix}\to\begin{pmatrix}
	C_0\\E_0
\end{pmatrix}\to\begin{pmatrix}
	C_1\\E_1
\end{pmatrix}\to\cdots\to\begin{pmatrix}
	C_n\\E_n
\end{pmatrix}\to 0$$
in $\Lambda\operatorname{-Mod}$, where $\begin{pmatrix}
	C_i\\E_i
\end{pmatrix}\in\mathfrak{A}^\mathcal{C}_\mathcal{E}$ for $0\leq i\leq n$.
Then we have the exact sequences
$$ 0\to X\to C_0\to C_1\to\cdots\to C_n\to 0$$ in $A\operatorname{-Mod}$ and
$ 0\to Y\to E_0\to E_1\to\cdots\to  E_n\to 0$ in $B\operatorname{-Mod}$.
So $X\in\mathcal{C}^\vee_n$ and $Y\in E^\vee_n$.
Hence $\begin{pmatrix}
	X\\Y
\end{pmatrix}_f\in\mathfrak{A}^{\mathcal{C}^\vee_n}_{\mathcal{E}^\vee_n}$ and $(\mathfrak{A}^\mathcal{C}_\mathcal{E})^\vee_n\subseteq\mathfrak{A}^{\mathcal{C}^\vee_n}_{\mathcal{E}^\vee_n}$.

On the other hand, for any $\begin{pmatrix}
	X'\\Y'
\end{pmatrix}_{f'}\in\mathfrak{A}^{\mathcal{C}^\vee_n}_{\mathcal{E}^\vee_n}$,
there exists an exact sequence
$$0\to\begin{pmatrix}
	0\\Y'
\end{pmatrix}\to\begin{pmatrix}
X'\\Y'
\end{pmatrix}\to\begin{pmatrix}
X'\\0
\end{pmatrix}\to 0 $$ in $\Lambda\operatorname{-Mod}$.
Since $X'\in\mathcal{C}^\vee_n$, then there exists an exact sequence
$ 0\to X'\to C_0\to C_1\to\cdots\to C_n\to 0$ in $A\operatorname{-Mod}$,
where $C_i\in\mathcal{C}$ for $0\leq i\leq n$.
Further, we have the exact sequence
$$0\to\begin{pmatrix}
	X'\\0
\end{pmatrix}\to\begin{pmatrix}
C_0\\0
\end{pmatrix}\to\begin{pmatrix}
C_1\\0
\end{pmatrix}\to\cdots\to\begin{pmatrix}
C_n\\0
\end{pmatrix}\to 0.$$
So $\begin{pmatrix}
	X'\\0
\end{pmatrix}\in(\mathfrak{A}^\mathcal{C}_\mathcal{E})^\vee_n$.
Since $Y'\in\mathcal{E}^\vee_n$, there exists an exact sequence
$ 0\to Y'\to E_0\to E_1\to\cdots\to E_n\to 0$ in $B\operatorname{-Mod}$,
where $E_i\in\mathcal{E}$ for $0\leq i\leq n$.
Further, we have the exact sequence
$$0\to\begin{pmatrix}
	0\\Y'
\end{pmatrix}\to\begin{pmatrix}
	0\\E_0
\end{pmatrix}\to\begin{pmatrix}
	0\\E_1
\end{pmatrix}\to\cdots\to\begin{pmatrix}
	0\\E_n
\end{pmatrix}\to 0.$$
So $\begin{pmatrix}
	0\\Y'
\end{pmatrix}\in(\mathfrak{A}^\mathcal{C}_\mathcal{E})^\vee_n$.
Since $(\mathfrak{A}^\mathcal{C}_\mathcal{E})^\vee_n$ is closed under extensions, then
$\begin{pmatrix}
	X'\\Y'
\end{pmatrix}\in(\mathfrak{A}^\mathcal{C}_\mathcal{E})^\vee_n$.
Hence $\mathfrak{A}^{\mathcal{C}^\vee_n}_{\mathcal{E}^\vee_n}\subseteq(\mathfrak{A}^\mathcal{C}_\mathcal{E})^\vee_n$.

\noindent(2) The proof is similar to (1).
	
\noindent(3) For any $\begin{pmatrix}
		X\\Y
	\end{pmatrix}_f\in(\mathfrak{P}^\mathcal{C}_\mathcal{E})^\vee_n$,
there exists an exact sequence
$$ 0\to \begin{pmatrix}
	X\\Y
\end{pmatrix}_f\to\begin{pmatrix}
X_0\\Y_0
\end{pmatrix}_{f_0}\to\begin{pmatrix}
X_1\\Y_1
\end{pmatrix}_{f_1}\to\cdots\to\begin{pmatrix}
X_n\\Y_n
\end{pmatrix}_{f_n}\to0\ \ \ \ \ \ (\triangle)$$ in $\Lambda\operatorname{-Mod}$,
where $\begin{pmatrix}
	X_i\\Y_i
\end{pmatrix}_{f_i}\in\mathfrak{P}^\mathcal{C}_\mathcal{E}$ for $0\leq i\leq n$.
Then we have the exact sequence $$0\to X\to X_0\to X_1\to \cdots\to X_n\to 0$$
in $A\operatorname{-Mod}$, where $X_i\in\mathcal{C}$ for $0\leq i\leq n$.
So $X\in\mathcal{C}^\vee_n$.
Since $\begin{pmatrix}
	X_i\\Y_i
\end{pmatrix}_{f_i}\in\mathfrak{P}^\mathcal{C}_\mathcal{E}$, then
 $f_i$ is a monomorphism and $\operatorname{Coker}f_i\in\mathcal{E}$ for $0\leq i\leq n$.
Consider the following diagram:
$$ \xymatrix{0\ar[r] & U\otimes_A X  \ar[r] \ar@{^{(}->}[d]_f & U\otimes_A X_0 \ar@{^{(}->}[d]_{f_0} \ar[r] & U\otimes_A X_1 \ar@{^{(}->}[d]_{f_1} \ar[r] & \cdots \ar[r] & U\otimes_A X_{n-1}\ar@{^{(}->}[d]_{f_{n-1}}\ar[r]& U\otimes_A X_n\ar@{^{(}->}[d]_{f_n}\ar[r] & 0	\\
0\ar[r] & Y\ar@{->>}[d] \ar[r] & Y_0\ar@{->>}[d] \ar[r] & Y_1\ar@{->>}[d] \ar[r] & \cdots \ar[r] & Y_{n-1}\ar@{->>}[d]\ar[r]& Y_n\ar@{->>}[d]\ar[r] & 0\\
0\ar[r] & \operatorname{Coker}f \ar[r] & \operatorname{Coker}f_0 \ar[r] & \operatorname{Coker}f_1 \ar[r] & \cdots \ar[r] & \operatorname{Coker}f_{n-1}\ar[r]& \operatorname{Coker}f_n\ar[r] & 0}$$
Due to the exact sequence $(\triangle)$, the second row is exact and the upper squares are commutative.
By Proposition \ref{prop5}(1), the first row is exact.
In addition, since $f_0$ is a monomorphism, then $f$ is also a monomorphism.
Then the exact sequences in the first and second rows induces that the third row is also exact.
It follows that  $\operatorname{Coker}f\in\mathcal{E}^\vee_n$.
Hence $\begin{pmatrix}
	X\\Y
\end{pmatrix}_f\in\mathfrak{P}^{\mathcal{C}^\vee_n}_{\mathcal{E}^\vee_n}$
and $(\mathfrak{P}^\mathcal{C}_\mathcal{E})^\vee_n\subseteq\mathfrak{P}^{\mathcal{C}^\vee_n}_{\mathcal{E}^\vee_n}$.

\noindent(4) For any $\begin{pmatrix}
	X\\Y
\end{pmatrix}_f\in\mathfrak{P}^{\mathcal{C}^\vee_n}_{\mathcal{E}^\vee_n}$, we can see that $X\in\mathcal{C}^\vee_n$, $\operatorname{Coker}f\in\mathcal{E}^\vee_n$ and $f$ is a monomorphism.
Consider the following exact sequence:
$$0\to\begin{pmatrix}
X\\ U\otimes_A X
\end{pmatrix}_1\to \begin{pmatrix}
X\\Y
\end{pmatrix}_f\to\begin{pmatrix}
0\\ \operatorname{Coker}f
\end{pmatrix}_0\to 0 $$ in $\Lambda\operatorname{-Mod}$.
Since $\operatorname{Coker}f\in\mathcal{E}^\vee_n$, then there exists an exact sequence
$$0\to \operatorname{Coker}f\to E_0\to E_1\to\cdots\to E_n\to 0$$ with $E_i\in\mathcal{E}$ for $0\leq i\leq n$, which induces the exact sequence
$$0\to\begin{pmatrix}
0\\ \operatorname{Coker}f
\end{pmatrix}\to\begin{pmatrix}
0\\E_0
\end{pmatrix}\to\begin{pmatrix}
0\\E_1
\end{pmatrix}\to\cdots\to\begin{pmatrix}
0\\E_n
\end{pmatrix}\to 0 $$ in $\Lambda\operatorname{-Mod}$.
Clearly, $\begin{pmatrix}
	0\\E_i
\end{pmatrix}\in\mathfrak{P}^\mathcal{C}_\mathcal{E}$ for $0\leq i\leq n$.
So $\begin{pmatrix}
	0\\ \operatorname{Coker}f
\end{pmatrix}\in(\mathfrak{P}^\mathcal{C}_\mathcal{E})^\vee_n$.

Since $X\in\mathcal{C}^\vee_n$, there exists an exact sequence
$$0\to X\to C_0\to C_1\to \cdots \to C_n\to 0$$ in $A\operatorname{-Mod}$, where $C_i\in\mathcal{C}$.
By Proposition \ref{prop5}(1), there exists an exact sequence
$$0\to U\otimes_A X\to U\otimes_A C_0\to U\otimes_A C_1\to \cdots\to U\otimes_A C_n\to 0  $$ in $B\operatorname{-Mod}$.
Further, there exists the following exact sequence
$$0\to\begin{pmatrix}
	X\\U\otimes_A X
\end{pmatrix}\to\begin{pmatrix}
C_0\\U\otimes_A C_0
\end{pmatrix}\to\begin{pmatrix}
C_1\\U\otimes_A C_1
\end{pmatrix}\to \cdots\to\begin{pmatrix}
C_n\\U\otimes_A C_n
\end{pmatrix}\to 0$$ in $\Lambda\operatorname{-Mod}$.
Clearly, $\begin{pmatrix}
	C_i\\U\otimes_A C_i
\end{pmatrix}\in\mathfrak{P}^\mathcal{C}_\mathcal{E}$ for $0\leq i\leq n$.
So $\begin{pmatrix}
	X\\ U\otimes_A X
\end{pmatrix}\in(\mathfrak{P}^\mathcal{C}_\mathcal{E})^\vee_n$.
Since $(\mathfrak{P}^\mathcal{C}_\mathcal{E})^\vee_n$ is closed under extensions, then $\begin{pmatrix}
	X\\Y
\end{pmatrix}_f\in(\mathfrak{P}^\mathcal{C}_\mathcal{E})^\vee_n$.
Hence $\mathfrak{P}^{\mathcal{C}^\vee_n}_{\mathcal{E}^\vee_n}\subseteq
(\mathfrak{P}^\mathcal{C}_\mathcal{E})^\vee_n$

\noindent(5) The proof is similar to (3).

\noindent(6) The proof is similar to (4).
\end{proof}

\subsection{Left (right) $n$-cotorsion pairs over formal triangular matrix rings}
\label{Subsection 3.2}
	\begin{theorem}\label{main result1}
\noindent$(1)$Let $(\mathcal{C},\mathcal{D})$ be a right $n$-cotorsion pair in $A\operatorname{-Mod}$ and $(\mathcal{E},\mathcal{F})$ be a right $n$-cotorsion pair in $B\operatorname{-Mod}$.
		If $\operatorname{Tor}^A_j(U,\mathcal{C})=0$ for $1\leq j\leq n+1$, $\mathfrak{A}_\mathcal{F} ^\mathcal{D}$ is a coresolving class and $(\mathfrak{P}^\mathcal{C}_\mathcal{E})^\vee_{n-1}$ is
		closed under extensions, then $(\mathfrak{P}^\mathcal{C}_\mathcal{E},\mathfrak{A}^\mathcal{D}_\mathcal{F})$ is a right $n$-cotorsion pair in $\Lambda\operatorname{-Mod}$.
		
\noindent$(2)$Let $(\mathcal{C},\mathcal{D})$ be a left $n$-cotorsion pair in $A\operatorname{-Mod}$ and $(\mathcal{E},\mathcal{F})$ be a left $n$-cotorsion pair in $B\operatorname{-Mod}$.
If $\operatorname{Ext}_B^j(U,\mathcal{F})=0$ for $1\leq j\leq n+1$,
$\mathfrak{A}^\mathcal{C}_\mathcal{E}$ is a resolving class and $(\mathfrak{I}^\mathcal{D}_\mathcal{F})^\wedge_{n-1}$ is closed under extensions, then $(\mathfrak{A}^\mathcal{C}_\mathcal{E},\mathfrak{I}^\mathcal{D}_\mathcal{F})$ is a left $n$-cotorsion pair in in $\Lambda\operatorname{-Mod}$.

	\end{theorem}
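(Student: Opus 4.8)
The plan is to verify, in each part, the two conditions characterizing an $n$-cotorsion pair in Theorem~\ref{n-cotorsion pair2}: that the ``large'' class is the appropriate finite orthogonal of the ``small'' one, and that every $\Lambda$-module has the required approximation sequence.

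\emph{The orthogonality identity.} In part~(1), since $(\mathcal{C},\mathcal{D})$ and $(\mathcal{E},\mathcal{F})$ are right $n$-cotorsion pairs, Theorem~\ref{n-cotorsion pair2}(2) gives $\mathcal{D}=\mathcal{C}^{\perp_{[1,n]}}$ and $\mathcal{F}=\mathcal{E}^{\perp_{[1,n]}}$; plugging this into Lemma~\ref{perp formula}(1) (whose hypothesis $\operatorname{Tor}^A_j(U,\mathcal{C})=0$ for $1\leq j\leq n+1$ is assumed) yields $\bigcap_{i=1}^n(\mathfrak{P}^\mathcal{C}_\mathcal{E})^{\perp_i}=\mathfrak{A}^{\mathcal{C}^{\perp_{[1,n]}}}_{\mathcal{E}^{\perp_{[1,n]}}}=\mathfrak{A}^\mathcal{D}_\mathcal{F}$, which is then automatically closed under direct summands. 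Part~(2) is symmetric, using Theorem~\ref{n-cotorsion pair2}(1) (so $\mathcal{C}={}^{\perp_{[1,n]}}\mathcal{D}$, $\mathcal{E}={}^{\perp_{[1,n]}}\mathcal{F}$) and Lemma~\ref{perp formula}(3).

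\emph{The approximation sequence (part (1)).} Fix $M=(\begin{smallmatrix}M_1\\M_2\end{smallmatrix})_{\varphi^M}$ and split it as $0\to(\begin{smallmatrix}0\\M_2\end{smallmatrix})\to M\to(\begin{smallmatrix}M_1\\0\end{smallmatrix})\to0$. I would show each end has a special $\mathfrak{A}^\mathcal{D}_\mathcal{F}$-preenvelope whose cokernel lies in $(\mathfrak{P}^\mathcal{C}_\mathcal{E})^\vee_{n-1}$; note that $(\mathfrak{P}^\mathcal{C}_\mathcal{E})^\vee_{n-1}\subseteq{}^{\perp_1}(\mathfrak{A}^\mathcal{D}_\mathcal{F})$ by Proposition~\ref{prop2.3} together with the orthogonality identity, so these really are special preenvelopes. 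For $(\begin{smallmatrix}0\\M_2\end{smallmatrix})$ take a right approximation $0\to M_2\to F_2\to E_2\to0$ from $(\mathcal{E},\mathcal{F})$; then $0\to(\begin{smallmatrix}0\\M_2\end{smallmatrix})\to(\begin{smallmatrix}0\\F_2\end{smallmatrix})\to(\begin{smallmatrix}0\\E_2\end{smallmatrix})\to0$ works, since $(\begin{smallmatrix}0\\F_2\end{smallmatrix})\in\mathfrak{A}^\mathcal{D}_\mathcal{F}$ and applying $(\begin{smallmatrix}0\\-\end{smallmatrix})$ to an $\mathcal{E}$-coresolution of $E_2$ shows $(\begin{smallmatrix}0\\E_2\end{smallmatrix})\in(\mathfrak{P}^\mathcal{C}_\mathcal{E})^\vee_{n-1}$. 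The module $(\begin{smallmatrix}M_1\\0\end{smallmatrix})$ is the delicate one: take a right approximation $0\to M_1\xrightarrow{a}D_1\to C_1\to0$ from $(\mathcal{C},\mathcal{D})$; by Lemma~\ref{HMPLemma2.4}(2) one has $\operatorname{Tor}^A_1(U,\mathcal{C}^\vee_{n-1})=0$, so $1\otimes a$ is a monomorphism with cokernel $U\otimes_A C_1$, and there is a monomorphism $(\begin{smallmatrix}M_1\\0\end{smallmatrix})\hookrightarrow(\begin{smallmatrix}D_1\\U\otimes_A C_1\end{smallmatrix})$ (structure map the projection $U\otimes_A D_1\twoheadrightarrow U\otimes_A C_1$) whose cokernel, once the induced structure map is tracked, is $(\begin{smallmatrix}C_1\\U\otimes_A C_1\end{smallmatrix})_1$; Proposition~\ref{prop5}(1) applied to a $\mathcal{C}$-coresolution of $C_1$ puts this cokernel in $(\mathfrak{P}^\mathcal{C}_\mathcal{E})^\vee_{n-1}$. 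Now embed $U\otimes_A C_1$ into some $F\in\mathcal{F}$ with cokernel $E\in\mathcal{E}^\vee_{n-1}$ via $(\mathcal{E},\mathcal{F})$, giving $(\begin{smallmatrix}M_1\\0\end{smallmatrix})\hookrightarrow(\begin{smallmatrix}D_1\\F\end{smallmatrix})\in\mathfrak{A}^\mathcal{D}_\mathcal{F}$ whose cokernel fits into $0\to(\begin{smallmatrix}C_1\\U\otimes_A C_1\end{smallmatrix})_1\to(-)\to(\begin{smallmatrix}0\\E\end{smallmatrix})\to0$, hence lies in $(\mathfrak{P}^\mathcal{C}_\mathcal{E})^\vee_{n-1}$ because that class is closed under extensions. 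Finally, since $\mathfrak{A}^\mathcal{D}_\mathcal{F}$ is coresolving, Lemma~\ref{lemma5.4}(1) applied with these two special preenvelopes produces a special $\mathfrak{A}^\mathcal{D}_\mathcal{F}$-preenvelope $0\to M\to\mathbf{D}\to\mathbf{C}\to0$ with $\mathbf{C}$ an extension of the two cokernels above; one last use of closure under extensions gives $\mathbf{C}\in(\mathfrak{P}^\mathcal{C}_\mathcal{E})^\vee_{n-1}$, and Theorem~\ref{n-cotorsion pair2}(2) concludes part~(1). Part~(2) runs dually, with $U\otimes_A-$ replaced by $\operatorname{Hom}_B(U,-)$, pushouts by pullbacks, $\operatorname{Tor}^A_j(U,\mathcal{C})=0$ by $\operatorname{Ext}^j_B(U,\mathcal{F})=0$ (via Lemma~\ref{HMPLemma2.4}(1) and Proposition~\ref{prop5}(2)), and $\mathfrak{A}^\mathcal{D}_\mathcal{F}$ coresolving by $\mathfrak{A}^\mathcal{C}_\mathcal{E}$ resolving (Lemma~\ref{lemma5.4}(2)); here the easy coordinate is $(\begin{smallmatrix}M_1\\0\end{smallmatrix})$ and the delicate one is $(\begin{smallmatrix}0\\M_2\end{smallmatrix})$.

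\emph{The main obstacle} is the bookkeeping in this last step: one must identify precisely, as an object of $\mathrm{H}$ and not merely componentwise, the cokernel (resp.\ kernel) produced by the pushout (resp.\ pullback)—including its structure map—and then certify that the building blocks $(\begin{smallmatrix}C_1\\U\otimes_A C_1\end{smallmatrix})_1$, $(\begin{smallmatrix}0\\E\end{smallmatrix})$ (resp.\ their duals) genuinely belong to $(\mathfrak{P}^\mathcal{C}_\mathcal{E})^\vee_{n-1}$ (resp.\ $(\mathfrak{I}^\mathcal{D}_\mathcal{F})^\wedge_{n-1}$). This is exactly where the Tor- (resp.\ Ext-) vanishing on $U$ is needed, through Lemma~\ref{HMPLemma2.4} and Proposition~\ref{prop5}, which make $U\otimes_A-$ (resp.\ $\operatorname{Hom}_B(U,-)$) exact on the relevant (co)resolutions; and the repeated appeals to ``closed under extensions'' are precisely what let the two one-sided approximations be glued without escaping the class. (One could also bypass Lemma~\ref{lemma5.4} and handle $M$ in a single pushout/pullback built directly from an approximation of $M_1$, which dispenses with the (co)resolving hypothesis, but the route above stays within the toolkit of Section~\ref{Section 2}.)
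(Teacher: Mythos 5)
Your proposal is correct and follows essentially the same route as the paper's proof: identify $\bigcap_{i=1}^n(\mathfrak{P}^\mathcal{C}_\mathcal{E})^{\perp_i}=\mathfrak{A}^\mathcal{D}_\mathcal{F}$ via Lemma~\ref{perp formula} and Theorem~\ref{n-cotorsion pair2}, split $M$ as $0\to(\begin{smallmatrix}0\\M_2\end{smallmatrix})\to M\to(\begin{smallmatrix}M_1\\0\end{smallmatrix})\to 0$, build a special $\mathfrak{A}^\mathcal{D}_\mathcal{F}$-preenvelope of each end (for $(\begin{smallmatrix}M_1\\0\end{smallmatrix})$ the composite structure map $\lambda\circ(1\otimes\pi)$ you land on is exactly the $\varphi^N$ the paper writes down), and glue them with Lemma~\ref{lemma5.4}(1), invoking the coresolving hypothesis and closure of $(\mathfrak{P}^\mathcal{C}_\mathcal{E})^\vee_{n-1}$ under extensions.

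The only real stylistic deviation is that where the paper cites Lemma~\ref{lemma 6}(3)--(4) to conclude in one stroke that the cokernel $(\begin{smallmatrix}C_1\\F_1\end{smallmatrix})_\lambda$ lies in $(\mathfrak{P}^\mathcal{C}_\mathcal{E})^\vee_{n-1}$, you reprove that fact inline: you factor the cokernel as an extension $0\to(\begin{smallmatrix}C_1\\U\otimes_A C_1\end{smallmatrix})_1\to(-)\to(\begin{smallmatrix}0\\E\end{smallmatrix})\to 0$, use Proposition~\ref{prop5}(1) with Lemma~\ref{HMPLemma2.4}(2) to place the first piece in $(\mathfrak{P}^\mathcal{C}_\mathcal{E})^\vee_{n-1}$, put the second in directly, and close up with the extension hypothesis. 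This is precisely the content of the proof of Lemma~\ref{lemma 6}(4), so you are unpacking that lemma rather than citing it; both versions are sound and the bookkeeping you flag as ``the main obstacle'' (tracking the structure map on the cokernel of the pushout) is exactly the content that lemma was written to encapsulate.
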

	
	\begin{proof}
\noindent$(1)$ Since $(\mathcal{C},\mathcal{D})$ is a right $n$-cotorsion pair in  $A\operatorname{-Mod}$ and $(\mathcal{E},\mathcal{F})$ is a right $n$-cotorsion pair in $B\operatorname{-Mod}$, then we have $\bigcap\limits^n_{i=1}\mathcal{C}^{\perp_i}=\mathcal{D}$ and $\bigcap\limits^n_{i=1}\mathcal{E}^{\perp_i}=\mathcal{F}$ by Theorem \ref{n-cotorsion pair2}.
		Then $\bigcap\limits^n_{i=1}(\mathfrak{P}^\mathcal{C}_\mathcal{E})^{\perp_i}=\mathfrak{A}^{\mathcal{C}^{\perp_{[1,n]}}}_{\mathcal{E}^{\perp_{[1,n]}}}=\mathfrak{A}^\mathcal{D}_\mathcal{F}$ by Lemma \ref{perp formula}(1).
		And then we have $\operatorname{Ext}^1_\Lambda((\mathfrak{P}^\mathcal{C}_\mathcal{E})^\vee_{n-1},\mathfrak{A}^\mathcal{D}_\mathcal{F})=0$ and $(\mathfrak{P}^\mathcal{C}_\mathcal{E})^\vee_{n-1}\subseteq{}^{\perp_1}(\mathfrak{A}^\mathcal{D}_\mathcal{F})$ by Proposition \ref{prop2.3}.
		
		For any $\begin{pmatrix}
			M_1\\M_2
		\end{pmatrix}_{\varphi^M}\in\Lambda\operatorname{-Mod}$, there exists a short exact sequence
		$$0\to\begin{pmatrix}
			0\\M_2
		\end{pmatrix}\to\begin{pmatrix}
			M_1\\M_2
		\end{pmatrix}\to\begin{pmatrix}
			M_1\\0
		\end{pmatrix}\to 0$$
		in $\Lambda\operatorname{-Mod}$.
		Since $(\mathcal{C},\mathcal{D})$ is a right $n$-cotorsion pair in $A\operatorname{-Mod}$, there exists a short exact sequence
		$$0\to M_1\xrightarrow{l} D_1\xrightarrow{\pi} C_1\to 0$$ in $A\operatorname{-Mod}$, where $D_1\in\mathcal{D}$ and $C_1\in\mathcal{C}_{n-1}^\vee$.
Since $(\mathcal{E},\mathcal{F})$ is a right $n$-cotorsion pair in $B\operatorname{-Mod}$, there exists a short exact sequence $$0\to U\otimes_A C_1\xrightarrow{\lambda} F_1\to E_1\to 0$$ in $B\operatorname{-Mod}$, where $F_1\in\mathcal{F}$ and $E_1\in\mathcal{E}^\vee_{n-1}$.
Let $N:=\begin{pmatrix}
D_1\\F_1
\end{pmatrix}_{\varphi^N}$ with $\varphi^N=\lambda\circ(1\otimes \pi)$.
Then there exists a short exact sequence
$$0\to \begin{pmatrix}
	M_1\\0
\end{pmatrix}\xrightarrow{(\begin{smallmatrix}
	l\\0
	\end{smallmatrix})}
\begin{pmatrix}
D_1\\F_1
\end{pmatrix}_{\varphi^N}\xrightarrow{(\begin{smallmatrix}
	\pi\\1
	\end{smallmatrix})}
\begin{pmatrix}
	C_1\\F_1
\end{pmatrix}_\lambda\to 0 $$
where $N=\begin{pmatrix}
	D_1\\F_1
\end{pmatrix}_{\varphi^N}\in\mathfrak{A}^\mathcal{D}_\mathcal{F}$.
In addition, $\begin{pmatrix}
	C_1\\F_1
\end{pmatrix}_\lambda\in\mathfrak{P}^{\mathcal{C}^\vee_{n-1}}_{\mathcal{E}^\vee_{n-1}}=(\mathfrak{P}^\mathcal{C}_\mathcal{E})^\vee_{n-1}\subseteq{}^{\perp_1}(\mathfrak{A}^\mathcal{D}_\mathcal{F})$ by Lemma \ref{lemma 6} and Lemma \ref{HMPLemma2.4}(2).
So $\begin{pmatrix}
M_1\\0
\end{pmatrix}$
has a special $\mathfrak{A}^\mathcal{D}_\mathcal{F}$-preenvelope.
For $M_2\in B\operatorname{-Mod}$, there exists an exact sequence
$$0\to M_2\to F_2\to E_2\to 0$$ where $F_2\in\mathcal{F}$ and $E_2\in\mathcal{E}^\vee_{n-1}$.
And then there exists an exact sequence
$$0\to\begin{pmatrix}
	0\\M_2
\end{pmatrix}\to\begin{pmatrix}
0\\F_2
\end{pmatrix}\to\begin{pmatrix}
0\\E_2
\end{pmatrix}\to 0 $$
with $\begin{pmatrix}
	0\\F_2
\end{pmatrix}\in\mathfrak{A}^\mathcal{D}_\mathcal{F}$.
In addition,
$\begin{pmatrix}
0\\E_2
\end{pmatrix}\in\mathfrak{P}^{\mathcal{C}^\vee_{n-1}}_{\mathcal{E}^\vee_{n-1}}=(\mathfrak{P}^\mathcal{C}_\mathcal{E})^\vee_{n-1}\subseteq{}^{\perp_1}(\mathfrak{A}^\mathcal{D}_\mathcal{F})$ by Lemma \ref{lemma 6} and Lemma \ref{HMPLemma2.4}(2).
So $\begin{pmatrix}
	0\\M_2
\end{pmatrix}$
also has a special $\mathfrak{A}^\mathcal{D}_\mathcal{F}$-preenvelope.
By Lemma \ref{lemma5.4}, we have the following commutative diagram with exact rows and columns:
$$\xymatrix{ & 0\ar[d] & 0\ar[d] & 0\ar[d] & \\
0\ar[r] & {\begin{pmatrix}
	0\\M_2
	\end{pmatrix}} \ar[r]\ar[d] & {\begin{pmatrix}
	M_1\\M_2
\end{pmatrix}} \ar[r]\ar[d] & {\begin{pmatrix}
M_1\\0
\end{pmatrix}}\ar[r]\ar[d] & 0\\
0\ar[r] & {\begin{pmatrix}
		0\\F_2
\end{pmatrix}} \ar[r]\ar[d] & {\begin{pmatrix}
		D_1\\F'
\end{pmatrix}} \ar[r]\ar[d] & {\begin{pmatrix}
		D_1\\F_1
\end{pmatrix}}\ar[r]\ar[d] & 0\\
0\ar[r] & {\begin{pmatrix}
		0\\E_2
\end{pmatrix}} \ar[r]\ar[d] & {\begin{pmatrix}
		C_1\\Y
\end{pmatrix}} \ar[r]\ar[d] & {\begin{pmatrix}
		C_1\\F_1
\end{pmatrix}}\ar[r]\ar[d] & 0\\
 & 0 & 0 & 0 &} $$

\noindent Since $\mathcal{F}$ is closed under extensions, then $F'\in\mathcal{F}$ and $\begin{pmatrix}
	D_1\\F'
\end{pmatrix}\in\mathfrak{A}^\mathcal{D}_\mathcal{F}$.
Since $(\mathfrak{P}^\mathcal{C}_\mathcal{E})^\vee_{n-1}$ is closed under extensions, then $\begin{pmatrix}
	C_1\\Y
\end{pmatrix}\in(\mathfrak{P}^\mathcal{C}_\mathcal{E})^\vee_{n-1}$.
Hence $(\mathfrak{P}^\mathcal{C}_\mathcal{E},\mathfrak{A}^\mathcal{D}_\mathcal{F})$ is a right $n$-cotorsion pair in $\Lambda\operatorname{-Mod}$.\\
$(2)$ The proof is similar to (1).
\end{proof}

\begin{theorem}\label{main result3}
Let $\mathcal{C},\mathcal{D}$ be classes in $A\operatorname{-Mod}$ and $\mathcal{E},\mathcal{F}$ be classes in $B\operatorname{-Mod}$.\\
$(1)$ Assume $(\mathfrak{P}^\mathcal{C}_\mathcal{E},\mathfrak{A}^\mathcal{D}_\mathcal{F})$ is a right $n$-cotorsion pair in $\Lambda\operatorname{-Mod}$.

$(a)$ If $\operatorname{Tor}^A_j(U,\mathcal{C})=0$ for $1\leq j\leq n+1$, then $(\mathcal{C},\mathcal{D})$ is a right $n$-cotorsion pair in $A\operatorname{-Mod}$.

$(b)$ If $\operatorname{Tor}^A_j(U,\mathcal{C})=0$ for $1\leq j\leq n+1$, $\mathcal{E}^\vee_{n-1}$ is closed under extensions and $U\otimes_A \mathcal{D}\subseteq \mathcal{E}^\vee_{n-1}$, then $(\mathcal{E},\mathcal{F})$ is a right $n$-cotorsion pair in $B\operatorname{-Mod}$.\\
$(2)$ Assume $(\mathfrak{A}^\mathcal{C}_\mathcal{E},\mathfrak{I}^\mathcal{D}_\mathcal{F})$ is a left $n$-cotorsion pair in $\Lambda\operatorname{-Mod}$.

$(a)$ If $\operatorname{Ext}^j_B(U,\mathcal{F})=0$ for $1\leq j\leq n+1$, then $(\mathcal{E},\mathcal{F})$ is a left $n$-cotorsion pair in $B\operatorname{-Mod}$.

$(b)$ If $\operatorname{Ext}^j_B(U,\mathcal{F})=0$ for $1\leq j\leq n+1$, $\mathcal{D}^\wedge_{n-1}$ is closed under extensions and $\operatorname{Hom}_B(U,\mathcal{E})\subseteq\mathcal{D}^\wedge_{n-1}$, then $(\mathcal{C},\mathcal{D})$ is a left $n$-cotorsion pair in $A\operatorname{-Mod}$.
\end{theorem}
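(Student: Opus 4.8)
The plan is to prove part $(1)$; parts $(2)$ and the $(a)/(b)$ splits are handled by the dual/analogous arguments. Throughout I would exploit the two standard ``embedding'' functors $A\operatorname{-Mod}\to\Lambda\operatorname{-Mod}$ sending $M_1\mapsto\bigl(\begin{smallmatrix}M_1\\0\end{smallmatrix}\bigr)$ and $B\operatorname{-Mod}\to\Lambda\operatorname{-Mod}$ sending $M_2\mapsto\bigl(\begin{smallmatrix}0\\M_2\end{smallmatrix}\bigr)$, together with the functor $M_1\mapsto\bigl(\begin{smallmatrix}M_1\\ U\otimes_A M_1\end{smallmatrix}\bigr)_1$, and the $\operatorname{Ext}$-transfer formulas of Lemma \ref{extension formula}. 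The key observation is that, by Theorem \ref{n-cotorsion pair2}(2), a right $n$-cotorsion pair is determined by its right-hand side being $\bigcap_{i=1}^n\mathcal{C}^{\perp_i}$ plus the approximation condition $(c)$; so for $(1)(a)$ I would verify that $\mathcal{D}=\bigcap_{i=1}^n\mathcal{C}^{\perp_i}$ in $A\operatorname{-Mod}$ and then produce, for each $M_1\in A\operatorname{-Mod}$, a short exact sequence $0\to M_1\to D\to C\to 0$ with $D\in\mathcal{D}$, $C\in\mathcal{C}^\vee_{n-1}$.

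For $(1)(a)$, first I would check $\mathcal{D}\subseteq\bigcap_{i=1}^n\mathcal{C}^{\perp_i}$: given $D\in\mathcal D$ and $C\in\mathcal C$, the hypothesis $\operatorname{Tor}^A_j(U,\mathcal C)=0$ for $1\le j\le n+1$ gives, via Lemma \ref{extension formula}(3), $\operatorname{Ext}^i_A(C,D)\cong\operatorname{Ext}^i_\Lambda(\bigl(\begin{smallmatrix}C\\ U\otimes C\end{smallmatrix}\bigr),\bigl(\begin{smallmatrix}D\\0\end{smallmatrix}\bigr))$, which vanishes for $1\le i\le n$ because $\bigl(\begin{smallmatrix}C\\ U\otimes C\end{smallmatrix}\bigr)\in\mathfrak P^{\mathcal C}_{\mathcal E}$ and $\bigl(\begin{smallmatrix}D\\0\end{smallmatrix}\bigr)\in\mathfrak A^{\mathcal D}_{\mathcal F}$. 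For the reverse inclusion, take $X\in\bigcap_{i=1}^n\mathcal C^{\perp_i}$; applying the right $n$-cotorsion pair in $\Lambda\operatorname{-Mod}$ to $\bigl(\begin{smallmatrix}X\\0\end{smallmatrix}\bigr)$ yields $0\to\bigl(\begin{smallmatrix}X\\0\end{smallmatrix}\bigr)\to\bigl(\begin{smallmatrix}D_1\\ F_1\end{smallmatrix}\bigr)\to\bigl(\begin{smallmatrix}C_1\\ Y_1\end{smallmatrix}\bigr)\to 0$ with the middle term in $\mathfrak A^{\mathcal D}_{\mathcal F}$ and the cokernel in $(\mathfrak P^{\mathcal C}_{\mathcal E})^\vee_{n-1}$; looking at the $A$-component gives $0\to X\to D_1\to C_1\to 0$ with $D_1\in\mathcal D$ and $C_1\in\mathcal C^\vee_{n-1}$, and since $X\in\bigcap_{i=1}^n\mathcal C^{\perp_i}={}^{\perp_{[1,n]}}\mathcal D$ (again using $\operatorname{Ext}_A^i(X,D_1)\cong\operatorname{Ext}^i_\Lambda(\bigl(\begin{smallmatrix}X\\0\end{smallmatrix}\bigr),\bigl(\begin{smallmatrix}D_1\\0\end{smallmatrix}\bigr))$ via Lemma \ref{extension formula}(1)), the sequence splits after one step, forcing $X$ to be a direct summand of $D_1$; since $\mathcal D$ is closed under direct summands (right cotorsion pair axiom (a), inherited because $\mathfrak A^{\mathcal D}_{\mathcal F}$ is), $X\in\mathcal D$. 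The approximation sequence $(c)$ for $A\operatorname{-Mod}$ is exactly the $A$-component of the $\Lambda$-approximation of $\bigl(\begin{smallmatrix}M_1\\0\end{smallmatrix}\bigr)$, using that the $A$-component of a member of $(\mathfrak P^{\mathcal C}_{\mathcal E})^\vee_{n-1}$ lies in $\mathcal C^\vee_{n-1}$ by Lemma \ref{lemma 6}(3). Closure of $\mathcal D$ under summands also follows formally from closure of $\mathfrak A^{\mathcal D}_{\mathcal F}$ under summands by applying the embedding.

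For $(1)(b)$ the strategy is the same but now reading off the $B$-component. To see $\mathcal F=\bigcap_{i=1}^n\mathcal E^{\perp_i}$: the inclusion $\mathcal F\subseteq\bigcap_{i=1}^n\mathcal E^{\perp_i}$ comes from $\operatorname{Ext}^i_B(E,F)\cong\operatorname{Ext}^i_\Lambda(\bigl(\begin{smallmatrix}0\\E\end{smallmatrix}\bigr),\bigl(\begin{smallmatrix}0\\F\end{smallmatrix}\bigr))=0$ by Lemma \ref{extension formula}(2), using $\bigl(\begin{smallmatrix}0\\E\end{smallmatrix}\bigr)\in\mathfrak P^{\mathcal C}_{\mathcal E}$ and $\bigl(\begin{smallmatrix}0\\F\end{smallmatrix}\bigr)\in\mathfrak A^{\mathcal D}_{\mathcal F}$. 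For the reverse inclusion and for the approximation sequence in $B\operatorname{-Mod}$, I would feed $\bigl(\begin{smallmatrix}0\\ M_2\end{smallmatrix}\bigr)$ into the $\Lambda$-cotorsion pair to get $0\to\bigl(\begin{smallmatrix}0\\M_2\end{smallmatrix}\bigr)\to\bigl(\begin{smallmatrix}D\\F\end{smallmatrix}\bigr)\to\bigl(\begin{smallmatrix}C\\Y\end{smallmatrix}\bigr)\to0$ and take the $B$-component $0\to M_2\to F\to Y\to 0$ with $F\in\mathcal F$; the point that $Y\in\mathcal E^\vee_{n-1}$ is where the extra hypotheses enter — $\bigl(\begin{smallmatrix}C\\Y\end{smallmatrix}\bigr)\in(\mathfrak P^{\mathcal C}_{\mathcal E})^\vee_{n-1}$, and to pin down its $B$-component one resolves $\bigl(\begin{smallmatrix}C\\Y\end{smallmatrix}\bigr)$ by members of $\mathfrak P^{\mathcal C}_{\mathcal E}$, uses the snake-type argument of Lemma \ref{lemma 6}(3) together with $U\otimes_A\mathcal D\subseteq\mathcal E^\vee_{n-1}$ (to handle the $U\otimes_A(-)$ of the $A$-resolution terms sitting inside the $B$-entries) and closure of $\mathcal E^\vee_{n-1}$ under extensions to splice the resulting short exact sequences into a length-$(n-1)$ $\mathcal E$-coresolution of $Y$. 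That splicing, controlling the $B$-entry $Y$ of the middle cokernel term, is the main obstacle; the rest is bookkeeping with Lemma \ref{extension formula} and the embedding functors. Closure of $\mathcal F$ under direct summands is again inherited from closure of $\mathfrak A^{\mathcal D}_{\mathcal F}$ under summands via the embedding $F\mapsto\bigl(\begin{smallmatrix}0\\F\end{smallmatrix}\bigr)$.
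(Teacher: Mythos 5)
Your scaffolding matches the paper's: feed $\bigl(\begin{smallmatrix}M_1\\0\end{smallmatrix}\bigr)$ and $\bigl(\begin{smallmatrix}0\\M_2\end{smallmatrix}\bigr)$ through the $\Lambda$-cotorsion pair, read off components, and use Lemma~\ref{extension formula} and Lemma~\ref{lemma 6}(3) to relate the $\Lambda$-classes to the $A$- and $B$-classes. But there are two places where your reasoning does not close the gap.

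First, in the reverse inclusion $\bigcap_{i=1}^n\mathcal{C}^{\perp_i}\subseteq\mathcal{D}$ you try to split $0\to X\to D_1\to C_1\to0$ by computing $\operatorname{Ext}^i_A(X,D_1)$; that is the wrong $\operatorname{Ext}$ group (and the asserted identity $\bigcap_i\mathcal{C}^{\perp_i}={}^{\perp_{[1,n]}}\mathcal{D}$ reverses the orientation of $\perp$). To split this sequence you need $\operatorname{Ext}^1_A(C_1,X)=0$, which does hold by Proposition~\ref{prop2.3} applied to $X\in\bigcap_i\mathcal{C}^{\perp_i}$ and $C_1\in\mathcal{C}^\vee_{n-1}$ --- so your conclusion can be salvaged, but your justification is incorrect. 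The paper avoids the splitting argument entirely: Lemma~\ref{perp formula}(1) gives $\mathfrak{A}^{\mathcal{C}^{\perp_{[1,n]}}}_{\mathcal{E}^{\perp_{[1,n]}}}=\bigcap_{i=1}^n(\mathfrak{P}^\mathcal{C}_\mathcal{E})^{\perp_i}$, and Theorem~\ref{n-cotorsion pair2}(2) identifies the latter with $\mathfrak{A}^\mathcal{D}_\mathcal{F}$, which yields both $\bigcap_i\mathcal{C}^{\perp_i}=\mathcal{D}$ and $\bigcap_i\mathcal{E}^{\perp_i}=\mathcal{F}$ in one stroke.

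Second, in part $(1)(b)$ your description of why the $B$-component of the cokernel $\bigl(\begin{smallmatrix}X_2\\Y_2\end{smallmatrix}\bigr)$ lies in $\mathcal{E}^\vee_{n-1}$ is vague at exactly the decisive point and does not obviously go through: coresolving $\bigl(\begin{smallmatrix}X_2\\Y_2\end{smallmatrix}\bigr)$ by $\mathfrak{P}^\mathcal{C}_\mathcal{E}$ gives $B$-entries that are extensions of $U\otimes_A C_i$ by $\mathcal{E}$-modules, and you have no control over the $C_i$. The paper's argument relies on a concrete observation you never state: the $A$-component of the approximation sequence for $\bigl(\begin{smallmatrix}0\\M_2\end{smallmatrix}\bigr)$ is $0\to 0\to X_1\to X_2\to 0$, so $X_2\cong X_1\in\mathcal{D}$, hence $U\otimes_A X_2\in U\otimes_A\mathcal{D}\subseteq\mathcal{E}^\vee_{n-1}$; combined with $\operatorname{Coker}(U\otimes_A X_2\to Y_2)\in\mathcal{E}^\vee_{n-1}$ (from $\bigl(\begin{smallmatrix}X_2\\Y_2\end{smallmatrix}\bigr)\in\mathfrak{P}^{\mathcal{C}^\vee_{n-1}}_{\mathcal{E}^\vee_{n-1}}$ via Lemma~\ref{lemma 6}(3)) and extension-closure of $\mathcal{E}^\vee_{n-1}$, this gives $Y_2\in\mathcal{E}^\vee_{n-1}$. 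Without the isomorphism $X_2\cong X_1$, the hypothesis $U\otimes_A\mathcal{D}\subseteq\mathcal{E}^\vee_{n-1}$ has no way to enter.
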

\begin{proof}
(1) (a) For any $C\in\mathcal{C}, D\in\mathcal{D}$ and $1\leq j\leq n$, we have $\operatorname{Ext}_A^j(C,D)\cong\operatorname{Ext}^j_\Lambda(\begin{pmatrix}
	C\\U\otimes_A C
\end{pmatrix},\begin{pmatrix}
D\\0
\end{pmatrix})=0$ by Lemma \ref{extension formula}(3). So $\mathcal{D}\subseteq\bigcap\limits_{i=1}^n\mathcal{C}^{\perp_i}$.
For any $N\in\bigcap\limits_{i=1}^n\mathcal{C}^{\perp_i}$, we have $\begin{pmatrix}
	N\\0
\end{pmatrix}\in\mathfrak{A}^{\mathcal{C}^{\perp_{[1,n]}}}_{\mathcal{E}^{\perp_{[1,n]}}}=\bigcap\limits_{i=1}^n(\mathfrak{P}^\mathcal{C}_\mathcal{E})^{\perp_i}=\mathfrak{A}^\mathcal{D}_\mathcal{F}$ by Lemma \ref{perp formula}(1) and Theorem \ref{n-cotorsion pair2}(2).
So $N\in\mathcal{D}$ and $\bigcap\limits_{i=1}^n\mathcal{C}^{\perp_i}\subseteq\mathcal{D}$.
Hence $\bigcap\limits_{i=1}^n\mathcal{C}^{\perp_i}=\mathcal{D}$.

For any $M\in A\operatorname{-Mod}$, since $(\mathfrak{P}^\mathcal{C}_\mathcal{E},\mathfrak{A}^\mathcal{D}_\mathcal{F})$ is a right $n$-cotorsion pair in $\Lambda\operatorname{-Mod}$, then there exists an exact sequence
$0\to\begin{pmatrix}
	M\\0
\end{pmatrix}\to\begin{pmatrix}
X\\Y
\end{pmatrix}_f\to\begin{pmatrix}
X'\\Y'
\end{pmatrix}_{f'}\to 0$
in $\Lambda\operatorname{-Mod}$, where $\begin{pmatrix}
	X\\Y
\end{pmatrix}\in\mathfrak{A}^\mathcal{D}_\mathcal{F}$ and $\begin{pmatrix}
X'\\Y'
\end{pmatrix}\in(\mathfrak{P}^\mathcal{C}_\mathcal{E})^\vee_{n-1}$.
In addition, $\begin{pmatrix}
	X'\\Y'
\end{pmatrix}\in(\mathfrak{P}^\mathcal{C}_\mathcal{E})^\vee_{n-1}\subseteq\mathfrak{P}^{\mathcal{C}^\vee_{n-1}}_{\mathcal{E}^\vee_{n-1}}$ by Lemma \ref{lemma 6}(3) and Lemma \ref{HMPLemma2.4}(2).
Then we have the short exact sequence $0\to M\to X\to X'\to 0$ in $A\operatorname{-Mod}$, where  $X\in\mathcal{D}$ and $X'\in\mathcal{C}^\vee_{n-1}$.
Hence $(\mathcal{C},\mathcal{D})$ is a right $n$-cotorsion pair in $A\operatorname{-Mod}$.

(b) For any $E\in\mathcal{E}$, $F\in\mathcal{F}$ and $1\leq i\leq n$, we have $\operatorname{Ext}_B^i(E,F)\cong\operatorname{Ext}^i_\Lambda(\begin{pmatrix}
	0\\E
\end{pmatrix},\begin{pmatrix}
0\\F
\end{pmatrix})=0$ by Lemma \ref{extension formula}(2).
So $\mathcal{F}\subseteq\bigcap\limits_{i=1}^n\mathcal{E}^{\perp_i}$.
For any $X\in\bigcap\limits_{i=1}^n\mathcal{E}^{\perp_i}$, we have $\begin{pmatrix}
	0\\X
\end{pmatrix}\in\mathfrak{A}^{\mathcal{C}^{\perp_[1,n]}}_{\mathcal{E}^{\perp_[1,n]}}=\bigcap\limits_{i=1}^n(\mathfrak{P}^\mathcal{C}_\mathcal{E})^{\perp_i}=\mathfrak{A}^\mathcal{D}_\mathcal{F}$ by Lemma \ref{perp formula}(1). So $X\in\mathcal{F}$ and $\bigcap\limits_{i=1}^n\mathcal{E}^{\perp_i}\subseteq\mathcal{F}$.
Hence $\bigcap\limits_{i=1}^n\mathcal{E}^{\perp_i}=\mathcal{F}$.

For any $M\in B\operatorname{-Mod}$, there exists an exact sequence
$$0\to\begin{pmatrix}
	0\\M
\end{pmatrix}\to\begin{pmatrix}
X_1\\Y_1
\end{pmatrix}_{f_1}\to\begin{pmatrix}
X_2\\Y_2
\end{pmatrix}_{f_2}\to 0,$$
where $\begin{pmatrix}
	X_1\\Y_1
\end{pmatrix}\in\mathfrak{A}^\mathcal{D}_\mathcal{F}$ and $\begin{pmatrix}
X_2\\Y_2
\end{pmatrix}\in(\mathfrak{P}^\mathcal{C}_\mathcal{E})^\vee_{n-1}$.
In addition, $\begin{pmatrix}
	X_2\\Y_2
\end{pmatrix}\in(\mathfrak{P}^\mathcal{C}_\mathcal{E})^\vee_{n-1}\subseteq\mathfrak{P}^{\mathcal{C}^\vee_{n-1}}_{\mathcal{E}^\vee_{n-1}}$ by Lemma \ref{lemma 6}(3) and Lemma \ref{HMPLemma2.4}(2).
Then we have the exact sequence $$0\to U\otimes_A X_2\xrightarrow{f_2}Y_2\to \operatorname{Coker}f_2\to 0$$
in $B\operatorname{-Mod}$, where $\operatorname{Coker}f_2\in\mathcal{E}^\vee_{n-1}$.
By the condition $U\otimes_A \mathcal{D}\subseteq \mathcal{E}^\vee_{n-1}$, then $U\otimes_A X_2\cong U\otimes_A X_1\in \mathcal{E}^\vee_{n-1}$. Since $ \mathcal{E}^\vee_{n-1}$ is closed under extension, then $Y_2\in\mathcal{E}^\vee_{n-1}$. So there exists an exact sequence
$$0\to M\to Y_1\to Y_2\to 0$$ in $B\operatorname{-Mod}$ with $Y_1\in \mathcal{F}$ and $Y_2\in\mathcal{E}^\vee_{n-1}$.
Hence $(\mathcal{E},\mathcal{F})$ is a right $n$-cotorsion pair in $B\operatorname{-Mod}$.\\
(2) The proof is similar to (1).
\end{proof}

\subsection{Hereditary left (right) $n$-cotorsion pairs}
\label{Subsection 3.3}

\begin{definition}\rm\cite[Definition 4.8]{HMP2021}
Let $R$ be a ring.
A right (left) $n$-cotorsion pair $(\mathcal{C},\mathcal{D})$ in $R\operatorname{-Mod}$ is \emph{hereditary} if $\operatorname{Ext}^{n+1}_R(\mathcal{C},\mathcal{D})=0$.
\end{definition}
We give a completion of \cite[Proposition 4.9]{HMP2021} in the following lemma.
\begin{lemma}\label{hereditary equiv}
Let $R$ be a ring.\\
$(1)$Let $(\mathcal{C},\mathcal{D})$ be a right $n$-cotorsion pair in $R\operatorname{-Mod}$. Then the following conditions are equivalent:

\emph{(a)} $\operatorname{Ext}^i_R(\mathcal{C},\mathcal{D})=0$ for $i\geq 1$.

\emph{(b)} $(\mathcal{C},\mathcal{D})$ is hereditary, i.e., $\operatorname{Ext}_R^{n+1}(\mathcal{C},\mathcal{D})=0$.

\emph{(c)} $\mathcal{D}$ is a coresolving class in $R\operatorname{-Mod}$.

\emph{(d)} $\mathcal{D}$ is closed under cokernels of monomorphisms.\\
$(2)$Let $(\mathcal{C},\mathcal{D})$ be a left $n$-cotorsion pair in $R\operatorname{-Mod}$. Then the following conditions are equivalent:

\emph{(a)} $\operatorname{Ext}^i_R(\mathcal{C},\mathcal{D})=0$ for $i\geq 1$.

\emph{(b)} $(\mathcal{C},\mathcal{D})$ is hereditary, i.e., $\operatorname{Ext}_R^{n+1}(\mathcal{C},\mathcal{D})=0$.

\emph{(c)} $\mathcal{C}$ is a resolving class in $R\operatorname{-Mod}$.

\emph{(d)} $\mathcal{C}$ is closed under kernels of epimorphisms.
\end{lemma}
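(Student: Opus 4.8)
The plan is to prove part $(1)$ in full and then obtain part $(2)$ by a dual argument. For part $(1)$, I would establish the cycle of implications $(a)\Rightarrow(b)\Rightarrow(d)\Rightarrow(c)\Rightarrow(a)$, which is the most economical route since $(b)$ is a weakening of $(a)$ and $(c)$ is a strengthening of $(d)$. The implication $(a)\Rightarrow(b)$ is trivial, as $\operatorname{Ext}^{n+1}_R(\mathcal{C},\mathcal{D})=0$ is just the special case $i=n+1$ of $(a)$.

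For $(b)\Rightarrow(d)$: given a short exact sequence $0\to D'\to D\to D''\to 0$ with $D',D\in\mathcal{D}$, I want $D''\in\mathcal{D}$. Since $(\mathcal{C},\mathcal{D})$ is a right $n$-cotorsion pair, Theorem \ref{n-cotorsion pair2}(2) gives $\mathcal{D}=\bigcap_{i=1}^n\mathcal{C}^{\perp_i}$, so it suffices to show $\operatorname{Ext}^i_R(\mathcal{C},D'')=0$ for $1\leq i\leq n$. For any $C\in\mathcal{C}$, the long exact sequence in $\operatorname{Ext}_R(C,-)$ attached to the above short exact sequence gives, for each such $i$, the segment $\operatorname{Ext}^i_R(C,D)\to\operatorname{Ext}^i_R(C,D'')\to\operatorname{Ext}^{i+1}_R(C,D')$; the left term vanishes because $D\in\mathcal{D}$, and the right term vanishes because $D'\in\mathcal{D}$ when $i+1\leq n$, while for $i=n$ the right term is $\operatorname{Ext}^{n+1}_R(C,D')=0$ by hypothesis $(b)$. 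Hence $\operatorname{Ext}^i_R(C,D'')=0$ for $1\leq i\leq n$, so $D''\in\mathcal{D}$.

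For $(d)\Rightarrow(c)$: I must check that $\mathcal{D}$ is closed under extensions and contains all injectives, since closure under cokernels of monomorphisms is $(d)$. Closure under extensions follows from $\mathcal{D}=\bigcap_{i=1}^n\mathcal{C}^{\perp_i}$ together with the long exact sequence argument (an extension of two objects in $\mathcal{C}^{\perp_i}$ lies in $\mathcal{C}^{\perp_i}$). For the injectives: any injective $R$-module $I$ satisfies $\operatorname{Ext}^i_R(\mathcal{C},I)=0$ for all $i\geq 1$, hence $I\in\bigcap_{i=1}^n\mathcal{C}^{\perp_i}=\mathcal{D}$. Finally, for $(c)\Rightarrow(a)$: this is the main obstacle, and it is where the right $n$-cotorsion pair structure is essential. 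Given $C\in\mathcal{C}$ and $D\in\mathcal{D}$, I want $\operatorname{Ext}^i_R(C,D)=0$ for all $i\geq 1$; the cases $1\leq i\leq n$ are automatic, so I induct on $i\geq n$. Using condition (c) of the right $n$-cotorsion pair (Definition \ref{n-cotorsion pair}(2)) applied to $D$, take a short exact sequence $0\to D\to D_0\to C_0\to 0$ with $D_0\in\mathcal{D}$ and $C_0\in\mathcal{C}^\vee_{n-1}$; since $\mathcal{D}$ is coresolving, the cokernel term together with $D_0\in\mathcal{D}$ lets one dimension-shift: $\operatorname{Ext}^{i+1}_R(C,D)\cong\operatorname{Ext}^i_R(C,C_0)$ via the long exact sequence, provided one controls $\operatorname{Ext}^i_R(C,D_0)$. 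The subtlety is that $C_0$ is not in $\mathcal{D}$; instead one uses that $C_0\in\mathcal{C}^\vee_{n-1}$ and that $\mathcal{D}$ being coresolving forces $\operatorname{coresdim}_{\mathcal{C}}$-type objects to interact well with $\operatorname{Ext}_R(\mathcal{C},-)$—concretely, one shows by a finite induction along the $\mathcal{C}$-coresolution of $C_0$ (each term of which is in $\mathcal{C}$, hence admits its own embedding into $\mathcal{D}$) that $\operatorname{Ext}^i_R(C,C_0)$ eventually reduces to $\operatorname{Ext}^j_R(C,-)$ with the second variable in $\mathcal{D}$ and $j$ in the vanishing range $[1,n]$. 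Assembling these reductions gives $\operatorname{Ext}^i_R(C,D)=0$ for all $i\geq 1$, completing the cycle. Part $(2)$ then follows by the evident dualization, replacing coresolving by resolving, cokernels of monomorphisms by kernels of epimorphisms, injectives by projectives, the characterization $\mathcal{C}=\bigcap_{i=1}^n{}^{\perp_i}\mathcal{D}$ from Theorem \ref{n-cotorsion pair2}(1), and the left $n$-cotorsion pair approximation sequences from Definition \ref{n-cotorsion pair}(1).
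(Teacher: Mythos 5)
Your cycle $(a)\Rightarrow(b)\Rightarrow(d)\Rightarrow(c)\Rightarrow(a)$ is the mirror image of the paper's $(a)\Rightarrow(b)\Rightarrow(c)\Rightarrow(d)\Rightarrow(a)$, and your $(b)\Rightarrow(d)$ and $(d)\Rightarrow(c)$ steps are correct and complete. The problem is the closing implication $(c)\Rightarrow(a)$, which you openly flag as ``the main obstacle'' and then do not actually finish. Your proposed route is to apply the approximation sequence $0\to D\to D_0\to C_0\to 0$ from Definition \ref{n-cotorsion pair}(2) to $D$ and dimension-shift. But the resulting long exact sequence
$\operatorname{Ext}^{i}_R(C,C_0)\to\operatorname{Ext}^{i+1}_R(C,D)\to\operatorname{Ext}^{i+1}_R(C,D_0)$
requires controlling $\operatorname{Ext}^{i+1}_R(C,D_0)$ with $D_0\in\mathcal{D}$ and $i+1>n$ --- which is precisely the statement you are inducting toward, so the argument is circular. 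Worse, the other boundary term $\operatorname{Ext}^i_R(C,C_0)$ with $C_0\in\mathcal{C}^\vee_{n-1}$ is not controlled by any of the available vanishing statements either (Proposition \ref{prop2.3} gives $\operatorname{Ext}^1_R(\mathcal{C}^\vee_{n-1},\mathcal{D})=0$, which is the wrong direction). The gesture toward ``a finite induction along the $\mathcal{C}$-coresolution of $C_0$'' does not rescue this, because each coresolution term is in $\mathcal{C}$, not $\mathcal{D}$, and nothing in the hypotheses bounds $\operatorname{Ext}^{>n}_R(\mathcal{C},\mathcal{C})$.

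The paper closes its cycle via a much more elementary dimension shift that does not touch the approximation sequences at all: for part $(2)$, given $\mathcal{C}$ closed under kernels of epimorphisms (hence containing syzygies, since projectives lie in $\mathcal{C}=\bigcap_{i=1}^n{}^{\perp_i}\mathcal{D}$), one has $\operatorname{Ext}^{n+j}_R(C,D)\cong\operatorname{Ext}^n_R(\Omega^j(C),D)=0$. The analogous fix for your $(c)\Rightarrow(a)$ in part $(1)$ is to take a cosyzygy of $D$: embed $0\to D\to I\to D'\to 0$ with $I$ injective; since $\mathcal{D}$ is coresolving, $I\in\mathcal{D}$ and $D'\in\mathcal{D}$, and the long exact sequence gives $\operatorname{Ext}^{n+1}_R(C,D)\cong\operatorname{Ext}^n_R(C,D')=0$, then iterate. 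Replacing your final paragraph with this two-line cosyzygy argument would make the proposal correct; as written, $(c)\Rightarrow(a)$ is a genuine gap.
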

\begin{proof}
As the proof of (1) is similar to (2), we only prove (2).

It is easy to see that (a) implies (b). And due to \cite[Proposition 4.9]{HMP2021}, (b) implies (c). By the definition of the resolving class, (c) implies (d).
It remains to show (d) implies (a).

Let $C\in\mathcal{C}$ and $D\in\mathcal{D}$.
Since $(\mathcal{C},\mathcal{D})$ is a left $n$-cotorsion pair, then $\operatorname{Ext}_R^i(C,D)=0$ for $1\leq i\leq n$.
For any $j\geq 1$, we have
$\operatorname{Ext}^{n+j}_R(C,D)\cong \operatorname{Ext}^n_R(\Omega^j(C),D)$ by dimension shifting, where $\Omega^j(C)$ is the $j$-th syzygy of $C$.
Since $\mathcal{C}$ is closed under kernels of epimorphisms and all projective $R$-modules belong to $\mathcal{C}$, then $\Omega^j(C)\in\mathcal{C}$. So $\operatorname{Ext}^n_R(\Omega^j(C),D)=0$ for $j\geq 1$.
Hence $\operatorname{Ext}^i_R(\mathcal{C},\mathcal{D})=0$ for $i\geq 1$.
\end{proof}
\begin{lemma}\label{hereditary eq2}
$(1)$ Let $(\mathcal{C},\mathcal{D})$ be a right $n$-cotorsion pair in $A\operatorname{-Mod}$ and $(\mathcal{E},\mathcal{F})$ be a right $n$-cotorsion pair in $B\operatorname{-Mod}$.
Assume $\operatorname{Tor}^A_i(U,\mathcal{C})=0$ for $1\leq i\leq n$.
Then $\mathfrak{A}^\mathcal{D}_\mathcal{F}$ is a coresolving class in $\Lambda\operatorname{-Mod}$ if and only if
$(\mathcal{C},\mathcal{D})$ and $(\mathcal{E},\mathcal{F})$ are both hereditary.\\
$(2)$ Let $(\mathcal{C},\mathcal{D})$ be a left $n$-cotorsion pair in $A\operatorname{-Mod}$ and $(\mathcal{E},\mathcal{F})$ be a left $n$-cotorsion pair in $B\operatorname{-Mod}$.
Assume $\operatorname{Ext}^i_B(U,\mathcal{F})=0$ for $1\leq i\leq n$.
Then $\mathfrak{A}^\mathcal{C}_\mathcal{E}$ is a resolving class in $\Lambda\operatorname{-Mod}$ if and only if
$(\mathcal{C},\mathcal{D})$ and $(\mathcal{E},\mathcal{F})$ are both hereditary.
\end{lemma}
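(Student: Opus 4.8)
The plan is to prove only part (2), since (1) is entirely dual (swap $\mathfrak{A}^\mathcal{D}_\mathcal{F}$-coresolving for $\mathfrak{A}^\mathcal{C}_\mathcal{E}$-resolving, $\operatorname{Ext}^i_B(U,\mathcal{F})$ for $\operatorname{Tor}^A_i(U,\mathcal{C})$, Lemma~\ref{hereditary equiv}(1) for (2), etc.). For (2), I will establish both implications of the ``if and only if'' by exploiting Lemma~\ref{hereditary equiv}(2), which says that a left $n$-cotorsion pair $(\mathcal{C},\mathcal{D})$ (resp. $(\mathcal{E},\mathcal{F})$) is hereditary iff $\mathcal{C}$ (resp. $\mathcal{E}$) is closed under kernels of epimorphisms, together with the observation from the preliminaries that a sequence $0\to X\to Y\to Z\to 0$ in $\Lambda\operatorname{-Mod}$ is exact iff the componentwise sequences $0\to X_1\to Y_1\to Z_1\to 0$ in $A\operatorname{-Mod}$ and $0\to X_2\to Y_2\to Z_2\to 0$ in $B\operatorname{-Mod}$ are exact.

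\textbf{($\Leftarrow$).} Assume $(\mathcal{C},\mathcal{D})$ and $(\mathcal{E},\mathcal{F})$ are hereditary. First I check $\mathfrak{A}^\mathcal{C}_\mathcal{E}$ contains all projective left $\Lambda$-modules: by Lemma~\ref{projective and injective}(1) a projective $P=\begin{psmallmatrix}P_1\\P_2\end{psmallmatrix}_f$ has $P_1$ projective over $A$ and $\operatorname{Coker}f$ projective over $B$, with $f$ mono; since projectives lie in $\mathcal{C}$ and in $\mathcal{E}$, and $\mathcal{E}$ is resolving (hence closed under extensions), the extension $0\to U\otimes_A P_1\to P_2\to\operatorname{Coker}f\to 0$ — where $U\otimes_A P_1$ is a summand of a free, hence in $\mathcal{E}$ — shows $P_2\in\mathcal{E}$, so $P\in\mathfrak{A}^\mathcal{C}_\mathcal{E}$. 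Closure of $\mathfrak{A}^\mathcal{C}_\mathcal{E}$ under extensions is immediate since $\mathcal{C}$ and $\mathcal{E}$ are each closed under extensions and exactness in $\Lambda\operatorname{-Mod}$ is componentwise. For closure under kernels of epimorphisms, given an exact sequence $0\to K\to M\to N\to 0$ in $\Lambda\operatorname{-Mod}$ with $M,N\in\mathfrak{A}^\mathcal{C}_\mathcal{E}$, the componentwise sequences $0\to K_1\to M_1\to N_1\to 0$ and $0\to K_2\to M_2\to N_2\to 0$ are exact; since $\mathcal{C}$ is resolving it is closed under kernels of epimorphisms, so $K_1\in\mathcal{C}$, and likewise $K_2\in\mathcal{E}$; hence $K\in\mathfrak{A}^\mathcal{C}_\mathcal{E}$. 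Thus $\mathfrak{A}^\mathcal{C}_\mathcal{E}$ is resolving.

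\textbf{($\Rightarrow$).} Conversely assume $\mathfrak{A}^\mathcal{C}_\mathcal{E}$ is resolving; I must show $\mathcal{C}$ and $\mathcal{E}$ are each closed under kernels of epimorphisms. Given $0\to K_1\to M_1\to N_1\to 0$ exact in $A\operatorname{-Mod}$ with $M_1,N_1\in\mathcal{C}$, apply the fully faithful exact embedding $X_1\mapsto\begin{psmallmatrix}X_1\\0\end{psmallmatrix}$: this gives an exact sequence $0\to\begin{psmallmatrix}K_1\\0\end{psmallmatrix}\to\begin{psmallmatrix}M_1\\0\end{psmallmatrix}\to\begin{psmallmatrix}N_1\\0\end{psmallmatrix}\to 0$ in $\Lambda\operatorname{-Mod}$ whose outer terms lie in $\mathfrak{A}^\mathcal{C}_\mathcal{E}$ (using $0\in\mathcal{E}$), so the kernel does too, forcing $K_1\in\mathcal{C}$. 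The argument for $\mathcal{E}$ is the same with the embedding $X_2\mapsto\begin{psmallmatrix}0\\X_2\end{psmallmatrix}$ (using $0\in\mathcal{C}$). By Lemma~\ref{hereditary equiv}(2) this makes both $(\mathcal{C},\mathcal{D})$ and $(\mathcal{E},\mathcal{F})$ hereditary.

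\textbf{Main obstacle.} The only genuinely non-formal point is verifying that projectives land in $\mathfrak{A}^\mathcal{C}_\mathcal{E}$ in the ($\Leftarrow$) direction, which is where the hypotheses $\operatorname{Ext}^i_B(U,\mathcal{F})=0$ and hereditariness (equivalently: $\mathcal{E}$ resolving, so closed under extensions) are actually used — one needs the extension $0\to U\otimes_A P_1\to P_2\to\operatorname{Coker}f\to 0$ to keep $P_2$ inside $\mathcal{E}$. Everything else reduces to the componentwise description of exact sequences and the characterizations already recorded in Lemma~\ref{hereditary equiv}. I should double-check whether the stated hypothesis $\operatorname{Ext}^i_B(U,\mathcal{F})=0$ for $1\le i\le n$ is needed beyond ensuring the relevant $n$-cotorsion pair machinery is available, or whether it is vestigial here; in the dual part (1), the analogous use of $\operatorname{Tor}^A_i(U,\mathcal{C})=0$ enters when showing injectives/projectives sit in $\mathfrak{A}^\mathcal{D}_\mathcal{F}$ via Lemma~\ref{projective and injective} and Proposition~\ref{prop5}.
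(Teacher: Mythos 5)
Your $(\Rightarrow)$ direction and the closure-under-extensions and closure-under-kernels-of-epimorphisms arguments are correct and match the paper's approach (the paper proves part $(1)$ and says $(2)$ is similar). However, there is a genuine error in the $(\Leftarrow)$ direction where you check that projective $\Lambda$-modules lie in $\mathfrak{A}^\mathcal{C}_\mathcal{E}$.

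You write that ``$U\otimes_A P_1$ is a summand of a free, hence in $\mathcal{E}$.'' This is false: if $P_1$ is a direct summand of $A^{(I)}$, then $U\otimes_A P_1$ is a direct summand of $U\otimes_A A^{(I)}\cong U^{(I)}$, which is a direct sum of copies of $U$ as a $B$-module, not a free (or projective) $B$-module, since $U$ need not be projective over $B$. Consequently $U\otimes_A P_1\in\mathcal{E}$ is not automatic, and this is exactly where the hypothesis $\operatorname{Ext}^i_B(U,\mathcal{F})=0$ for $1\leq i\leq n$ must be invoked — which you flag as possibly ``vestigial'' but never actually use. The correct argument is: by Theorem \ref{n-cotorsion pair2}$(1)$ one has $\mathcal{E}={}^{\perp_{[1,n]}}\mathcal{F}$; the hypothesis says precisely $U\in{}^{\perp_{[1,n]}}\mathcal{F}$, and since ${}^{\perp_{[1,n]}}\mathcal{F}$ is closed under arbitrary direct sums and direct summands (because $\operatorname{Ext}^i_B(\bigoplus_I U,F)\cong\prod_I\operatorname{Ext}^i_B(U,F)$ and $\operatorname{Ext}^i_B(-,F)$ is additive), it follows that $U\otimes_A P_1\in\mathcal{E}$; then closure of $\mathcal{E}$ under extensions applied to $0\to U\otimes_A P_1\to P_2\to\operatorname{Coker}f\to 0$ gives $P_2\in\mathcal{E}$ as you intended. (In the paper's proof of part $(1)$ the analogous step — showing $\operatorname{Hom}_B(U,M_2)\in\mathcal{D}$ for $M_2$ injective — is carried out via the adjunction $\operatorname{Ext}^i_A(C,\operatorname{Hom}_B(U,M_2))\cong\operatorname{Hom}_B(\operatorname{Tor}^A_i(U,C),M_2)$ from \cite[Corollary 10.63]{JJRotman}, using $\operatorname{Tor}^A_i(U,\mathcal{C})=0$; the dual step in part $(2)$ should use the direct computation above rather than Proposition \ref{prop5}, which you cite but which plays no role here.) Once this step is repaired, the rest of your argument goes through.
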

\begin{proof}
(1) ``$\Rightarrow$'' Since $\mathfrak{A}^\mathcal{D}_\mathcal{F}$ is a coresolving class in $\Lambda\operatorname{-Mod}$, then $\mathfrak{A}^\mathcal{D}_\mathcal{F}$ is  closed under cokernels of monomorphisms. This implies that $\mathcal{D}$ and $\mathcal{F}$ are both closed under cokernels of monomorphisms.
It follows that $(\mathcal{C},\mathcal{D})$ and $(\mathcal{E},\mathcal{F})$ are both hereditary right $n$-cotorsion pairs by Lemma \ref{hereditary equiv}(1).

\noindent``$\Leftarrow$'' Since $(\mathcal{C},\mathcal{D})$ and $(\mathcal{E},\mathcal{F})$ are both right $n$-cotorsion pairs, then $\mathcal{D}$ and $\mathcal{F}$ are both closed under extensions. And then $\mathfrak{A}^\mathcal{D}_\mathcal{F}$ is closed under extensions.
Since $(\mathcal{C},\mathcal{D})$ and $(\mathcal{E},\mathcal{F})$ are both hereditary, then $\mathcal{D}$ and $\mathcal{F}$ are both closed under cokernels of monomorphisms by Lemma \ref{hereditary equiv}(1). This implies $\mathfrak{A}^\mathcal{D}_\mathcal{F}$ is closed under cokernels of monomorphisms.
So it remains to show $\Lambda\operatorname{-Inj}\subseteq\mathfrak{A}^\mathcal{D}_\mathcal{F}$.

For any injective module $\begin{pmatrix}
	M_1\\M_2
\end{pmatrix}_f$ in $\Lambda\operatorname{-Mod}$, it meets that $M_2\in B\operatorname{-Inj}\subseteq\mathcal{F}$, $\tilde{f}:M_1\to \operatorname{Hom}_B(U,M_2)$ is an epimorphism and $\operatorname{Ker}\tilde{f}\in A\operatorname{-Inj}\subseteq\mathcal{D}$ by Lemma \ref{projective and injective}.
Consider the short exact sequence:
$$0\to \operatorname{Ker}\tilde{f}\to M_1\to \operatorname{Hom}_B(U,M_2)\to 0.$$
For any $C\in\mathcal{C}$ and $1\leq i\leq n$, we have $\operatorname{Ext}^i_A(C,\operatorname{Hom}_B(U,M_2))\cong\operatorname{Hom}_B(\operatorname{Tor}^A_i(U,C),M_2)=0$ by \cite[Corollary 10.63]{JJRotman}.
So $\operatorname{Hom}_B(U,M_2)\in\bigcap\limits_{i=1}^n C^{\perp_i}=D$.
Since $\mathcal{D}$ is closed under extensions, then $M_1\in\mathcal{D}$.
Hence $\begin{pmatrix}
	M_1\\M_2
\end{pmatrix}_f\in\mathfrak{A}^\mathcal{D}_\mathcal{F}$. This shows $\Lambda\operatorname{-Inj}\subseteq\mathfrak{A}^\mathcal{D}_\mathcal{F}$ as desired.\\
(2) The proof is similar to (1).
\end{proof}

\begin{remark}
Under the condition $\mathfrak{A}^\mathcal{D}_\mathcal{F}$ (resp. $\mathfrak{A}^\mathcal{C}_\mathcal{E}$) is a coresolving class (resp. resolving class) in Theorem \ref{main result1}, we find out that all right (resp. left) $n$-cotorsion pairs in Theorem \ref{main result1} are actually hereditary.
\end{remark}
\begin{theorem}\label{main result2}
\noindent$(1)$ Let $(\mathcal{C},\mathcal{D})$ be a hereditary right $n$-cotorsion pair in $A\operatorname{-Mod}$ and $(\mathcal{E},\mathcal{F})$ be a hereditary right $n$-cotorsion pair in $B\operatorname{-Mod}$.
If $\operatorname{Tor}^A_j(U,\mathcal{C})=0$ for $1\leq j\leq n+1$ and $(\mathfrak{P}^\mathcal{C}_\mathcal{E})^\vee_{n-1}$ is
closed under extensions, then $(\mathfrak{P}^\mathcal{C}_\mathcal{E},\mathfrak{A}^\mathcal{D}_\mathcal{F})$ is a hereditary right $n$-cotorsion pair in $\Lambda\operatorname{-Mod}$.

\noindent$(2)$ Let $(\mathcal{C},\mathcal{D})$ be a hereditary left $n$-cotorsion pair in $A\operatorname{-Mod}$ and $(\mathcal{E},\mathcal{F})$ be a hereditary left $n$-cotorsion pair in $B\operatorname{-Mod}$.
If $\operatorname{Ext}_B^j(U,\mathcal{F})=0$ for $1\leq j\leq n+1$ and $(\mathfrak{I}^\mathcal{D}_\mathcal{F})^\wedge_{n-1}$ is closed under extensions, then $(\mathfrak{A}^\mathcal{C}_\mathcal{E},\mathfrak{I}^\mathcal{D}_\mathcal{F})$ is a hereditary left $n$-cotorsion pair in in $\Lambda\operatorname{-Mod}$.
\end{theorem}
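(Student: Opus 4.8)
The plan is to deduce Theorem \ref{main result2} from the already-established Theorem \ref{main result1} together with the characterization of hereditariness in Lemma \ref{hereditary eq2}. For part (1), the hypotheses of Theorem \ref{main result2}(1) are: $(\mathcal{C},\mathcal{D})$ and $(\mathcal{E},\mathcal{F})$ are hereditary right $n$-cotorsion pairs, $\operatorname{Tor}^A_j(U,\mathcal{C})=0$ for $1\le j\le n+1$, and $(\mathfrak{P}^\mathcal{C}_\mathcal{E})^\vee_{n-1}$ is closed under extensions. To apply Theorem \ref{main result1}(1) I need, in addition, that $\mathfrak{A}^\mathcal{D}_\mathcal{F}$ is a coresolving class in $\Lambda\operatorname{-Mod}$. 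But this is exactly the content of Lemma \ref{hereditary eq2}(1): since $(\mathcal{C},\mathcal{D})$ and $(\mathcal{E},\mathcal{F})$ are both hereditary and $\operatorname{Tor}^A_i(U,\mathcal{C})=0$ for $1\le i\le n$ (which is implied by the stronger range $1\le j\le n+1$), the ``$\Leftarrow$'' direction of Lemma \ref{hereditary eq2}(1) gives that $\mathfrak{A}^\mathcal{D}_\mathcal{F}$ is coresolving. Hence all hypotheses of Theorem \ref{main result1}(1) are met, and $(\mathfrak{P}^\mathcal{C}_\mathcal{E},\mathfrak{A}^\mathcal{D}_\mathcal{F})$ is a right $n$-cotorsion pair in $\Lambda\operatorname{-Mod}$.

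It then remains to upgrade ``right $n$-cotorsion pair'' to ``hereditary right $n$-cotorsion pair'', i.e., to verify $\operatorname{Ext}^{n+1}_\Lambda(\mathfrak{P}^\mathcal{C}_\mathcal{E},\mathfrak{A}^\mathcal{D}_\mathcal{F})=0$. By Lemma \ref{hereditary equiv}(1), for the right $n$-cotorsion pair $(\mathfrak{P}^\mathcal{C}_\mathcal{E},\mathfrak{A}^\mathcal{D}_\mathcal{F})$ this is equivalent to $\mathfrak{A}^\mathcal{D}_\mathcal{F}$ being a coresolving class in $\Lambda\operatorname{-Mod}$ — which we have just established via Lemma \ref{hereditary eq2}(1). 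So the same ingredient delivers both the hypothesis of Theorem \ref{main result1}(1) and the hereditariness conclusion, and part (1) follows. Part (2) is entirely dual: from the hereditary hypotheses on $(\mathcal{C},\mathcal{D})$, $(\mathcal{E},\mathcal{F})$ together with $\operatorname{Ext}^j_B(U,\mathcal{F})=0$ for $1\le j\le n+1$, Lemma \ref{hereditary eq2}(2) shows $\mathfrak{A}^\mathcal{C}_\mathcal{E}$ is a resolving class in $\Lambda\operatorname{-Mod}$; then Theorem \ref{main result1}(2) gives that $(\mathfrak{A}^\mathcal{C}_\mathcal{E},\mathfrak{I}^\mathcal{D}_\mathcal{F})$ is a left $n$-cotorsion pair, and Lemma \ref{hereditary equiv}(2) (``$\mathcal{C}$ resolving $\Rightarrow$ hereditary'') finishes the argument.

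In fact the statement recorded in the Remark preceding Theorem \ref{main result2} — that under the coresolving/resolving hypothesis of Theorem \ref{main result1} the resulting $n$-cotorsion pairs are automatically hereditary — already encapsulates the last step, so the whole proof amounts to: check the Tor/Ext vanishing ranges match, invoke Lemma \ref{hereditary eq2} to get the coresolving (resp. resolving) property from hereditariness of the base pairs, invoke Theorem \ref{main result1}, and invoke Lemma \ref{hereditary equiv} (or the Remark) for hereditariness. I do not anticipate a genuine obstacle here; the only points requiring a moment's care are purely bookkeeping: confirming that the hypothesis ``$\operatorname{Tor}^A_j(U,\mathcal{C})=0$ for $1\le j\le n+1$'' indeed subsumes the ``$1\le i\le n$'' needed in Lemma \ref{hereditary eq2}(1), and symmetrically for the Ext condition in part (2). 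Everything else is a direct citation of the results proved above.
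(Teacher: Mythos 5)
Your proposal is correct and follows exactly the same route as the paper's own proof, which simply cites Theorem \ref{main result1}, Lemma \ref{hereditary equiv} and Lemma \ref{hereditary eq2}; you have merely spelled out how those three ingredients combine. The bookkeeping you flag (the Tor/Ext vanishing range $1\le j\le n+1$ subsuming the $1\le i\le n$ needed in Lemma \ref{hereditary eq2}) is indeed the only point requiring care, and it checks out.
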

\begin{proof}
By Theorem \ref{main result1}, Lemma \ref{hereditary equiv} and Lemma \ref{hereditary eq2}, we can obtain the results.
\end{proof}

Given a ring $R$, we denote by $R\operatorname{-mod}$
the category of finitely generated left $R$-modules and $R\operatorname{-proj}$ (resp. $R\operatorname{-inj}$) the class of finitely generated projective (resp. injective) left $R$-modules.
\begin{remark}\label{remark}
Suppose $A$, $B$ are finite dimensional algebras over an algebraically closed field $K$, and $U$ is a $(B,A)$-bimodule such that ${}_BU$ and $U_A$ are finitely generated, then the triangular matrix ring $\Lambda$ is also a finite dimensional algebra over $K$.
Further, keeping the assumption above, if we restrict all module categories ($A\operatorname{-Mod}$, $B\operatorname{-Mod}$ and $\Lambda\operatorname{-Mod}$) to finitely generated module categories ($A\operatorname{-mod}$, $B\operatorname{-mod}$ and $\Lambda\operatorname{-mod}$), it is easy to see that the main result Theorem \ref{main result2} still holds.
\end{remark}

\begin{example}\label{example}\rm
Let $Q$ be the quiver $3\to 2 \to 1$ and $\Lambda=KQ$. Then there exists a $K$-algebra isomorphism
$\Lambda\cong\begin{pmatrix}
   K & 0 & 0 \\
   K & K & 0 \\
   K & K & K
\end{pmatrix}$. So $\Lambda$ can be seen as a formal triangular matrix ring $\begin{pmatrix}
A & 0\\
{}_BU_A & B
\end{pmatrix}$, where $A=\begin{pmatrix}
K & 0\\
K & K
\end{pmatrix}$, $B=K$ and ${}_BU_A=(K,K)$ with the natural module structure.
And it is easy to see $U$ is a projective left $B$-module and projective right $A$-module.
The AR-quiver of $\Lambda\operatorname{-mod}$ can be represented as follows:
$$\xymatrix@1{ & & {(\begin{smallmatrix}
		P_1\\K
\end{smallmatrix})}\ar[rd] &  &  \\
    &  {(\begin{smallmatrix}
    	P_2\\K
    \end{smallmatrix})}\ar[ru]\ar[rd]  &   & {(\begin{smallmatrix}
    P_1\\0
\end{smallmatrix})}\ar@{-->}[ll]\ar[rd]   & \\
{(\begin{smallmatrix}
	0\\K
\end{smallmatrix})}\ar[ru] &  & {(\begin{smallmatrix}
P_2\\0
\end{smallmatrix})}\ar@{-->}[ll]\ar[ru] &  & {(\begin{smallmatrix}
S_1\\0
\end{smallmatrix})}\ar@{-->}[ll]
}$$

$(a)$ Let $(\mathcal{C},\mathcal{D})=(A\operatorname{-inj},A\operatorname{-inj})$ and $(\mathcal{E},\mathcal{F})=(B\operatorname{-mod},B\operatorname{-mod}) $.
It is easy to check that $(\mathcal{C},\mathcal{D})$ and $(\mathcal{E},\mathcal{F})$ are both hereditary right 2-cotorsion pairs.
By the constructions in Definition \ref{construction}, we can calculate that $\mathfrak{P}^\mathcal{C}_\mathcal{E}=\operatorname{add}\{(\begin{smallmatrix}
		0\\K
	\end{smallmatrix}),(\begin{smallmatrix}
	P_1\\K
	\end{smallmatrix}),(\begin{smallmatrix}
	S_1\\0
	\end{smallmatrix})\}$
and $\mathfrak{A}^\mathcal{D}_\mathcal{F}=\operatorname{add}\{(\begin{smallmatrix}
	0\\K
\end{smallmatrix}),(\begin{smallmatrix}
	P_1\\K
\end{smallmatrix}),(\begin{smallmatrix}
P_1\\0
\end{smallmatrix}),(\begin{smallmatrix}
	S_1\\0
\end{smallmatrix})\}$.
And $(\mathfrak{P}^\mathcal{C}_\mathcal{E})^\vee_1=\operatorname{add}\{(\begin{smallmatrix}
	0\\K
\end{smallmatrix}),(\begin{smallmatrix}
P_2\\K
\end{smallmatrix}),(\begin{smallmatrix}
	P_1\\K
\end{smallmatrix}),(\begin{smallmatrix}
	S_1\\0
\end{smallmatrix})\}$.
It is easy to check $(\mathfrak{P}^\mathcal{C}_\mathcal{E})^\vee_1$ is closed under extensions.
Since $U_A$ is projective as right $A$-module, then it satisfies that $\operatorname{Tor}^A_i(U,\mathcal{C})=0$ for $1\leq i\leq 3$.
By Theorem \ref{main result2} and Remark \ref{remark}, $(\mathfrak{P}^\mathcal{C}_\mathcal{E},\mathfrak{A}^\mathcal{D}_\mathcal{F})$ is a hereditary right 2-cotorsion pair in $\Lambda\operatorname{-mod}$.

$(b)$ Let $(\mathcal{C},\mathcal{D})=(A\operatorname{-proj},A\operatorname{-proj})$ and $(\mathcal{E},\mathcal{F})=(B\operatorname{-mod},B\operatorname{-mod})$.
It is clear that $(\mathcal{C},\mathcal{D})$ and $(\mathcal{E},\mathcal{F})$ are both hereditary left 2-cotorsion pairs.
By the constructions in Definition \ref{construction}, we can calculate that
$\mathfrak{A}^\mathcal{C}_\mathcal{E}=\operatorname{add}\{(\begin{smallmatrix}
	0\\K
\end{smallmatrix}),(\begin{smallmatrix}
	P_2\\K
\end{smallmatrix}),(\begin{smallmatrix}
P_2\\0
\end{smallmatrix}),(\begin{smallmatrix}
	P_1\\K
\end{smallmatrix}),(\begin{smallmatrix}
	P_1\\0
\end{smallmatrix})\}$ and $\mathfrak{I}^\mathcal{D}_\mathcal{F}=\operatorname{add}\{(\begin{smallmatrix}
P_1\\K
\end{smallmatrix}),(\begin{smallmatrix}
P_1\\0
\end{smallmatrix}),(\begin{smallmatrix}
P_2\\0
\end{smallmatrix})\}$.
And $(\mathfrak{I}^\mathcal{D}_\mathcal{F})^\wedge_1=\operatorname{add}\{(\begin{smallmatrix}
	P_1\\K
\end{smallmatrix}),(\begin{smallmatrix}
	P_1\\0
\end{smallmatrix}),(\begin{smallmatrix}
	P_2\\0
\end{smallmatrix}),(\begin{smallmatrix}
S_1\\0
\end{smallmatrix})$.
It is easy to check $(\mathfrak{I}^\mathcal{D}_\mathcal{F})^\wedge_1$ is closed under extensions.
Since ${}_BU$ is projective as left $B$-module, then it satisfies that $\operatorname{Ext}^i_B(U,\mathcal{F})=0$ for $1\leq i\leq 3$.
By Theorem \ref{main result2} and Remark \ref{remark}, $(\mathfrak{A}^\mathcal{C}_\mathcal{E},\mathfrak{I}^\mathcal{D}_\mathcal{F})$ is a hereditary left 2-cotorsion pair in $\Lambda\operatorname{-mod}$.
\end{example}

	\section*{Acknowledgments}
	This work is supported by the National Natural Science Foundation of China
	(Grant No. 11871145), the Jiangsu Provincial Scientific Research Center of Applied Mathematics (No. BK20233002) and the Qing Lan Project of Jiangsu Province, China.\\

	
\end{document}